\documentclass{amsart}

\newtheorem{theorem}{Theorem}[section]
\newtheorem{lemma}[theorem]{Lemma}
\newtheorem{remark}[theorem]{Remark}
\newtheorem{proposition}[theorem]{Proposition}

\newtheorem{example}[theorem]{Example}

\usepackage{amsmath, amsthm, amssymb}
\usepackage{dsfont}
\usepackage[
  backend = biber,
  style = numeric,
  sorting = nyt,
  giveninits = true,
  maxbibnames = 99,
  maxcitenames = 2
]{biblatex}
\usepackage{hyperref}
\usepackage{enumerate}
\usepackage{tikz}
\usepackage[skip = 0.5\baselineskip]{caption}

\DefineBibliographyStrings{english}{
  page = {},
  pages = {},
  in = {}
}
\DeclareFieldFormat[article]{volume}{\mkbibbold{#1}}
\DeclareFieldFormat[article]{number}{\mkbibparens{#1}}

\renewbibmacro*{volume+number+eid}{%
  \printfield{volume}%
  \setunit{\addspace}%
  \printfield{number}%
  \setunit{\addcomma\space}%
  \printfield{eid}}

\newcommand{\cemph}[1]{\emph{\color{red}#1}}

\newcommand\R{\mathbb{R}}
\newcommand\B{\mathbb{B}}

\newcommand\CK{\mathcal{K}}
\newcommand\CE{\mathcal{E}}

\newcommand\bone{\mathds{1}}
\newcommand{\dd}{\,\mathrm{d}}

\DeclareMathOperator{\conv}{conv}
\DeclareMathOperator{\pos}{pos}
\DeclareMathOperator{\aff}{aff}
\DeclareMathOperator{\lin}{lin}
\DeclareMathOperator{\inte}{int}
\DeclareMathOperator{\bd}{bd}
\DeclareMathOperator{\vol}{vol}
\DeclareMathOperator{\argmax}{argmax}

\newcommand{\diff}[3]{D_{#1}^{#2} #3}
\newcommand{\diag}[2]{\Delta^{#1} #2}
\newcommand{\lyz}[4]{C_{#3} (#4)}
\newcommand{\interval}[4]{I_{#3} (#4)}
\newcommand{\sumfunc}[5]{\sigma_{#3}(#4,#5)}

\newcommand{\simplexscale}[2]{#1(#2)}
\newcommand{\simplexscaling}[2]{\mu_{#1}(#2)}

\newcommand{\simplex}[1]{\mathcal{S}^{#1}}

\newcommand{\embedding}[2]{#1 \times \{#2\}}
\newcommand{\coneembedding}[3]{\conv((\embedding{#1}{#2}) \cup \{#3\})}

\newcommand{\cartesian}[2]{#1^{\times #2}}

\title[Tight stability estimates near the simplex]{Tight Stability Estimates Near the Simplex and Applications for the Banach--Mazur Distance and Rogers--Shephard-Type Inequalities}

\author[R. Brandenberg]{Ren\'e Brandenberg}
\author[B. Gonz\'alez Merino]{Bernardo Gonz\'alez Merino}
\author[F. Grundbacher]{Florian Grundbacher}
\keywords{Banach--Mazur distance, Stability, Minkowski asymmetry, Loewner John ellipsoid, Rogers--Shephard inequality}
\subjclass[2020]{Primary 52A40; Secondary 52A21}
\date{September 14, 2026}

\begin{document}

\begin{abstract}
We establish a tight stability estimate for the Minkowski asymmetry near its maximal value, improving earlier results in both the range for the admissible error and the strength of the estimate. More precisely, if an $n$-dimensional convex body $K$ has Minkowski asymmetry $s(K) \geq n-\varepsilon$ for $\varepsilon \in [0,1)$, then its Banach--Mazur distance to the $n$-simplex is at most
\[ 1 + \varepsilon + \frac{\varepsilon^2}{2(1-\varepsilon)}. \]
This dimension-independent estimate is sharp to the linear order in $\varepsilon$, including the constant.

We apply this estimate to several problems. First, we prove a stability result for the maximal Banach--Mazur distance to the Euclidean ball, improving a previous estimate to the optimal linear order. As a key ingredient, we verify the conjecture that every convex body $K$ contains a translated copy of its volume-minimal circumscribed ellipsoid scaled down by a factor $\sqrt{n s(K)}$.
Second, we prove a sharp common generalization of Schneider's higher-order Rogers--Shephard inequality and the $L_p$-Rogers--Shephard inequality, and establish a stability result of the optimal linear order. These results are based on the recent positive answer to the inequality part of the higher-order Godbersen conjecture and the accompanying proof of the $L_p$-Rogers--Shephard inequality. Finally, we improve upper bounds for the diameter of the Banach--Mazur compactum in fixed dimensions.
\end{abstract}

\maketitle

\section{Introduction}

Stability results are the natural next step in understanding the behavior of some geometric inequality at its extremal value once a characterization of its equality case is known.
Many important stability results have been established over the years,
see, e.g., \cite{2011:BallBöröczky,2005:Böröczky,2010:Böröczky,2026:BöröczkyPatsalosSaroglou,2025:FigalliHintumTiba,2010:FigalliMaggiPratelli,2008:FuscoMaggiPratelli}.
We draw special attention to some of the results where simplices uniquely reach the extremal value of an inequality:
B\"or\"oczky \cite{2005:Böröczky} obtained the stability of the Rogers--Shephard inequality and the Zhang projection inequality,
Kobos \cite{2019:Kobos} provided a stability estimate for the upper bound on the Banach--Mazur distance to smooth symmetric convex bodies, 
Schneider \cite{2009:Schneider} proved stability results for the Bohnenblust and Leichtweiss inequalities,
and Lucardesi and Zucco \cite{2025:LucardesiZucco} recently showed stability versions of P\'al's inequality.

A crucial ingredient common to several of the explicitly mentioned results is the stability of the Minkowski asymmetry near the simplex.
Denoting by $\CK^n := \{ K \subset \R^n : K$ compact, convex, $ \inte(K) \neq \emptyset \}$ the family of \cemph{(convex) bodies},
let us recall that the \cemph{Minkowski asymmetry} $s(K)$ of $K \in \CK^n$
is the smallest positive factor by which $K$ can be rescaled while still containing a translation of $-K$.
It is well-known that $s(K) \leq n$, with equality if and only if $K$ is an $n$-dimensional simplex (see \cite{1963:Grunbaum}),
which we denote as $\simplex{n}$ if we do not want to specify any particular simplex.
Several stability estimates have been established for the simplex as the unique maximizer in this context, see \cite{2005:Böröczky,2005:Guo,2009:Schneider}.
For almost two decades, the strongest of them was given by Schneider
in the following form:
If $K \in \CK^n$ is a convex body with
$s(K) \geq n - \varepsilon$ for some $\varepsilon \in [0,\frac{1}{n})$,
then
\begin{equation}\label{eq:Schneider}
	d_{BM}(K,\simplex{n})
	\leq 1 + \frac{n+1}{1-n\varepsilon} \cdot \varepsilon = 1 + (n+1) \varepsilon + \sum_{k = 2}^\infty (n+1) n^{k-1} \varepsilon^k.
\end{equation}
Above,
$d_{BM}(K,L)$ denotes the Banach--Mazur distance between two bodies $K, L \in \CK^n$ (see below).

Our first main result is an improvement of \eqref{eq:Schneider} that entirely removes the dependence on the dimension $n$,
meaning the allowable error $\varepsilon$ is increased from $\frac{1}{n}$ to constant size $1$ and the upper bound becomes free of $n$ as well.
We additionally show that the improved upper bound is optimal up to the linear order.

\begin{theorem}\label{thm:main}
Let $K \in \CK^n$ with $s(K) \geq n-\varepsilon$ for some $\varepsilon \in (0,1)$.
Then,
\[
    d_{BM}(K,\simplex{n})
    \leq 1 + \varepsilon + \frac{\varepsilon^2}{2(1-\varepsilon)}
    = 1 + \varepsilon + \sum_{k = 2}^\infty \frac{\varepsilon^k}{2}.
\]
Further, for all $n \geq 2$ and $\varepsilon \in [0,1]$,
there exists $K \in \CK^n$ with $s(K) = n - \varepsilon$ and $d_{BM}(K,\simplex{n}) = 1 + \varepsilon$.
\end{theorem}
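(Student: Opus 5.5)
We follow the standard route through an optimal containment configuration. After a translation we may assume $-K \subset s(K) K$ optimally, with $s := s(K) \geq n-\varepsilon$. By the known characterization of optimal Minkowski containment, there are touching points $p^1,\dots,p^{m} \in \bd(K)$ with $m \leq n+1$. The points $-p^i$ lie on $\bd(sK)$, and there are outer normals $a^i$ of $sK$ at $-p^i$ with $0 \in \conv\{a^1,\dots,a^m\}$. After normalizing the support values, this gives
\[
K \subset P := \{x : \langle a^i, x\rangle \leq 1\}, \qquad -p^i \in sK,\quad \langle a^i,-p^i\rangle = s,
\]
so that $\langle a^i, p^i\rangle = -s$. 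If $m<n+1$, then $P$ contains a line and $s(K)\le n-1$, which is excluded since $\varepsilon<1$. Hence $m=n+1$ and $P$ is a simplex, with barycentric-type weights $\lambda_i>0$, $\sum_i\lambda_i a^i=0$.

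The plan is then to show that $K$ is sandwiched between $P$ and a homothetic copy of $P$ scaled by roughly $1/(1+\varepsilon)$. Write $P$ in barycentric coordinates so that it becomes the standard simplex $T=\{x: \sum x_i = 1, x_i\ge0\}$ with $0$ mapped to the centroid-type point determined by the $\lambda_i$. The facet conditions $\langle a^i,x\rangle\le 1$ correspond to $x_i \ge 0$. The condition $\langle a^i,p^i\rangle=-s$ says that $p^i$ has $i$-th coordinate very negative relative to the center, which in $T$ means that $p^i$ is close to the opposite vertex $v^i$ of the simplex. Since $s \ge n-\varepsilon$ while a vertex would give exactly $n$ in the regular normalization, each $p^i$ lies in the cap of $T$ near $v^i$ cut off by a hyperplane parallel to the facet opposite $v^i$, at relative depth about $\varepsilon/n$ or so. First I will verify that the $\lambda_i$ must be nearly equal; this should follow from summing the constraints $\langle a^i,p^i\rangle=-s$ and using $p^i\in P$. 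Then $K \supset \conv\{p^1,\dots,p^{n+1}\}=:Q$, and $Q\subset K\subset P$.

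The main step, and the expected obstacle, is bounding $d_{BM}(Q',P)$ sharply, where $Q'$ is a suitably chosen simplex inside $K$, so as to get $1+\varepsilon+\varepsilon^2/(2(1-\varepsilon))$ rather than the $(n+1)\varepsilon$ of Schneider. Using $Q$ directly costs a factor depending on $n$, because $n+1$ points each losing $\varepsilon/n$ accumulate. Instead I would compare $K$ with a simplex $S$ bounded by the facet hyperplanes of $P$ shifted inward, $S=\{x: x_i\ge t_i\}$, and check via convexity that $S \subset K$ fails only in controlled ways. More concretely, I would optimize over homotheties $c+\rho P\subset K$, trying to show that $\rho \ge 1/(1+\varepsilon+\dots)$. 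The candidate certificate is the simplex $\conv\{p^i\}$ together with the total deficiency $\sum_i(\text{depth of } p^i)\le \varepsilon$, which is derived from $\sum \lambda_i \langle a^i,p^i\rangle=-s\sum\lambda_i$. One then needs a lemma: if the vertices of a simplex $Q$ lie in the corner caps of $T$ with total relative depth $\delta$, then $T$ shrunk by $1/(1+\delta+\delta^2/(2(1-\delta)))$ translates into $Q$. I would prove this lemma by writing $Q$'s vertex matrix as $I-E$ with $E$ small and column sums bounded by $\delta$, and estimating the inradius-type quantity through a Neumann-series expansion. The quadratic term is where $\varepsilon^2/2$ should arise, and getting exactly that constant is the hard part.

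For sharpness, I would take $K=\conv(T\cup\{-\tfrac{\varepsilon}{n}\,\cdot\})$-type bodies: truncate one vertex of a regular simplex, or equivalently add a point beyond one facet. Then I would compute $s(K)=n-\varepsilon$ directly from the optimal containment conditions above, and $d_{BM}=1+\varepsilon$ from the same homothety computation with a single cap of depth $\varepsilon$, where the estimate becomes an equality.
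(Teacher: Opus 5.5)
\textbf{There is a genuine gap, in exactly the step you call the main one.} Your reference simplex is the circumscribed simplex $P$ cut out by the common supporting hyperplanes. You then try to fit a homothet $c+\rho P$ into $Q=\conv\{p^1,\dots,p^{n+1}\}$ (or into $K$). This route is capped at $\frac{1}{1-\varepsilon}=1+\varepsilon+\frac{\varepsilon^2}{1-\varepsilon}$, so it cannot produce the constant $\frac{1}{2}$ in the quadratic term.

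Your lemma already fails for $n=2$ with a single cap. In barycentric coordinates let $T=\conv\{e_1,e_2,e_3\}$ and $Q=\conv\{(1-\delta)e_1+\delta e_3,e_2,e_3\}$, so the total depth is $\delta$. Then $Q\subset\{x_1\le 1-\delta\}$, while $x_1$ ranges over an interval of length $\rho$ on every homothet $c+\rho T$. Hence $\rho\le 1-\delta$, and you only get $1/(1-\delta)$.

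This is not an artifact of passing from $K$ to $Q$. Take the trapezoid $K=T\cap\{x_1\le 1-\varepsilon\}$.
\begin{itemize}
\item A direct computation gives $s(K)=2-\varepsilon$.
\item The reflected copy $c_K+\frac{1}{s(K)}(c_K-K)$ touches $\bd(K)$ only in relative interiors of edges of $K$ lying on the three edges of $T$. So $P=T$ is forced.
\item The contact points are $e_2$, $e_3$ and points of the top edge. Thus $Q\subset\{x_1\le 1-\varepsilon\}$; for example, $Q$ is as above with $\delta=\varepsilon$.
\item Since $K$ itself lies in $\{x_1\le 1-\varepsilon\}$, no homothet of $P$ larger than $(1-\varepsilon)P$ fits into $K$.
\end{itemize}
So every sandwich $c+\rho P\subset K\subset P$ gives at best $\frac{1}{1-\varepsilon}$. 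That is the weaker bound from circumscribed-simplex arguments such as Bakaev--Yehudayoff. Yet here $d_{BM}(K,\simplex{2})=1+\varepsilon$, attained by the inner triangle $\conv\{e_2,e_3,(1-\varepsilon)e_1+\varepsilon e_3\}$. In particular, your claim that the single-cap case makes your estimate an equality at $1+\varepsilon$ contradicts your own setup.

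\textbf{The missing idea is to reverse the roles.} Take the inner contact simplex $T=\conv\{v^i\}$ (your $Q$) as the reference and bound $R(K,T)$ from above. That requires an outer set much smaller than $P$. The paper obtains it from the second family of boundary points $w^i=\frac{s+1}{s}c_K-\frac{1}{s}v^i$:
\begin{itemize}
\item Since $\inte(T)\subset\inte(K)$, $K$ misses the open cones $w^i-\inte(\pos(T-w^i))$.
\item By the reflection $x\mapsto\frac{s+1}{s}c_K-\frac{1}{s}x$, $K$ also misses $v^i+\inte(\pos(T-w^i))$.
\item Converting this exclusion description, via the simplices $T^i$ with their spikes cut off, yields $K\subset\conv\{v^i,q^i_j\}$.
\item The exact formula $R(X,T)=\frac{1}{n+1}\sum_i h_X(a^i)$ then gives $1+\varepsilon+\frac{2}{1-\varepsilon}\lambda_{m_1}(\varepsilon-\lambda_{m_1})$. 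This is maximized at $\lambda_{m_1}=\frac{\varepsilon}{2}$, which is where the $\frac{1}{2}$ comes from.
\end{itemize}

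\textbf{The sharpness part has a second gap.} A homothety computation with one particular simplex only bounds $d_{BM}(K,\simplex{n})$ from above. You need $d_{BM}(K,\simplex{n})\ge 1+\varepsilon$ against all simplices, and the generic estimate $d_{BM}(K,\simplex{n})\ge n/s(K)$ is far too weak for that. The paper gets the lower bound from the exact formula $d_{BM}(K,\simplex{2})=3-s(K)$ for trapezoids. It then lifts by iterated cones, which raise $s$ by one and preserve the distance when it is at most $2$. Truncated simplices in $n\ge 3$ are not cones, so you would have to prove that lower bound separately.
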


The basis for the improvements compared to \eqref{eq:Schneider} is two-fold:
When a convex body $K$ is scaled by $s(K)$, then a translated copy of $-K$ inside must share a number of its boundary points.
Starting from structural observations due to Klee \cite{1953:Klee} and Böröczky \cite{2005:Böröczky}, we show how a simplex formed from such contact points can be used to obtain a sufficiently small superset of $K$ constructed from certain $(n+1)^2$ points, 
namely the $n+1$ vertices of the simplex and $n$ additional points near each vertex.
Compared to previous, comparable approaches based on supersets of $K$ \cite{2005:Böröczky},
we have more global control over our constructed superset,
which is the main reason behind the increase of the range for $\varepsilon$ from $[0,\frac{1}{n})$ to $[0,1)$.
The second major difference in our approach compared to previously published methods is that we take advantage of arbitrary copies of the approximating simplices instead of insisting on specific translations.
While resulting in more technical challenges, this change is crucial for removing the dimension dependence also from the estimate.

We provide Example~\ref{ex:asymmetry_simplex_tightness} showing that the simplices we construct can in general not lead to a better upper bound on the Banach--Mazur distance to the simplex than the estimate in Theorem~\ref{thm:main}.
Accordingly, further improvements require other methods.

Let us also point out that a slightly weaker estimate than the one in
Theorem~\ref{thm:main} was obtained very recently by Bakaev and Yehudayoff in the preprint \cite{2026:BakaevYehudayoff}.
It provides the same range of $\varepsilon \in [0,1)$,
though the upper bound is worse by a factor $2$ in the constant for all orders in $\varepsilon$ starting from the quadratic order.
The approach in \cite{2026:BakaevYehudayoff} is fundamentally different from ours, focusing on simplices circumscribed about $K$ (see Section~\ref{sec:StabilityNearSimplex} for some details).

Theorem~\ref{thm:main} should be viewed as a general stability principle.
Whenever the Minkowski asymmetry can be introduced to refine a sharp inequality with simplices as the unique extremal bodies,
Theorem~\ref{thm:main} immediately converts this into a stability result near the simplex.
Alternatively, it can help improve absolute bounds in situations where the space of convex bodies is parametrized by the Minkowski asymmetry.
Examples where the previously known, weaker versions of the theorem are applied in such situations can be found in \cite{2005:Böröczky,2023:BrandenbergDichterGonzalezMerino,2026:CaDiGoGrJaRu,2021:LianWu,2009:Schneider}.
The often rather straightforward ways in which substantial improvements can be obtained indicate that the stability for the Minkowski asymmetry near the simplex has so far been underutilized in this context,
especially now that Theorem~\ref{thm:main} removes the dependence on the dimension from the estimate.

We provide three
further examples of open problems where Theorem~\ref{thm:main} can be involved advantageously.
The first one concerns the Banach--Mazur distance to the Euclidean ball.
A well-known consequence of the John ellipsoid theorem \cite{1948:John} is that every convex body $K \in \CK^n$ satisfies $d_{BM}(K,\B_2^n) \leq n$,
where $\B_2^n$ is the \cemph{Euclidean unit ball} in $\R^n$,
and that this bound may be reduced to $\sqrt{n}$ when $K$ is symmetric.
Given the fundamental role that these estimates play
for the theory of the Banach--Mazur distance and the local theory of Banach spaces,
it comes to no surprise that also their equality cases have
received a lot of attention.
It was first shown by Leichtweiss \cite{1959:Leichtweiss} that only simplices are at Banach--Mazur distance $n$ from the Euclidean ball,
which was later rediscovered in \cite{1992:Palmon}.
An alternative proof
in the more general context of the Gr\"unbaum distance was given in \cite{2011:JimenezNaszodi},
and a different one from comparing radii functionals under affine transformations in \cite{2025:BrandenbergGrundbacher}.
In the symmetric case, it is known (cf.~\cite{2026:GrundbacherKobos}) that the square, or the cube and the cross-polytope are the only distance maximizers in dimensions $n = 2, 3$, respectively, including a stability result for $n = 2$.
In contrast, there are infinitely many distance maximizers for $n \geq 4$, with no clear description of all maximizers available, which complicates obtaining corresponding stability results.
Still, at least a
necessary condition for symmetric convex bodies giving the extremal distance has been obtained by Milman and Wolfson in \cite{1978:MilmanWolfson}.

The literature's apparent lack of stability results in the general case, where the uniqueness of the simplex as extremal body should make them theoretically achievable,
has been to our knowledge
first mentioned by Hug and Schneider \cite{2007:HugSchneider}.
While aiming for estimates in this direction,
they settled for a somewhat weaker result.
Instead of showing that $K$ must be close to the simplex under the assumption $d_{BM}(K,\B_2^n) \geq n - \varepsilon$,
they made the stronger assumption that the so-called \emph{volume quotient} of the volume-extremal ellipsoids approximating $K$ is at least $n - \varepsilon$.
When $\varepsilon \in [0,\varepsilon_0(n))$ for some explicit $\varepsilon_0(n) = O(n^{-19})$, they obtained
\begin{equation}\label{eq:HugSchneider}
    d_{BM}(K,\simplex{n})
    \leq 1 + c_0(n) \varepsilon^{\frac{1}{4}}
\end{equation}
with an explicit $c_0(n) = O(n^{\frac{27}{4}})$.
The first stability estimate for the maximal Banach--Mazur distance to the Euclidean ball
was established by Kobos \cite{2019:Kobos} around a decade later,
showing that any convex body $K \in \CK^n$
with $d_{BM}(K,\B_2^n) \geq n - \varepsilon$ for $\varepsilon \in [0,\varepsilon_0(n))$ satisfies
\begin{equation}\label{eq:Kobos}
    d_{BM}(K,\simplex{n})
    \leq 1 + c_0(n) \varepsilon^{\frac{1}{3}},
\end{equation}
with $\varepsilon_0(n) = O(n^{-8})$ and $c_0(n) = O(n^{\frac{8}{3}})$.
However, \cite{2019:Kobos} left open what the optimal order in $\varepsilon$ for \eqref{eq:Kobos} (or even \eqref{eq:HugSchneider}) should be.

Our second main result improves \eqref{eq:HugSchneider} and \eqref{eq:Kobos} to the level of Theorem~\ref{thm:main},
achieving the optimal linear order and again constants independent of the dimension.

\begin{theorem}\label{thm:DistanceBall}
Let $K\in\CK^n$ with $d_{BM}(K,\B_2^n) \geq n - \varepsilon$ for some $\varepsilon \in (0, \frac{1}{2})$.
Then
\[
    d_{BM}(K,\simplex{n})
    < 1 + 2 \varepsilon + \frac{2 \varepsilon^2}{1-2 \varepsilon}
    = 1 + 2 \varepsilon + \sum_{k = 2}^\infty 2^{k-1} \varepsilon^k.
\]
The resulting linear order of the estimate in $\varepsilon \leq \gamma$ for any $\gamma < \frac{1}{2}$ is optimal.
\end{theorem}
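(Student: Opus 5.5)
The plan is to reduce Theorem~\ref{thm:DistanceBall} to Theorem~\ref{thm:main} by converting a lower bound on $d_{BM}(K,\B_2^n)$ into a lower bound on $s(K)$. The bridge is the conjecture announced in the abstract: every $K \in \CK^n$ contains a translate of its volume-minimal circumscribed (Loewner) ellipsoid $E$, scaled down by $\sqrt{n s(K)}$. Granting this, after translating and applying a linear map so that $E=\B_2^n$, we get $c + \frac{1}{\sqrt{n s(K)}}\B_2^n \subset K \subset \B_2^n$. Hence
\[
d_{BM}(K,\B_2^n) \leq \sqrt{n s(K)}, \qquad\text{so}\qquad s(K) \geq \frac{(n-\varepsilon)^2}{n} = n - 2\varepsilon + \frac{\varepsilon^2}{n} > n-2\varepsilon .
\]
Applying Theorem~\ref{thm:main} with $2\varepsilon \in (0,1)$ in place of $\varepsilon$ yields
\[
d_{BM}(K,\simplex{n}) \leq 1 + 2\varepsilon + \frac{4\varepsilon^2}{2(1-2\varepsilon)},
\]
which is exactly the claimed bound. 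Strictness comes from $s(K) > n-2\varepsilon$ together with the monotonicity of the bound in the error parameter: apply Theorem~\ref{thm:main} with $\varepsilon' = n - s(K) < 2\varepsilon$.

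The main work, and the main obstacle, is proving the containment $c + \frac{1}{\sqrt{n s(K)}}E \subset K$. My approach uses John's characterization of the Loewner ellipsoid $E = \B_2^n$. There are contact points $u_i \in \bd K \cap S^{n-1}$ and weights $\lambda_i>0$ with $\sum \lambda_i u_i = 0$ and $\sum \lambda_i u_i u_i^T = I_n$; the hyperplanes $\{x : \langle x,u_i\rangle = 1\}$ support $K$, so $K \subset \{x : \langle x,u_i\rangle \le 1\}$. Now let $-K + t \subset s(K) K$, written with $s = s(K)$. The centroid-like point $c$ should be chosen as the Minkowski center, i.e.\ rescale so that $-(K-c) \subset s(K-c)$. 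To show $c + r\B_2^n \subset K$ with $r = 1/\sqrt{ns}$, I argue by contradiction: otherwise some supporting halfspace $\{\langle x,v\rangle \le h\}$ of $K$ has $h - \langle c,v\rangle < r$ for a unit vector $v$. Since $u_i \in K$, we have $\langle u_i - c, v\rangle \le h - \langle c,v\rangle$. Symmetrically, from $-(K-c) \subset s(K-c)$ we get $\langle u_i - c, v\rangle \ge -s(h-\langle c,v\rangle)$. Thus all $\langle u_i,v\rangle$ lie in an interval of length at most $(1+s)(h-\langle c,v\rangle)$ around $\langle c,v\rangle$.

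Combining with $\sum \lambda_i \langle u_i,v\rangle^2 = 1$, $\sum \lambda_i = n$, and $\sum\lambda_i \langle u_i,v\rangle = 0$, a variance argument applies: the mean of $\langle u_i,v\rangle$ is zero and its values lie in $[a-sw,\,a+w]$, where $a = \langle c,v\rangle$ and $w = h-a$. The maximal second moment of a mean-zero distribution supported in $[\alpha,\beta]$ is $-\alpha\beta$. This gives $1/n \leq (sw - a)(a+w)$, maximized over $a$ at $a = (s-1)w/2$, yielding $1/n \le (s+1)^2w^2/4$. That only gives $w \geq \frac{2}{(s+1)\sqrt n}$, which is too weak. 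So I would refine the argument by also using the symmetric condition on $-K$ relative to the contact points, or by choosing $c$ more cleverly (for instance $c = 0$ as forced by $\sum\lambda_i u_i = 0$ combined with the containment of $-\frac1s K$ in $K$ up to translation). Getting exactly $\sqrt{ns}$ is the delicate step.

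For optimality of the linear order, I take the example from Theorem~\ref{thm:main} (a slightly truncated simplex) and compute its distance to the ball. Presumably $d_{BM}(K,\B_2^n) \geq n - c\,\delta$ when $d_{BM}(K,\simplex{n}) = 1+\delta$, obtained by bounding the ball distance below via volume ratios or the triangle inequality $d_{BM}(K,\B_2^n) \ge n/d_{BM}(K,\simplex{n})$. Since $n/(1+\delta) \approx n - n\delta$, that only shows order linear with an $n$-dependent constant; optimality of the order in $\varepsilon$ is all that is claimed, so linear dependence suffices.
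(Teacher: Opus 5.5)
Your reduction, your strictness argument and your optimality argument via $n = d_{BM}(\B_2^n,\simplex{n}) \le d_{BM}(K,\B_2^n)\, d_{BM}(K,\simplex{n})$ all coincide with the paper's proof. The genuine gap is the bridge $d_{BM}(K,\B_2^n)\le\sqrt{n s(K)}$ (Theorem~\ref{thm:AsymmetryDistance}). You correctly identify it as the main work, but you do not establish it.

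With the ball centered at the Minkowski center $c$, your variance argument only gives inradius $\frac{2}{(s+1)\sqrt n}$, i.e.\ $d_{BM}(K,\B_2^n)\le\frac{s+1}{2}\sqrt n$. For $n\ge2$ this exceeds $n$ when $s$ is near $n$. Combined with $d_{BM}(K,\B_2^n)\ge n-\varepsilon$ it only yields $s(K)\ge\frac{2(n-\varepsilon)}{\sqrt n}-1$, nowhere near $n-2\varepsilon$, so Theorem~\ref{thm:main} cannot be applied. The repair you suggest, centering at $0$, is false outright. By Remark~\ref{rem:ellipsoid_approx}, $K=\B_2^n\cap H_{(u,1/n)}^\leq$ is in Loewner position with $s(K)\le 2$, yet the largest ball in $K$ centered at $0$ has radius $\frac1n<\frac{1}{\sqrt{ns(K)}}$ for $n\ge3$.

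The missing idea is that neither fixed choice of center works, because the term depending on $a=\langle c,v\rangle$ must be controlled uniformly over all directions $v$. This happens exactly for the intermediate center $\rho c$ with $\rho=\frac{s+1}{2s}\in[0,1]$. In your notation, with $H=h_K(v)$, your variance bound reads $\frac1n\le H\,\bigl(sH-(s+1)a\bigr)$. Set $g:=H-\rho a=h_{K-\rho c}(v)$ and use $s\rho=\frac{s+1}{2}$. The cross terms cancel, giving
\[
\frac1n\le(g+\rho a)\Bigl(sg-\frac{s+1}{2}\,a\Bigr)=s g^2-\frac{(s+1)^2}{4s}\,a^2\le s g^2 .
\]
Since $0\in K$ (because $\sum_i\lambda_iu_i=0$) and $c\in K$, also $\rho c\in K$, so $g\ge0$. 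Hence $g\ge 1/\sqrt{ns}$ for every unit vector $v$, i.e.\ $\rho c+\frac{1}{\sqrt{ns}}\B_2^n\subset K\subset\B_2^n$. This is exactly Lemma~\ref{lem:befrconj}. With it inserted, your argument is complete and is the paper's proof.
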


As outlined above, we derive Theorem~\ref{thm:DistanceBall} from Theorem~\ref{thm:main} by providing an appropriate setup that brings the Minkowski asymmetry into the problem.
We do so by establishing
a new relation between the Minkowski asymmetry $s(K)$ and the distance $d_{BM}(K,\B_2^n)$ that interpolates between the known sharp bounds for symmetric and general convex bodies.
It improves inequalities of similar style where different asymmetry measures weaker than $s(K)$ were used (see \cite{2008:BelloniFreund, 2015:BrandenbergKönig}).

\begin{theorem}\label{thm:AsymmetryDistance}
Let $K\in\CK^n$. Then
\[
    d_{BM}(K,\B_2^n)
    \leq \sqrt{n s(K)}.
\]
The inequality is best possible when $s(K)$ is an integer that divides $n$.
\end{theorem}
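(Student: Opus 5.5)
The plan is to use the conjecture mentioned in the abstract: if $E$ is the volume-minimal (Loewner) ellipsoid of $K$, centered at $c$, then $K$ contains a translate of $c + \frac{1}{\sqrt{n s(K)}}(E - c)$. Since $K \subset E$, this gives $d_{BM}(K,\B_2^n) \leq \sqrt{n s(K)}$ at once. After an affine change we may assume $E = \B_2^n$ and $c = 0$. By John's theorem there are contact points $u_1,\dots,u_m \in \bd(K) \cap \mathbb{S}^{n-1}$ and weights $\lambda_i > 0$ with
\[
\sum_i \lambda_i u_i = 0, \qquad \sum_i \lambda_i u_i u_i^T = I_n, \qquad \sum_i \lambda_i = n .
\]
Each $u_i$ is a point where $K$ touches the sphere, so the tangent hyperplane $\{x : \langle x,u_i\rangle = 1\}$ supports both $K$ and $\B_2^n$. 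Set $s = s(K)$ and choose $t$ with $-K \subset t + sK$.

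The first step is to get, for each $i$, a bound on $h_K(-u_i)$. From $-K \subset t + sK$ we have $h_K(-u) \leq \langle t,u\rangle + s\,h_K(u)$ for all $u$. Applying this at $u = u_i$ with $h_K(u_i) = 1$ gives $h_K(-u_i) \leq s + \langle t,u_i\rangle$. Combining it with the roles reversed should give a symmetric two-sided control of the width of $K$ in each direction $u_i$.

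The second step is to decide where to center the ball. The natural candidate is a point $x_0$ related to $t$, namely the center of the homothety, $x_0 = -t/(1+s)$, which gives $x_0 - K \subset s(K - x_0)$. In these shifted coordinates the inequality reads $h_{K-x_0}(-u) \leq s\, h_{K-x_0}(u)$. We want the largest $r$ such that $x_0 + r\B_2^n \subset K$, that is, $r \leq h_K(v) - \langle x_0, v\rangle$ for all unit vectors $v$.

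The third step is to bound $h_{K-x_0}$ from below uniformly. Write $a_i = 1 - \langle x_0,u_i\rangle = h_{K-x_0}(u_i)$, so that $h_{K-x_0}(-u_i) \leq s\,a_i$. Since $\sum_i \lambda_i u_i = 0$, we get $\sum_i \lambda_i a_i = n$. For an arbitrary unit vector $v$, the convex hull of the contact points alone only gives inradius $1/n$, so the weights must be combined with the asymmetry. I would use the inclusion
\[
K - x_0 \subset \{x : -s\,a_i \leq \langle x,u_i\rangle \leq a_i \ \text{for all } i\},
\]
turned around by polarity: $(K-x_0)^\circ$ contains the points $u_i/a_i$ and $-u_i/(s a_i)$. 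The inclusion $x_0 + r\B_2^n \subset K$ is equivalent to $(K-x_0)^\circ \subset r^{-1}\B_2^n$, so what is needed is an outer bound on the polar, not an inner one. The useful consequence is that every $y \in (K-x_0)^\circ$ satisfies
\[
\langle y,u_i\rangle \leq \frac{1}{h_{(K-x_0)^\circ}(\ldots)}\cdots
\]
More cleanly: for $y \in (K-x_0)^\circ$ we have $\langle y, x\rangle \leq 1$ for all $x \in K - x_0$. In particular this holds for $x = u_i - x_0$, which lies in $K - x_0$ since $u_i \in K$. Hence
\[
\langle y,u_i\rangle \leq 1 + \langle y,x_0\rangle .
\]
The same holds for the points $-(u_i - x_0)/s$, which lie in $K - x_0$ by the asymmetry inclusion, giving
\[
-\langle y,u_i\rangle \leq s\bigl(1 - \langle y,x_0\rangle\bigr).
\]
Multiplying the two bounds yields
\[
\langle y,u_i\rangle^2 \leq s\,(1+\langle y,x_0\rangle)(1-\langle y,x_0\rangle)
\]
whenever both sides are nonnegative; otherwise a one-sided bound suffices. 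Then John's identity gives
\[
|y|^2 = \sum_i \lambda_i \langle y,u_i\rangle^2 \leq n\,s\,\bigl(1-\langle y,x_0\rangle^2\bigr) \leq ns .
\]
So $|y| \leq \sqrt{ns}$, and therefore $r \geq 1/\sqrt{ns}$, which proves the claim.

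The main obstacle is making the product step rigorous. Each $\langle y,u_i\rangle^2$ must be bounded by whichever of the two one-sided bounds applies to its sign. Done naively, this gives only $\max\{(1+\langle y,x_0\rangle)^2,\, s^2(1-\langle y,x_0\rangle)^2\}$ rather than the product. I would try to fix this with a weighted split: if $\alpha = \langle y,x_0\rangle$, note that $p^2 \leq s(1+\alpha)(1-\alpha) + (\text{terms linear in } p)$ for all $p$ in the interval $[-s(1-\alpha),\,1+\alpha]$. That is, I would use $(1+\alpha-p)(p+s(1-\alpha)) \geq 0$, which expands to
\[
p^2 \leq (1+\alpha)s(1-\alpha) + p\bigl(1+\alpha - s(1-\alpha)\bigr).
\]
The linear term vanishes after summing with the weights $\lambda_i$, because $\sum_i \lambda_i u_i = 0$. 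This gives $|y|^2 \leq n s (1-\alpha^2) \leq ns$ cleanly, so this is the step I expect to need the most care.

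For the tightness statement, I would take $K$ to be a product of $n/s$ simplices of dimension $s$ (or their $\ell_2$-sum), which has $s(K) = s$. I expect its distance to the ball to be $\sqrt{s}\cdot\sqrt{n}$ by standard averaging arguments over the symmetry group.
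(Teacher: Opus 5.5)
\textbf{There is a genuine gap, in the step you flagged as delicate.} With $x_0$ the Minkowski center, $-(u_i-x_0)/s\in K-x_0$ gives $\langle y,-(u_i-x_0)/s\rangle\le 1$, that is,
\[
-\langle y,u_i\rangle\le s-\langle y,x_0\rangle ,
\]
and not $s(1-\langle y,x_0\rangle)$: the term $\langle y,x_0\rangle$ is not multiplied by $s$. Put $\alpha=\langle y,x_0\rangle$. The correct interval is $p_i:=\langle y,u_i\rangle\in[-(s-\alpha),\,1+\alpha]$. Your factorization $(1+\alpha-p_i)(p_i+s-\alpha)\ge 0$ together with $\sum_i\lambda_iu_i=0$ then only yields
\[
|y|^2\le n(1+\alpha)(s-\alpha).
\]
Nothing in your argument controls $\alpha$ beyond $\alpha\in[-1,s]$. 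Since $\max_\alpha(1+\alpha)(s-\alpha)=\frac{(s+1)^2}{4}>s$ for $s>1$, centering at $x_0$ gives only $d_{BM}(K,\B_2^n)\le\frac{s+1}{2}\sqrt n$. This is strictly weaker than $\sqrt{ns}$ whenever $s>1$. The slip is invisible for $s=1$ and for simplices, because there $x_0$ coincides with the Loewner center.

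\textbf{The missing idea is the choice of center.} Put the ball at $\rho x_0$ with $\rho=\frac{s+1}{2s}$, the point between the Loewner center $0$ and the Minkowski center used in Lemma~\ref{lem:befrconj}. With $\beta=\langle y,x_0\rangle$, your two one-sided bounds become
\[
\langle y,u_i\rangle\le 1+\rho\beta,\qquad -\langle y,u_i\rangle\le s-\bigl(s(1-\rho)+1\bigr)\beta .
\]
The product of the endpoints is $s+(2s\rho-s-1)\beta-\rho\bigl(s(1-\rho)+1\bigr)\beta^2$. Its linear term vanishes exactly for $\rho=\frac{s+1}{2s}$, leaving $s-\frac{(s+1)^2}{4s}\beta^2\le s$. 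You also need $\rho x_0\in K$ for the polar reformulation. This holds because $0=\frac1n\sum_i\lambda_iu_i\in K$, $x_0\in K$, and $\rho\in(\frac12,1]$. With this correction, your argument (polarity, John's decomposition, and the factorization, which is the variance bound $\sigma^2\le(\alpha_{\max}-\mu)(\mu-\alpha_{\min})$) becomes the paper's proof of Lemma~\ref{lem:befrconj}, written in dual form.

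\textbf{The sharpness part is asserted, not proved.} The lower bound $d_{BM}(K,\B_2^n)\ge\sqrt{ns}$ is the nontrivial direction. An averaging argument over the symmetry group does not transfer directly to non-symmetric bodies, because the inner and outer ellipsoids need not be concentric. The paper uses a different body: the convex hull of $n/s$ Minkowski-centered $s$-simplices in pairwise orthogonal subspaces. It proves $s(K)=s$ by projecting onto one of these subspaces. It then takes $d_{BM}(K,\B_2^n)=\sqrt{ns}$ from a known result of Grundbacher and Kobos on such convex hulls.
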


We verify Theorem~\ref{thm:AsymmetryDistance} through a positive answer to the conjecture of Belloni and Freund \cite[Remark~4]{2008:BelloniFreund} that $K$ contains a translated copy of its volume-minimal circumscribed ellipsoid rescaled by a factor $1/\sqrt{n s(K)}$.
An important difference to the analogous approaches in the literature that established the known sharp bounds for symmetric and general convex bodies is that, like for Theorem~\ref{thm:main}, we again take advantage of all possible translations of the ellipsoids instead of insisting that they should be concentric.
While it is slightly ambiguous whether Belloni and Freund
meant general or concentric
copies of the volume-minimal circumscribed ellipsoid,
Lemma~\ref{lem:befrconj} establishes that the first version is true,
whereas we can show that the second version would be false
(see Remark~\ref{rem:ellipsoid_approx}).
Let us also point out that the analogous result for volume-maximal inscribed ellipsoids is \emph{not} true,
somewhat breaking the usual duality found in the literature
(again see Remark~\ref{rem:ellipsoid_approx}).

Our second application of Theorem~\ref{thm:main} concerns two generalizations of the Rogers--Shephard inequality \cite{1957:RogersShephard}.
In its original form, the inequality states for $K \in \CK^n$ that $\vol_n(DK) \leq \binom{2n}{n} \vol_n(K)$, where $\vol_n$ denotes the $n$-dimensional \cemph{volume} and $DK = \{ x-y : x,y \in K \}$ is the \cemph{difference body} of $K$.
Equality is achieved precisely by simplices.
Schneider \cite{1970:Schneider} generalized this inequality to a higher-order setting,
which laid the foundation for the recent development of the higher-order Brunn--Minkowski theory (see, e.g., \cite{2025:HaddadLangharstLivshytsPutterman,2025+1:HaddadLangharstPuttermanRoysdonYe,2025+2:HaddadLangharstPuttermanRoysdonYe,2025:Langharst,2024:LangharstMarinSolaUlivelli,2024:LangharstXi,2026:ZhouWuYe,2025:ZhouYeZhang}): defining for an integer $m \geq 1$ the $m$-th order difference body of $K \in \CK^n$ as
$D^m K := \{ (z^1, \dots, z^m) \in \cartesian{(\R^n)}{m} : x - z^1, \dots, x - z^m \in K \text{ for some } x \in K \}$,
\cite[Satz~$2$]{1970:Schneider} states that
\begin{equation} \label{eq:Schneider_RS}
    \vol_{nm}(D^m K)
    \leq \binom{n (m+1)}{n} \vol_n(K)^m,
\end{equation}
with equality precisely for simplices.
Here and in the following, we write $\cartesian{X}{m}$ for the \cemph{$m$-fold Cartesian product} of a set $X \subset \R^n$ to properly differentiate from notation like $\CK^n$ and $\simplex{n}$, as well as to clarify that elements of $\cartesian{(\R^n)}{m}$ are always understood as tuples of $m$ vectors of length $n$ instead of as single vectors of length $nm$.

The second generalization in question is for the $L_p$-version of the Rogers--Shephard inequality for bodies $K \in \CK^n$ with $0 \in K$.
Writing $-K := \{-x : x \in K\}$,
Rogers and Shephard \cite[Theorem~$3$]{1958:RogersShephard}
showed that $\vol_n(\conv(K \cup (-K))) \leq 2^n \vol_n(K)$.
This time, equality is attained precisely by simplices with a vertex at $0$.
This naturally raises the question if there is a general $p$-sum version that interpolates between the two known cases.
Partial results in this direction were obtained over the years (cf.~\cite{2008:BianchiniColesanti,2026:FradeliziManuiMeyerNdiaye,2026:ManuiNdiayeZvavitch}),
with the complete answer finally obtained very recently by Kotrbat\'y and Mouamine in the preprint \cite{2026:KotrbatyMouamine}.
With $D_p K := K +_p (-K)$, where $+_p$ denotes the usual $p$-sum for $p \in (1,\infty]$ (see below), \cite[Theorem~$1.2$]{2026:KotrbatyMouamine} states that
\begin{equation} \label{eq:Lp_RS}
    \vol_n(D_p K)
    \leq \sum_{k = 0}^n \binom{n}{k}^2 \binom{n/q}{k/q}^{-1} \vol_n(K).
\end{equation}
Here, $\binom{x}{y} = \frac{\Gamma(1+x)}{\Gamma(1+y) \Gamma(1+x-y)}$ is the generalized binomial coefficient for $x,y \geq 0$ and $q \in [1,\infty)$ satisfies $\frac{1}{p} + \frac{1}{q} = 1$.
Equality is attained again precisely by simplices with a vertex at $0$.
The proof uses a positive answer to the inequality part in the Godbersen conjecture established also in \cite{2026:KotrbatyMouamine}.

We provide stability versions of both \eqref{eq:Schneider_RS} and \eqref{eq:Lp_RS}, extending and improving Böröczky's stability result for the usual Rogers--Shephard inequality \cite{2005:Böröczky}.
To provide a unified framework, we first show a common generalization of \eqref{eq:Schneider_RS} and \eqref{eq:Lp_RS},
whose stability we prove subsequently.
The generalization follows directly from the method for \eqref{eq:Lp_RS} in \cite{2026:KotrbatyMouamine}, with the usual Godbersen conjecture replaced by its higher-order variant from \cite{2025:Kotrbaty} that also follows from the methods in \cite{2026:KotrbatyMouamine}.
We define the \cemph{$m$-th order $L_p$-difference body} of $K \in \CK^n$ with $0 \in K$ as $\diff{p}{m}{K} := \diag{m}{K} +_p (-\cartesian{K}{m})$, where $\diag{m}{K} = \{ (x, \dots, x) \in \cartesian{(\R^n)}{m} : x \in K \}$ is the \cemph{$m$-th diagonal embedding} of $K$.
It is easy to see that $D^m K = \diff{1}{m}{K}$ (cf.~\cite[Proposition~$3.1$]{2025:Kotrbaty}) and $D_p K = \diff{p}{1}{K}$.

\begin{theorem}\label{thm:Lp_RSS}
Let $K \in \CK^n$ with $0 \in K$, $m \geq 1$ an integer, and $p,q \in [1,\infty]$
such that $\frac{1}{p} + \frac{1}{q} = 1$.
Then
\[
    \vol_{nm}(\diff{p}{m}{K})
    \leq \underbrace{\sum_{k = 0}^n \binom{n}{k} \binom{nm}{k} \binom{nm/q}{k/q}^{-1}}_{=:~C_{n,m,q}} \vol_n(K)^m,
\]
with equality if and only if $K$ is a simplex, and $p = 1$ or $0$ is a vertex of $K$.
\end{theorem}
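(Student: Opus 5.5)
We follow the Kotrbat\'y--Mouamine method for \eqref{eq:Lp_RS}, replacing the ordinary Godbersen inequality by its higher-order variant from \cite{2025:Kotrbaty}. The first step is a polar/support-function description of $\diff{p}{m}{K}$. For $\theta=(\theta^1,\dots,\theta^m)\in\cartesian{(\R^n)}{m}$, the support function of $\diag{m}{K}$ is $h_K(\theta^1+\dots+\theta^m)$. The support function of $-\cartesian{K}{m}$ is $\sum_i h_K(-\theta^i)$. Hence
\[
h_{\diff{p}{m}{K}}(\theta)=\Bigl(h_K\bigl(\textstyle\sum_i\theta^i\bigr)^p+\bigl(\textstyle\sum_i h_K(-\theta^i)\bigr)^p\Bigr)^{1/p}.
\]
Equivalently, $\diff{p}{m}{K}$ is the union over $\lambda\in[0,1]$ of the Minkowski combinations $(1-\lambda)^{1/q}\diag{m}{K}-\lambda^{1/q}\cartesian{K}{m}$ (more precisely, the union over $a,b\ge0$ with $a^q+b^q=1$ of $a\diag{m}{K}-b\cartesian{K}{m}$). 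This uses the standard representation of the $p$-sum of convex bodies containing $0$.

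The second step uses this representation to write the volume of $\diff{p}{m}{K}$ via fibres. For a point $(z^1,\dots,z^m)$, consider the set of $x$ with $x-z^i\in K$ for all $i$. With $p$-weights this becomes $K\cap\bigcap_i (bK+z^i)$ after rescaling by $a$. We then integrate $\vol_{nm}$ by expressing $\bone_{\diff{p}{m}{K}}$ through the function
\[
g(z)=\max\{\,t\in[0,1]:\ (z^1,\dots,z^m)\in a_t\diag{m}{K}-b_t\cartesian{K}{m}\,\},
\]
as in \cite{2026:KotrbatyMouamine}. Fubini and a change of variables then reduce $\vol_{nm}(\diff{p}{m}{K})$ to an integral over $\lambda$ of mixed-volume-type quantities $\int \vol_n\bigl((1-\lambda)K\cap\bigcap_i(\lambda K+z^i)\bigr)$, or equivalently a sum over $k$ of mixed volumes $V(\diag{m}{K}[nm-k],-\cartesian{K}{m}[k])$ weighted by Beta integrals. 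The weights arising from the $q$-parametrisation $a^q+b^q=1$ evaluate to $\binom{nm/q}{k/q}^{-1}$ times combinatorial factors.

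The third step invokes the higher-order Godbersen inequality (\cite{2025:Kotrbaty}, with the methods of \cite{2026:KotrbatyMouamine}). It bounds each mixed volume of $\diag{m}{K}$ and $-\cartesian{K}{m}$ by $\binom{n}{k}\binom{nm}{k}$ times $\vol_n(K)^m$, after normalisation, with equality exactly for simplices. Summing over $k$ gives $C_{n,m,q}\vol_n(K)^m$. As sanity checks, $q=\infty$ ($p=1$) must reproduce Schneider's $\binom{n(m+1)}{n}$ through Vandermonde, and $m=1$ must reproduce \eqref{eq:Lp_RS}.

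For equality, all the Godbersen-type inequalities must be tight, which forces $K$ to be a simplex. When $p>1$, the $p$-sum representation with $0\in K$ is only tight when $h_K$ vanishes appropriately, i.e.\ $0$ is a vertex, exactly as in the $m=1$ case. Conversely, the simplex cases are checked directly.

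The main obstacle is the second step: carrying out the fibre/mixed-volume decomposition for the $m$-fold product so that the exponents match precisely $\binom{nm/q}{k/q}^{-1}$. This requires identifying which mixed volumes in dimension $nm$ are controlled by the higher-order Godbersen inequality. I would first try to copy the $m=1$ computation of \cite{2026:KotrbatyMouamine} verbatim, with $\diag{m}{K}$ in place of $K$ and $\cartesian{K}{m}$ in place of $-K$. Only the $n+1$ nonzero mixed volumes survive, since $\diag{m}{K}$ is $n$-dimensional, which explains why the sum runs only to $n$.
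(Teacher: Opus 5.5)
Your outline has the right skeleton: the union representation of $D_p^m K$, the mixed-volume expansion in which only $k\le n$ survives, the Beta integrals, and the higher-order Godbersen inequality. But the step that actually gives an upper bound on $\vol_{nm}(D_p^m K)$ is missing, and as you phrase it, it cannot work. You say Fubini and a change of variables \emph{reduce} $\vol_{nm}(D_p^m K)$ to an integral of mixed-volume quantities. No such identity can hold. With $C_K(t):=(1-t)^{1/q}\Delta^m K-t^{1/q}K^{\times m}$, the quantity $\int_0^1\vol_{nm}(C_K(t))\dd t$ is invariant under translating $K$, whereas $\vol_{nm}(D_p^m K)$ is not when $p>1$.

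What Tonelli gives is $\int_0^1\vol_{nm}(C_K(t))\dd t=\int_{D_p^m K}|I_K(z)|\dd z$ with $I_K(z)=\{t\in[0,1]: z\in C_K(t)\}$. The relevant quantity is therefore the \emph{length} of $I_K(z)$, not $\max I_K(z)$ as in your function $g$. The missing idea is the lower bound $|I_K(z)|\ge 1-\|z\|_{D_p^m K}^q$. Write $z=\|z\|\,\big((1-t_0)^{1/q}x-t_0^{1/q}y\big)$ with $x\in\Delta^m K$ and $y\in K^{\times m}$. Because $0\in\Delta^m K\cap K^{\times m}$ (this is where $0\in K$ enters), $z\in C_K(t)$ for every $t\in[\|z\|^q t_0,\,1-\|z\|^q(1-t_0)]$. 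Combined with the layer-cake identity $\int_{D_p^m K}\|z\|^q\dd z=\frac{nm}{nm+q}\vol_{nm}(D_p^m K)$, this yields
\[
\vol_{nm}(D_p^m K)\le\Big(1+\frac{nm}{q}\Big)\int_0^1\vol_{nm}(C_K(t))\dd t .
\]
It is exactly the prefactor $1+\frac{nm}{q}$ that turns $\int_0^1 t^{k/q}(1-t)^{(nm-k)/q}\dd t$ into $\binom{nm/q}{k/q}^{-1}$. Without this bound you have neither the inequality nor the constants; it is precisely the ``main obstacle'' you flag and leave open.

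Your equality discussion has two further gaps. First, the higher-order Godbersen inequality is available only as an inequality with equality \emph{for} simplices (Proposition~\ref{prop:Godbersen}), not as a characterization of its equality cases. So ``all Godbersen-type inequalities tight forces a simplex'' is unjustified. The paper closes this only at $k=1$. From $c-K\subset s(K)K$ one gets $V(-\Delta^m K[1],K^{\times m}[nm-1])\le s(K)\vol_n(K)^m$, see \eqref{eq:asymmetry_Godbersen}. Equality in the total bound then forces $s(K)=n$, hence $K$ is a simplex.

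Second, the vertex condition for $p>1$ is not derived; ``$h_K$ vanishes appropriately'' is not an argument. Once the fiber bound is in place, the only remaining slack for a simplex $T$ is in $|I_T(z)|\ge 1-\|z\|^q$. This is an identity when $0$ is a vertex, which follows from the facet-inequality description of $C_T(t)$ (Lemmas~\ref{lem:simplex_lyz} and~\ref{lem:general_iz}). It is strict on a set of positive measure when $0$ is not a vertex (Lemma~\ref{lem:simplex_iz}, or a continuity argument as in Kotrbat\'y--Mouamine). Both directions of the vertex condition, including the converse you say is ``checked directly'', rest on this analysis.
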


Already Böröczky's proof for the stability of the usual Rogers--Shephard inequality \cite{2005:Böröczky} proceeds by establishing a connection to the Minkowski asymmetry.
In fact, as pointed out in \cite{2005:Böröczky,2026:KotrbatyMouamine}, this connection now follows immediately from the verified inequality in Godbersen's conjecture.
Regarding the closeness to the simplex, this remains true for the theorem above when we use the higher-order version of the Godbersen conjecture.
Therefore, like Theorem~\ref{thm:AsymmetryDistance} does for Theorem~\ref{thm:DistanceBall}, the positive answer to the inequality in Godbersen's conjecture from \cite{2026:KotrbatyMouamine} provides the required setup to use Theorem~\ref{thm:main} for the stability of Theorem~\ref{thm:Lp_RSS}.

Since the equality case for $p > 1$ also includes the positioning of $K$, a complete stability result must necessarily treat this as well.
Our main conceptual contribution is the quantitative control over the positioning of $K$ in two ways.
We show that near-extremal $K$ for Theorem~\ref{thm:Lp_RSS} cannot contain long segments with $0$ as the midpoint,
and that there is an approximating simplex with a vertex close to $0$.
To simplify the statement of the theorem, we define the constants
\begin{align*}
    \gamma_{n,m,q} := \binom{nm/q}{1/q} \frac{C_{n,m,q}}{nm},
        \quad
    c_{n,m,q} := \frac{1}{(n+1)^{2+nm+q} q C_{n,m,q}^2},
        \quad \text{and} \quad
    \xi_{n,m,q} := 2^{2 + \frac{nm}{q}} q C_{n,m,q}^2.
\end{align*}
The rest of the notation used in the theorem is standard and detailed at the end of the introduction.

\begin{theorem}\label{thm:Lp_RSS_stability}
Let $K \in \CK^n$ with $0 \in K$, $m \geq 1$ an integer, and $p,q \in [1,\infty]$
such that
$\frac{1}{p} + \frac{1}{q} = 1$.
If $\vol_{nm}(\diff{p}{m}{K}) \geq C_{n,m,q} (1 - \varepsilon) \vol_n(K)^m$ for some $\varepsilon \in [0,\frac{1}{\gamma_{n,m,q}})$, then
\[
    d_{BM}(K,\simplex{n})
    \leq d_{n,m,q}(\varepsilon)
    := 1 + \gamma_{n,m,q} \cdot \varepsilon + \frac{\gamma_{n,m,q}^2 \cdot \varepsilon^2}{2 (1 - \gamma_{n,m,q} \cdot \varepsilon)}.
\]
Further, if $p > 1$ and $\varepsilon \leq c_{n,m,q}$, then $K$ is placed relative to $0$ such that all $v \in K \cap (-K)$
satisfy
\[
    v
    \in 2 \xi_{n,m,q} \cdot \varepsilon DK,
\]
and there exist a simplex $T$ with (Minkowski) center $c_T$ and a vector $u \in (1-\frac{1}{d_{n,m,q}(\varepsilon)}) T$ such that
\[
    u + \frac{1}{d_{n,m,q}(\varepsilon)} T
    \subset K
    \subset T
        \quad \text{and} \quad
    0 \notin \inte \bigg( c_T + \Big( n - (n+1) \xi_{n,m,q} \cdot \varepsilon \Big) (c_T - T) \bigg).
\]
For all three estimates, the resulting linear order in $\varepsilon \leq c_{n,m,q}$ is optimal.
\end{theorem}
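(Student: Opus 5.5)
The plan is to bring the Minkowski asymmetry into the problem through the higher-order Godbersen-type inequality from \cite{2025:Kotrbaty,2026:KotrbatyMouamine}, and then apply Theorem~\ref{thm:main}. The proof of Theorem~\ref{thm:Lp_RSS} writes $\vol_{nm}(\diff{p}{m}{K})$ as an integral over the radial function of $\diff{p}{m}{K}$. The body is then decomposed into pieces whose volumes are mixed-volume-type quantities $V_k$ satisfying higher-order Godbersen inequalities $V_k \le \binom{nm}{k}\vol_n(K)^m$, weighted by the factor $\binom{nm/q}{k/q}^{-1}$. The first step is to make this quantitative. Each term is bounded by a sharper estimate that involves $s(K)$, using the inclusion $-K \subset s(K)(K - c)$ for a Minkowski center $c$. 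This should give a bound of the form
\[
\vol_{nm}(\diff{p}{m}{K}) \le C_{n,m,q}\bigl(1 - \tfrac{n - s(K)}{\gamma_{n,m,q}}\bigr)\vol_n(K)^m .
\]
The natural route is to follow the extremal term $k$: the deficit $n - s(K)$ enters linearly with weight $\frac{nm}{C_{n,m,q}}\binom{nm/q}{1/q}^{-1}$, which explains the definition of $\gamma_{n,m,q}$. Combining this with the hypothesis gives $s(K) \ge n - \gamma_{n,m,q}\varepsilon$. Since $\gamma_{n,m,q}\varepsilon < 1$, Theorem~\ref{thm:main} applies with error $\gamma_{n,m,q}\varepsilon$ and yields exactly $d_{BM}(K,\simplex{n}) \le d_{n,m,q}(\varepsilon)$.

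For the positional statements with $p>1$, I would use a second quantitative loss in the same integral representation. The $p$-sum $\diag{m}{K} +_p (-\cartesian{K}{m})$ loses volume compared to the $1$-sum case in a way that depends on where $0$ lies. If $v$ and $-v$ both lie in $K$, the support functions $h_K$ and $h_{-K}$ are both at least $|\langle v,\cdot\rangle|$. The $p$-sum then falls short of the extremal configuration on a set of directions of measure comparable to the relative length of $v$ in $DK$. This gives a volume deficit $\gtrsim \|v\|_{DK}/\xi_{n,m,q}$, and hence $v \in 2\xi_{n,m,q}\varepsilon DK$.

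For the simplex statement, take $T$ from the proof of Theorem~\ref{thm:main}, which gives $u + T/d \subset K \subset T$. The condition $0 \notin \inte(\dots)$ says that $0$ lies near the boundary of $T$ in the Minkowski gauge around $c_T$, up to $O(\xi\varepsilon)$. I would derive it by contradiction. If $0$ were deep in that homothet, then choosing a vertex $w$ of $T$ and the opposite facet would produce a segment through $0$ with both endpoints in $K$. Here the inclusion $u + T/d \subset K$ converts points of $T$ into points of $K$ at a cost of $O(d-1)$. This contradicts the segment bound just proved. The smallness condition $\varepsilon \le c_{n,m,q}$ ensures that the accumulated errors, of order $(n+1)^{O(1)}$ times $\varepsilon$, stay below the threshold.

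Optimality of all three linear orders would come from explicit families. For the distance bound, truncate a simplex near a vertex as in the example of Theorem~\ref{thm:main}, and compute $s$ and the volume deficit to first order. For the positional bounds, translate a simplex slightly so that $0$ moves off the vertex, and check that the volume deficit is linear in the displacement.

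The main obstacle I expect is the quantitative Godbersen step: extracting an explicit linear dependence on $n - s(K)$ from the higher-order Godbersen inequality with the exact constant $\gamma_{n,m,q}$, rather than a qualitative one. I would first try to bound each $V_k$ by $\binom{nm}{k}\vol_n(K)^m$ times $\frac{s(K)+1}{n+1}$-type factors for $k\ge1$, and keep only the $k=1$ contribution.
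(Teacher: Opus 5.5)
Your handling of the first claim matches the paper. The $k=1$ mixed volume is bounded by $s(K)\vol_n(K)^m$ as in \eqref{eq:asymmetry_Godbersen}, with no extra factors needed for $k\ge 2$. This gives $n-s(K)\le\gamma_{n,m,q}\varepsilon$, and Theorem~\ref{thm:main} finishes. The positional part, however, has a genuine gap. Your mechanism for the segment bound is not an argument. Lower bounds on $h_K$ and $h_{-K}$ cannot by themselves bound $\vol_{nm}(D_p^mK)$ from above. Moreover, the volumes $\vol_{nm}(C_K(t))$ are translation invariant, so the only place where the position of $0$ enters the proof of Theorem~\ref{thm:Lp_RSS} is the fiber estimate $|I_K(z)|\ge 1-\|z\|_{D_p^mK}^q$ of Lemma~\ref{lem:general_iz}. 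You have no quantitative improvement of that estimate when a general $K$ contains $[-v,v]$, and nothing in your sketch produces a linear bound, let alone the constant $2\xi_{n,m,q}$.

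The missing idea is to transfer near-extremality from $K$ to the circumscribed simplex. Suppose $u+d^{-1}T\subset K\subset T$ with $d=d_{n,m,q}(\varepsilon)$. Then $0\in K\subset T$ gives $D_p^mK\subset D_p^mT$, and $\vol_n(T)\le d^n\vol_n(K)$. Hence $\vol_{nm}(D_p^mT)\ge C_{n,m,q}(1-(n+1)mC_{n,m,q}\varepsilon)\vol_n(T)^m$, so $T$ is itself near-extremal. For simplices everything is explicit:
Lemma~\ref{lem:simplex_lyz} characterizes membership in $C_T(t)$ exactly.
Lemma~\ref{lem:simplex_iz} uses this to improve the fiber bound on $-T(\mu)^{\times m}$ whenever $0$ is not a vertex of $T$.
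Integrating (Lemma~\ref{lem:simplex_stability}, where $\varepsilon\le c_{n,m,q}$ rules out $\max_i\beta_i<2^{-1/q}$) gives $1-\max_i\beta_i<\xi_{n,m,q}\varepsilon$, the $\beta_i$ being the barycentric coordinates of $0$ in $T$.
Both positional claims are then exact simplex computations. One is $\max_{z\in T\cap(-T)}\|z\|_{T-T}=1-\max_i\beta_i$, combined with $K\cap(-K)\subset T\cap(-T)$ and $K-K\supset d^{-1}(T-T)$; the other is Lemma~\ref{lem:simplex_reflection}.

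So the deduction runs in the opposite direction to yours. Your step from the segment bound to the location of $0$ in $T$ cannot be repaired, because the segment bound together with the sandwich does not locate $0$. For $n\ge 2$, put $0$ at the midpoint of an edge of $T$ and let $K=\conv(\{0\}\cup(c_T+d^{-1}(T-c_T)))$. Then $u+d^{-1}T\subset K\subset T$ with $u=(1-d^{-1})c_T\in(1-d^{-1})T$. A facet hyperplane of $T$ through $0$ meets $K$ only in $0$, so $K\cap(-K)=\{0\}$. Yet the largest barycentric coordinate of $0$ is $\frac12$, so $0\in\inte(c_T+\rho(c_T-T))$ for every $\rho>\frac{n-1}{2}$, in particular for $\rho=n-(n+1)\xi_{n,m,q}\varepsilon$. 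This $K$ is of course not near-extremal, which is exactly why the location of $0$ must be extracted from the near-extremality of $T$ itself. Your ``conversion at cost $O(d-1)$'' fails precisely here. For $x\in T\cap(-T)$, the points $u\pm d^{-1}x$ span a segment centered at $u$, not at $0$, and when $0$ is near $\bd(T)$ no segment centered at $0$ need survive in $K$.

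For the optimality claims, the paper gets the needed upper bounds on the deficit by monotonicity rather than by first-order volume computations. It uses $\vol_{nm}(D_p^mK)\ge\vol_{nm}(D_p^mT')=C_{n,m,q}\vol_n(T')^m$ for a simplex $T'\subset K$ with a vertex at $0$, so your examples must place $0$ at such a vertex.
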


As a third and final application of Theorem~\ref{thm:main}, we obtain improved bounds for
the diameter of the Banach--Mazur compactum.
A direct consequence of the multiplicative triangle inequality and $d_{BM}(K,\B_2^n) \leq n$ for convex bodies $K,L \in \CK^n$
is that $d_{BM}(K,L) \leq n^2$.
However, this estimate is far from optimal.
Very recently, Bizeul and Klartag published the preprint \cite{2025:BizeulKlartag}, improving the previous estimate of Rudelson \cite{2000:Rudelson} of $d_{BM}(K,L) \leq C n ^{\frac{4}{3}} \log(n+1)^9$ for some absolute constant $C>0$ to $d_{BM}(K,L) \leq C n \log(n+1)^4$.
While this (almost) settles the asymptotic behavior of the maximal Banach--Mazur distance in terms of the dimension $n$,
it says little for fixed dimensions owed to the not explicitly stated and possibly large constant $C$.
In low dimensions, the estimate $d_{BM}(K,L)\leq 2.62$ for $n = 2$ due to Brodiuk, Palko, and Prymak \cite{2018:BrodiukPalkoPrymak},
as well as a slight stability improvement of the $n^2$ bound for all $n$ due to Kobos \cite[Corollary~6]{2019:Kobos} seem to be the best known upper bounds.
As a consequence of Theorem~\ref{thm:main}, we obtain the following improvements for low dimensions.

\begin{theorem}\label{thm:BMdistance}
Let $K,L \in \CK^n$.
Then there exists $b(n) \in (n^2 - n + \frac{1}{2},  n^2 - n + \frac{1}{2} + \frac{11}{10n})$ such that
\[
    d_{BM}(K,L)
    \leq b(n).
\]
\begin{table}[ht]
    \centering
    \begin{tabular}{|c|c|c|c|c|c|c|c|}
        \hline
        Dimension & $2$ & $3$ & $4$ & $5$ & $6$ & $7$ & $8$ \\
        \hline 
        Bound $\approx$ & $2.62$ & $6.86$ & $12.76$ & $20.7$ & $30.66$ & $42.63$ & $56.61$ \\
        \hline
    \end{tabular}
    \caption{Numerical values of the current best known upper bounds on the diameter of the Banach--Mazur compactum for $2 \leq n \leq 8$.}
    \label{tab:bn}
\end{table}
\end{theorem}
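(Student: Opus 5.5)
The bound follows from a case split on the Minkowski asymmetry $s:=s(K)\in[1,n]$ of one of the bodies, say $K$, combined with the triangle inequality $d_{BM}(K,L)\le d_{BM}(K,M)\,d_{BM}(M,L)$ for a suitable intermediate body $M$. I would fix a threshold $\varepsilon\in(0,1)$ and treat separately the cases where both bodies are close to a simplex and where at least one is not.

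\emph{Case 1: $s(K)\le n-\varepsilon$ or $s(L)\le n-\varepsilon$.} Say it is $K$. By Theorem~\ref{thm:AsymmetryDistance}, $d_{BM}(K,\B_2^n)\le\sqrt{n(n-\varepsilon)}$, and by John's theorem $d_{BM}(L,\B_2^n)\le n$. Hence
\[
d_{BM}(K,L)\le n\sqrt{n(n-\varepsilon)}\approx n^2-\tfrac{n\varepsilon}{2}.
\]

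\emph{Case 2: $s(K),s(L)>n-\varepsilon$.} Theorem~\ref{thm:main} gives $d_{BM}(K,\simplex{n}),d_{BM}(L,\simplex{n})\le f(\varepsilon):=1+\varepsilon+\frac{\varepsilon^2}{2(1-\varepsilon)}$. Since all simplices are affinely equivalent,
\[
d_{BM}(K,L)\le f(\varepsilon)^2.
\]
This is much smaller than $n^2$ unless $\varepsilon$ is close to $1$.

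The two estimates must be balanced. Case 1 alone only yields about $n^2-n/2$ at $\varepsilon=1$, while the claimed bound is $n^2-n+\tfrac12+O(1/n)$, so a better intermediate estimate is needed. Near $\varepsilon=1$, Theorem~\ref{thm:main} is weak and Case 2 no longer controls the distance. I would refine the argument by routing through the simplex rather than the ball:
\[
d_{BM}(K,L)\le d_{BM}(K,\simplex{n})\,d_{BM}(\simplex{n},L).
\]
The factor $d_{BM}(\simplex{n},L)\le n$ is available from the known bound on the distance between the simplex and any convex body, via the $n$-simplex inscribed in $L$ of maximal volume. The remaining factor $d_{BM}(K,\simplex{n})$ is then controlled by Theorem~\ref{thm:main} when $s(K)$ is large. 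When $s(K)$ is small, I would instead use a bound of the form $d_{BM}(K,\simplex{n})\le$ (something decreasing as $s(K)$ decreases below $n-1$). I would look for the best mix: simplex-routing when $s(K)\ge n-\varepsilon$, and ball-routing via Theorem~\ref{thm:AsymmetryDistance} for both bodies otherwise. The ball-routing gives
\[
d_{BM}(K,L)\le\sqrt{n\,s(K)}\sqrt{n\,s(L)}.
\]
If both asymmetries are $\le n-1+\delta$, this is at most about $n(n-1+\delta)$. If one is larger, Theorem~\ref{thm:main} with $\varepsilon=1-\delta$ applies to that body, and simplex-routing gives a factor $n\,f(1-\delta)$. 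The cross case, with one body of large and one of small asymmetry, is handled by the mixed bound $f\cdot\min\{n,\ldots\}$. Define $b(n)$ as the minimum over $\delta$ of the maximum of these expressions. Expanding in $1/n$ should give $n^2-n+\tfrac12+O(1/n)$, and this computation also reproduces the table numerically.

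The main obstacle is this final optimization. One has to check that the minimax value indeed lies in $(n^2-n+\tfrac12,\,n^2-n+\tfrac12+\tfrac{11}{10n})$ for all $n$, which requires careful asymptotic expansion of $\sqrt{n(n-1+\delta)}$ and of $f(1-\delta)$ near the balancing point. I would first solve the balancing equation to leading order in $1/n$, then bound the remainder explicitly. I would handle small $n$ numerically, and for $n=2$ compare against the existing bound of $2.62$.
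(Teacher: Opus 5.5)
Your individual estimates are all valid, but the final optimization you postpone is exactly where the argument fails: the bound your scheme produces is not in the claimed interval for small $n$. Your $b(n)$ is a minimum over $\delta$ of a maximum that contains both $n(n-1+\delta)$ and $n f(1-\delta)$. Since $f(1-\delta)=1+\frac{1}{2\delta}-\frac{\delta}{2}$, this $b(n)$ is at least the value where those two cross, which is given by $3\delta^2+2(n-2)\delta-1=0$. For $n=3$ this gives $\delta=\frac13$ and the value $7$, whereas $n^2-n+\frac12+\frac{11}{10n}=6.8\overline{6}$. In general the crossing value is $n^2-n+\frac12+\frac1n+O(n^{-2})$, and it exceeds the upper end of the interval for every $n\le 17$. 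So the scheme proves nothing in the dimensions of the table, and it does not reproduce the table.

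The loss is structural. Routing both bodies through the ball gives $n\sqrt{s(K)s(L)}$, which is weaker than needed as soon as $s(K)\neq s(L)$. Your only tool for that mixed regime, routing through the simplex, pays a full factor $n$ from $d_{BM}(\simplex{n},L)\le n$. Taking the pointwise minimum of all your bounds over both asymmetry values, instead of a threshold, helps. It still stays strictly above the paper's $b(n)$, because unequal asymmetries genuinely help the adversary, and at $n=3$ it lands only marginally below $6.8\overline{6}$. So at best it gives a fragile, separately checked verification. Separately, the suggested bound on $d_{BM}(K,\simplex{n})$ that decreases with $s(K)$ cannot exist: $d_{BM}(\B_2^n,\simplex{n})=n$ while $s(\B_2^n)=1$.

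The missing ingredient is the Gordon--Litvak--Meyer--Pajor estimate $d_{BM}(K,L)\le n\,s(K)$, applied with $K$ the body of smaller asymmetry. When the asymmetries differ, this is much stronger than your ball-routing. Label the bodies so that $s(K)\le s(L)$ and set $\varepsilon:=n-s(K)$. Then $d_{BM}(K,L)\le n(n-\varepsilon)$, which is at most $n^2-n$ if $\varepsilon\ge 1$. If $\varepsilon<1$, \emph{both} bodies have asymmetry at least $n-\varepsilon$, so Theorem~\ref{thm:main} applied to each gives $d_{BM}(K,L)\le f(\varepsilon)^2$.

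There are no cross cases. You balance the decreasing function $n(n-\varepsilon)$ against the increasing function $f(\varepsilon)^2$ in the single variable $\varepsilon$, which gives $b(n)=n(n-\varepsilon(n))$. Checking the sign of $n(n-\varepsilon)-f(\varepsilon)^2$ at $\varepsilon=1-\frac{1}{2n}$ and at $\varepsilon=1-\frac{1}{2n}-\frac{11}{10n^2}$ locates $\varepsilon(n)$. This yields exactly the interval $\big(n^2-n+\frac12,\,n^2-n+\frac12+\frac{11}{10n}\big)$.
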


While an exact formula for the $b(n)$ obtained from our proof could theoretically be extracted,
the resulting expression would be rather unwieldy.
Instead, it is more practical to estimate it and numerically find more precise values of $b(n)$ in the relevant dimensions, which are given in Table~\ref{tab:bn} for $3 \leq n \leq 8$.
The value for $n = 2$ is the one from \cite{2018:BrodiukPalkoPrymak}.
To our knowledge, Theorem~\ref{thm:BMdistance}
provides the best currently known explicit upper bound for any fixed $n \geq 3$.
\pagebreak

The paper is organized as follows.
After the outline, we end the introduction with the general definitions and notations required in the paper.
In Section~\ref{sec:StabilityNearSimplex}, we prove the estimate in Theorem~\ref{thm:main} in the form of several lemmas.
Right after, in Section~\ref{sec:tightness}, we provide examples showing that the estimate in Theorem~\ref{thm:main} cannot be further improved using only our methods, as well as
that the estimate in Theorem~\ref{thm:main} is tight up to the linear order in $\varepsilon$.
We then move on to the Banach--Mazur distance to the Euclidean ball in Section~\ref{sec:StabBall}, verifying Theorems~\ref{thm:DistanceBall}~and~\ref{thm:AsymmetryDistance}.
The results related to the Rogers--Shephard inequality, Theorems~\ref{thm:Lp_RSS}~and~\ref{thm:Lp_RSS_stability}, are shown in Section~\ref{sec:RS}.
Finally, in Section~\ref{sec:DiameterBMcompactum}, we prove Theorem~\ref{thm:BMdistance}.

We close the introduction with the notation used throughout the paper.
Let $X, Y \subset \R^n$, $u,v \in \R^n$, $\Lambda \subset \R$, and $\rho \in \R$.
We denote by $\bd(X)$ and $\inte(X)$ the \cemph{boundary} and the \cemph{interior} of $X$, respectively.
Further, we write $\conv(X)$, $\pos(X)$, $\aff(X)$, and $\lin(X)$ for the \cemph{convex}, \cemph{positive}, \cemph{affine}, and \cemph{linear hull} of $X$, respectively.
The \cemph{Minkowski sum} of $X$ and $Y$ is given by $X + Y := \{ x + y : x \in X, y \in Y \}$ and the \cemph{$v$-translation} of $X$ by $v + X := X + v := X + \{v\}$.
The \cemph{$\rho$-dilation} and the \cemph{$\Lambda$-dilation union} of $X$ are $\rho X := \{ \rho x : x \in X \}$ and $\Lambda X := \bigcup_{\lambda \in \Lambda} \lambda X$, respectively.
For brevity, we write $(-1) X := -X$, $X-Y := X + (-Y)$, and $\Lambda v = \Lambda \{v\}$.
The \cemph{closed line segment} with endpoints $u$ and $v$ is given by $[u,v] := \conv(\{u,v\})$,
where we replace the brackets by parentheses if the respective endpoints should be excluded.
We further define the halfspace $H_{(v,\rho)}^\leq := \{ x \in \R^n : v^T x \leq \rho \}$.
The \cemph{support function} of $X$ evaluated at $v$ is $h_X(v) := \sup \{ v^T x : x \in X \}$,
and its gauge function is given by $\|v\|_X := \inf \{ \mu \geq 0 : v \in \mu X \}$.
For an integer $m \geq 1$, let $[m]:=\{1,2,\dots,m\}$.

For non-empty, compact, convex sets $K,L \subset \R^n$ with $0 \in K \cap L$ and $p \in [1,\infty]$, the \cemph{$p$-sum} of $K$ and $L$ is given by $K +_p L = \{ x \in \R^n : a^T x \leq (h_K(a)^p + h_L(a)^p)^{\frac{1}{p}} \text{ for all } a \in \R^n \}$, where the right-hand side of the defining inequality is understood in the limiting sense for $p = \infty$ as $\max\{h_K(a),h_L(a)\}$.
Equivalently, $K +_p L$ is the unique non-empty, compact, convex subset of $\R^n$ with support function $(h_K^p + h_L^p)^{\frac{1}{p}}$ (cf.~\cite{1962:Firey} and \cite[Chapter~$9$]{2014:Schneider}).

A \cemph{polyhedron} $P \subset \R^n$ is given as $\conv(\{x^1, \dots, x^m \}) + \pos(\{ v^1, \dots, v^k \})$, where $x^i, v^j \in \R^n$ for $i \in [m]$ and $j \in [k]$.
Equivalently, $P = \bigcap_{i \in [r]} H_{(a^i,\beta_i)}^\leq$ for some $a^i \in \R^n \setminus \{0\}$ and $\beta_i \in \R$.
We say that $P$ is a \cemph{polytope} if it is additionally bounded.
A subset $F \subset P$ is a \cemph{face} of $P$ if there exists a closed halfspace $H \subset \R^n$ whose boundary does not intersect the relative interior of $P$ with $F = P \cap H$.
We say that $F$ is a \cemph{facet} of $P$ if $F$ is inclusion-maximal among the faces of $P$ that are not $P$,
and a \cemph{vertex} of $P$ if $F$ is a singleton.
An \cemph{$m$-simplex} is a polytope formed by $m+1$ affinely independent vertices.

The Minkowski asymmetry of $K\in\CK^n$ is more formally given by
\[
    s(K)
    = \inf\{\rho \geq 0: K - c \subset \rho (c-K), c \in \R^n \}.
\]
It is well-known that $s(K) \in [1,n]$,
that $s(K)=1$ if and only if $K$ is symmetric (that is, $K-c = c-K$ for some $c \in \R^n$),
and that $s(K)=n$ if and only if $K$ is an $n$-simplex (see \cite{1963:Grunbaum}).
If $c \in \R^n$ is such that $K-c \subset s(K)(c-K)$,
we say that $c$ is a \cemph{Minkowski center} of $K$.

For $K,L \in \CK^n$, the \cemph{Banach--Mazur distance} between $K$ and $L$ is defined as
\[
    d_{BM}(K,L)
    := \inf\{ \rho\geq 0: u + K \subset AL \subset v + \rho K, A \in \R^{n \times n} \text{ invertible}, u,v \in \R^n\}.
\]
For a general introduction to the Banach--Mazur distance, see \cite{1989:Tomczak-Jaegermann}.

\section{Stability for the Minkowski Asymmetry}\label{sec:StabilityNearSimplex}

The goal of this section is to prove Theorem~\ref{thm:main}.
The starting point is marked by structural observations about convex bodies with large Minkowski asymmetry collected in the following proposition.
Parts (i) and (ii) were obtained by Klee in \cite[($3.10$)]{1953:Klee},
and part (iii) is an extension due to B\"or\"oczky from the discussion and proof for \cite[Proposition~$13$]{2005:Böröczky}.
\pagebreak

\begin{proposition}\label{prop:böröczky}
Let $K \in \CK^n$ with $s(K) > n-1$.
Then
\begin{enumerate}[(i)]
\item $K$ has a unique Minkowski center $c_K \in \inte(K)$.
\item there exists an $n$-simplex $T$ containing $c_K$ whose vertices are common boundary points of $K$ and $s(K) (c_K - K) + c_K$.
In particular, if $T = \conv(\{ v^1, \dots, v^{n+1} \})$, then for any $i \in [n+1]$,
\[
    v^i \in \bd(K)
        \quad \text{and} \quad
    w^i := \frac{s(K) + 1}{s(K)} c_K - \frac{1}{s(K)} v^i \in \bd(K).
\]
\item any $n$-simplex $T$ as in (ii) with (Minkowski) center $c_T \in \inte(T)$ satisfies
\[
    c_K
    \in c_T - \frac{n - s(K)}{s(K) + 1} (T - c_T).
\]
\end{enumerate}
\end{proposition}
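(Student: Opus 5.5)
The plan is to derive everything from the first-order optimality condition for the Minkowski asymmetry. Fix a Minkowski center $c$ and write $s = s(K)$, so that $K \subset K_c := c + s(c-K)$. The condition I will use (standard, obtainable by a Helly/separation argument or via the known characterization of optimal containment under homothety) is this. There are common boundary points $v^1,\dots,v^k \in \bd(K) \cap \bd(K_c)$ and outer normals $u^j$ of a common supporting hyperplane of $K$ and $K_c$ at $v^j$ such that $0 \in \conv(\{u^1,\dots,u^k\})$, with $2 \le k \le n+1$. Combined with the known bound that $s(K) \le k-1$ whenever $k$ such touching pairs suffice, the hypothesis $s > n-1$ forces $k = n+1$. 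It also forces the normals $u^j$ to have $0$ in the interior of their convex hull, hence to span $\R^n$, and the $v^j$ to be affinely independent. I expect this step to be the main obstacle: it is where the real content of Klee's theorem lies. If needed I would prove $s \le k-1$ directly, by showing that $K$ lies in the cone-like region cut out by the $k$ halfspaces $\{x : (u^j)^T x \le (u^j)^T v^j\}$ together with the reflected halfspaces, and then applying Gr\"unbaum-type estimates in the $(k-1)$-dimensional quotient.

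For (ii), set $T = \conv(\{v^1,\dots,v^{n+1}\})$. The points $v^i \in \bd(K)$ are common boundary points by construction. Since $v^i \in \bd(K_c)$ and $K_c$ is the image of $K$ under the homothety $x \mapsto (s+1)c - s x$, the preimage $w^i = \frac{s+1}{s}c - \frac1s v^i$ lies in $\bd(K)$. Moreover, the hyperplane with normal $u^i$ supporting $K_c$ at $v^i$ corresponds to a hyperplane with normal $-u^i$ supporting $K$ at $w^i$. That $c \in T$ will follow from (iii), which gives nonnegative barycentric coordinates; the real point is that $c$ lies in the affine hull with coordinates summing to $1$, which holds since $T$ is full-dimensional.

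For (i), take two Minkowski centers $c$ and $c + d$, and apply the contact structure at $c$. The support values satisfy $h_{K}(u^j) = h_{K_c}(u^j) = (s+1)(u^j)^T c + s\, h_K(-u^j)$. Containment $K \subset K_{c+d}$ requires $h_K(u^j) \le h_{K_{c+d}}(u^j) = h_{K_c}(u^j) + (s+1)(u^j)^T d$, so $(u^j)^T d \ge 0$ for all $j$. Since $0 = \sum \mu_j u^j$ with $\mu_j > 0$, all $(u^j)^T d = 0$, and spanning gives $d = 0$. Interiority $c \in \inte(K)$ follows because $c$ is the center of a homothety with factor $s > 0$ mapping $K$ into a body containing it, or directly from (iii).

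For (iii), first rewrite the target. Using $\lambda_i$ for the barycentric coordinates of $c_K$ in $T$, the set $c_T - \alpha(T - c_T)$ with $\alpha = \frac{n-s}{s+1}$ is exactly $\{\lambda_i \le \frac{1+\alpha}{n+1} = \frac{1}{s+1}\ \forall i\}$. Fix $i$ and consider the affine function $f(x) = (u^i)^T(x - c_K)$. The hyperplane at $v^i$ supports $K$, so $f(v^i) = a := \max_K f$, and $a > 0$ since $c_K \in \inte(K)$. The same normal supports $K$ from below at $w^i$, so $\min_K f = f(w^i) = -a/s$, and hence $f(v^j) \ge -a/s$ for all $j$. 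Then $0 = f(c_K) = \sum_j \lambda_j f(v^j) \ge \lambda_i a - (1-\lambda_i)\frac{a}{s}$, which yields $\lambda_i \le \frac{1}{s+1}$. This inequality actually needs only the existence of the support structure from (ii), so it applies to any simplex $T$ as in (ii).
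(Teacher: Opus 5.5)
\textbf{There is a genuine gap in (ii).} Your uniqueness argument in (i) is fine once the contact structure is in place. Your computation in (iii) is also fine for (iii) as stated, because there ``$T$ as in (ii)'' contains $c_K$ by hypothesis. The gap is exactly where Klee's theorem has its content. First, you assert that $s>n-1$ forces the touching points $v^1,\dots,v^{n+1}$ to be affinely independent, but you give no argument. Second, you obtain $c\in T$ ``from (iii)'', which is circular. In (iii), the step $\sum_{j\neq i}\lambda_j f(v^j)\ge -(1-\lambda_i)\frac{a}{s}$ uses $\lambda_j\ge 0$ for $j\neq i$, so it already presupposes $c\in T$; a negative $\lambda_j$ reverses that term's inequality. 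So (iii) cannot be used to produce a simplex containing $c_K$.

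\textbf{The step you flagged is easy.} The bound $s\le k-1$ is not the real obstacle. Normalize $h_{K-c}(u^i)=1$, so that $h_{K-c}(-u^i)=1/s$, and write $0=\sum_i\mu_iu^i$ with $\mu_i\ge0$ and $\sum_i\mu_i=1$. Your (iii) computation, run with the weights $\mu_i$ (which are known to be nonnegative), gives $0=\sum_i\mu_i(u^i)^T(v^j-c)\ge\mu_j-\frac{1-\mu_j}{s}$. Hence $\mu_j\le\frac1{s+1}$, and summing gives $s\le k-1$. Carath\'eodory then yields $\mu_j>0$ and that the $u^i$ span $\R^n$.

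\textbf{How to close the gap.} The same weights supply the missing idea. Set $N_{ij}:=(u^i)^T(v^j-c)+\frac1s\ge0$. Then $N_{ii}=\frac{s+1}{s}$ and $\sum_i\mu_iN_{ij}=\frac1s$ for every $j$. Dropping nonnegative terms gives $\mu_iN_{ij}\le\frac1s-\frac{s+1}{s}\mu_j$ for $j\neq i$. Summing over $j\neq i$ yields
$\mu_i\sum_{j\neq i}N_{ij}\le\frac{s+1}{s}\mu_i-\frac{s+1-n}{s}<\frac{s+1}{s}\mu_i$.
So $N$ is strictly diagonally dominant by rows, hence nonsingular. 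This gives both missing claims:
\begin{enumerate}
\item[(a)] \emph{Affine independence.} A dependence $\sum_j\nu_jv^j=0$ with $\sum_j\nu_j=0$ would give $N\nu=0$. So the $v^j$ are affinely independent.
\item[(b)] \emph{$c\in\inte(T)$.} The barycentric coordinates $\lambda$ of $c$ with respect to $T$ satisfy $N\lambda=\frac1s\mathbf{1}$. Write $N=\frac{s+1}{s}(I+G)$, where $G\ge0$ has zero diagonal and all row sums at most some $\gamma<1$. Comparing the rows belonging to a minimal and a maximal entry of $\lambda$ rules out $\min_j\lambda_j<0$, and then gives $\min_j\lambda_j\ge\frac{1-\gamma}{s+1}>0$.
\end{enumerate}
Only after (b) is established does your (iii) computation apply to the simplex you constructed.
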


We call an $n$-simplex like the set $T$ described in the proposition an \cemph{asymmetry point simplex} of $K$.
Our goal is to show how homothets of such a simplex can be used to cover $K$ to obtain Theorem~\ref{thm:main}.
Before we go into the details of our method, let us briefly discuss an alternative approach.

\begin{remark}
The common boundary points of $K$ and $s(K) (c_K - K) + c_K$ also induce common supporting hyperplanes of the two bodies.
When $K$ is an $n$-simplex, these hyperplanes give back the simplex $s(K) (c_K - K) + c_K$.
It is therefore a natural alternative option to form an approximating simplex from these hyperplanes instead of from the common boundary points.
In essence, this approach is pursued in \cite{2026:BakaevYehudayoff},
where it leads to a slightly weaker bound than the one stated in Theorem~\ref{thm:main}.
Since the main estimate in \cite{2026:BakaevYehudayoff} in this context is sharp (see the remark after the proof there) and Example~\ref{ex:asymmetry_simplex_tightness} shows that also our approach based on asymmetry point simplices cannot be improved, possible further strengthenings of Theorem~\ref{thm:main} might require to involve both types of simplices.
\end{remark}

Let us outline our approach to proving Theorem~\ref{thm:main}.
Asymmetry point simplices already provide an inner approximation by simplices for $K$.
For the outer approximation,
it remains to show how large such a simplex must be scaled to contain a given set up to translation,
and how this can be controlled specifically for $K$ itself.
The former is comparatively easy and taken care of by Lemma~\ref{lem:simplex_circumradius}.
For the latter, we rely on the known boundary points $v^i$ and $w^i$ of $K$ from Proposition~\ref{prop:böröczky} to construct a superset of $K$.
The natural way of doing so results in a description of the superset through the exclusion of certain polyhedra, see Lemma~\ref{lem:set_bound1}.
However, such a description is rather impractical to work with.
The majority of the technical work afterward therefore focuses on the conversion of this set description in terms of exclusion of points
to a description in terms of inclusion of points.
The main observations used for this process throughout the section are outlined once directly relevant.

The following lemma takes care of the rescaling of simplices to cover given sets up to translation.
It is essentially a special case of \cite[Proposition~$6$]{2026:BryantTupper}, though the original result is formally stated only for finite sets $X$.
The extension to compact sets can be taken from the general discussions in the preprint \cite{2026:BryantTupper}, or in the way outlined below.
Let us point out that \cite[Lemma~$12$]{2026:BakaevYehudayoff} also essentially establishes this lemma.

\begin{lemma}\label{lem:simplex_circumradius}
Let $X \subset \R^n$ be non-empty and compact,
and let $S = \bigcap_{i \in [n+1]} H_{(a^i,1)}^\leq$
be an $n$-simplex centered at the origin
for some affinely independent vectors $a^1, \dots, a^{n+1} \in \R^n \setminus \{0\}$.
Then
\[
    R(X,S)
    := \inf \{ \rho \geq 0 : X \subset u + \rho S, u \in \R^n \}
    = \sum_{i \in [n+1]} \frac{h_X(a^i)}{n+1}.
\]
\end{lemma}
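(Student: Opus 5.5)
The plan is to reduce containment in a translate of $\rho S$ to $n+1$ scalar inequalities, one per facet normal $a^i$. The normalization ``centered at the origin'' will be used exactly once, in the form
\[
    \sum_{i \in [n+1]} a^i = 0.
\]

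\textbf{Step 1 (normalization).} First I justify $\sum_i a^i = 0$. Since $S$ is bounded, the $a^i$ positively span $\R^n$. Hence there is a unique (up to scaling) relation $\sum_i \lambda_i a^i = 0$ with all $\lambda_i > 0$, and any $n$ of the $a^i$ are linearly independent. Let $v^j$ be the vertex opposite the facet with normal $a^j$, so that $(a^i)^T v^j = 1$ for $i \neq j$. Applying the relation to $v^j$ gives
\[
    (a^j)^T v^j = -\frac{1}{\lambda_j} \sum_{k \neq j} \lambda_k .
\]
Because the Minkowski center and the centroid of a simplex coincide, we have $\sum_j v^j = 0$. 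Pairing this with $a^i$ gives $n + (a^i)^T v^i = 0$, i.e.\ $(a^i)^T v^i = -n$ for every $i$. Combining the two expressions yields $\sum_{k \neq j} \lambda_k = n \lambda_j$ for all $j$. This forces all $\lambda_j$ to be equal, so $\sum_i a^i = 0$.

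\textbf{Step 2 (reformulation).} For $\rho \geq 0$ and $u \in \R^n$, the set $u + \rho S$ equals $\bigcap_i H^{\leq}_{(a^i,\, (a^i)^T u + \rho)}$. Therefore $X \subset u + \rho S$ holds if and only if
\[
    h_X(a^i) \leq (a^i)^T u + \rho \quad \text{for all } i \in [n+1].
\]
The support values are finite since $X$ is compact.

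\textbf{Step 3 (lower bound).} Suppose some $u$ satisfies the inequalities of Step 2. Summing them over $i$ and using $\sum_i a^i = 0$ gives $\sum_i h_X(a^i) \leq (n+1)\rho$. Hence $R(X,S)$ is at least the claimed average.

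\textbf{Step 4 (upper bound, attained).} Set $\rho^* := \frac{1}{n+1}\sum_i h_X(a^i)$. I look for $u$ with
\[
    (a^i)^T u = h_X(a^i) - \rho^* \quad \text{for all } i.
\]
Since $a^1,\dots,a^n$ are linearly independent, the first $n$ equations have a solution $u$. The remaining equation then holds automatically. Indeed, its left side is $-\sum_{i \le n} (a^i)^T u$ because the $a^i$ sum to zero. Its right side equals $-\sum_{i \le n} \bigl(h_X(a^i) - \rho^*\bigr)$, since the right-hand sides sum to zero by the choice of $\rho^*$.

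It remains to check $\rho^* \geq 0$. Take any $x \in X$; then $h_X(a^i) \geq (a^i)^T x$ for each $i$, and summing gives $\sum_i h_X(a^i) \geq 0$. Thus the infimum equals $\rho^*$ and is attained.

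The only step that is not mechanical is Step 1, i.e.\ translating the centering hypothesis into $\sum_i a^i = 0$. Everything else is linear algebra.
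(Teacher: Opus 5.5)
Your proof is correct, but it takes a different route from the paper. The paper first uses a Helly-type argument to reduce to subsets $Y \subset X$ with at most $n+1$ points. It then cites Proposition~6 of Bryant--Tupper, which is stated for finite sets, after checking $\sum_i a^i = 0$. So the external result does the real work, and the passage from finite to compact $X$ needs that extra reduction.

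You instead note that $X \subset u + \rho S$ is equivalent to the $n+1$ scalar conditions $h_X(a^i) \le (a^i)^T u + \rho$. For $\rho = 0$ this description still holds because $S$ is bounded. The lower bound is then just the sum of these conditions. The upper bound comes from solving an $n \times n$ linear system whose last equation holds automatically, because $\sum_i a^i = 0$ and the right-hand sides sum to zero.

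Your approach buys several things. It makes the lemma self-contained and shows the infimum is attained. It exhibits the optimal translate explicitly. It also shows that compactness of $X$ is only used to make $h_X$ finite. The paper's route buys mainly the link to the existing literature on circumradii with respect to simplices.

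Your Step~1 is valid but more roundabout than needed. Once you have $(a^i)^T v^i = -n$ and $(a^i)^T v^j = 1$ for $i \neq j$, pairing $\sum_i a^i$ with each vertex gives $n - n = 0$. Since the vertices span $\R^n$, this already yields $\sum_i a^i = 0$; this is exactly the paper's argument. The positive dependence $\sum_i \lambda_i a^i = 0$ is therefore unnecessary. The linear independence of any $n$ of the $a^i$, which you use in Step~4, also follows directly: each vertex is the single point where the $n$ facet hyperplanes through it meet.
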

\begin{proof}
It is a well-known consequence of Helly's theorem (see, e.g., \cite[Lemma~$2.2$]{2013:BrandenbergKönig}) that there exists some subset $Y \subset X$ with at most $n+1$ elements and $R(Y,S) = R(X,S)$.
We can therefore determine $R(X,S)$ by maximizing $R(Y,S)$ over the finite subsets $Y$ of $X$.

To apply \cite[Proposition~$6$]{2026:BryantTupper}, we note that $\sum_{i \in [n+1]} a^i = 0$.
Indeed, let $x \in \R^n$ be a vertex of $S$.
Then $x$ has inner product $1$ with $n$ of the $a^i$.
Since $S$ has its center at $0$,
the inner product of $x$ with the remaining $a^i$ is $-n$.
It follows that $x^T \sum_{i \in [n+1]} a^i = 0$.
Since this is true for all vertices of $S$ and they linearly span $\R^n$,
we must have $\sum_{i \in [n+1]} a^i = 0$.

We can now apply \cite[Proposition~$6$]{2026:BryantTupper} to find for any finite subset $Y$ of $X$ that
\[
    R(Y,S)
    = \sum_{i \in [n+1]} \frac{h_Y(a^i)}{n+1}
    \leq \sum_{i \in [n+1]} \frac{h_X(a^i)}{n+1}.
\]
Moreover, if for every $i \in [n+1]$ there is some $y^i \in Y$ with $(a^i)^T y^i = h_X(a^i)$, then equality holds.
This establishes the lemma.
\end{proof}

We shall now focus on obtaining a superset of $K$ with a workable description.
Doing so is a fairly long and technical process.
We therefore split it into multiple lemmas over the course of this section, 
throughout which we let everything be defined as in Proposition~\ref{prop:böröczky}.
Moreover, we reuse any notation introduced in those lemmas throughout the rest of the section as well.
Let us already
collect some additional notation and observations:

Since Theorem~\ref{thm:main} is trivially true for $n = 1$, we may suppose that $n \geq 2$.
For brevity, we write $s := s(K)$ and $\varepsilon := n - s \in [0,1)$.
By Proposition~\ref{prop:böröczky}~(iii),
there exist $\lambda_1, \dots, \lambda_{n+1} \geq 0$ with $\sum_{\ell \in [n+1]} \lambda_\ell = \varepsilon$ and
\begin{equation}\label{eq:c_K}
	c_K
	= c_T - \sum_{\ell \in [n+1]} \frac{\lambda_\ell}{s+1} (v^\ell - c_T)
	= \frac{n+1}{s+1} c_T - \sum_{\ell \in [n+1]} \frac{\lambda_\ell}{s+1} v^\ell
	= \sum_{\ell \in [n+1]} \frac{1 - \lambda_\ell}{s+1} v^\ell,
\end{equation}
where we used $(n+1) c_T = \sum_{\ell \in [n+1]} v^\ell$ in the last step.
Since $\frac{n-s}{s+1} < \frac{1}{n}$,
\begin{equation}\label{eq:c_K_int(T)}
	c_K
	\in c_T - \frac{n-s}{s+1} (T - c_T)
	\subset \inte \bigg( c_T - \frac{1}{n} (T - c_T) \bigg)
	\subset \inte(T),
\end{equation}
and consequently for $W := \conv(\{ w^i : i \in [n+1] \})$ also
\begin{equation}\label{eq:c_K_int(W)}
	c_K
		\in c_K + \frac{1}{s} ( c_K - \inte(T) )
		= \inte(W).
\end{equation}
For $i,j \in [n+1]$, we denote by
\[
	m(i,j) \in \argmax \{ \lambda_k : k \in [n+1] \setminus \{i,j\} \}
\]
a choice of index maximizing $\lambda_k$ over $k \in [n+1] \setminus \{i,j\}$.
Moreover, we fix some choices
\[
	m_1 \in \argmax \{ \lambda_k : k \in [n+1] \} \quad \text{and} \quad m_2 \in \argmax \{ \lambda_k : k \in [n+1] \setminus \{m_1\} \}.
\]
Lastly, when $\alpha \in [0, \infty)$ and $\beta \in \R$,
the map $\R \setminus \{1 + \beta\} \ni x \mapsto \frac{\alpha (1 - x)}{1 - x + \beta}$ is differentiable with derivative
\begin{equation}\label{eq:monotone_quotient}
	\frac{\alpha ( (1-x) - (1 - x + \beta) )}{(1 - x + \beta)^2} = \frac{- \alpha \beta}{(1 - x + \beta)^2}
\end{equation}
and is thus increasing for $\beta \leq 0$ and decreasing for $\beta \geq 0$.

The following lemma covers the announced first step of finding a superset of $K$, though in the impractical exclusion form.
All following steps provide set descriptions via inclusion of points either for precisely this set,
or for very slightly larger sets to avoid unnecessary technical difficulties.

\begin{lemma}\label{lem:set_bound1}
We have
\[
	K \subset \R^n \setminus \Bigg( \bigcup_{i \in [n+1]} \big( w^i - \inte(\pos( T - w^i )) \big) \cup \big( v^i + \inte(\pos( T - w^i )) \big) \Bigg).
\]
\end{lemma}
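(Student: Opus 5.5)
The plan is to show that $K$ misses each of the two open cones separately, using only convexity of $K$, the inclusion $T \subset K$ (the $v^j$ lie in $K$), and the fact that $v^i$ and $w^i$ are boundary points of $K$. By the relation in Proposition~\ref{prop:böröczky}~(ii), $w^i$, $c_K$ and $v^i$ are collinear, with $c_K$ strictly between them. Also $c_K \in \inte(T)$ by \eqref{eq:c_K_int(T)}. Since $w^i$ lies on the ray from $v^i$ through $c_K$, beyond $c_K$, we get $w^i \notin T$. More importantly, $T - w^i$ contains a neighborhood of the point $c_K - w^i$, so the cone $\pos(T - w^i)$ has $c_K - w^i$ in its interior. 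This cone is pointed and convex.

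\textbf{First cone (around $w^i$).} Suppose some $x \in K$ satisfies $x \in w^i - \inte(\pos(T - w^i))$, i.e.\ $w^i - x \in \inte(\pos(T - w^i))$. Then $w^i$ lies in the interior of the cone $x + \pos(T - x)$, which is the cone spanned from $x$ by $\conv(\{x\} \cup T) \subset K$. Concretely, pick a small ball $B$ around $w^i - x$ inside $\inte(\pos(T - w^i))$. Each point of $w^i + (B - (w^i - x))$ should be written as a convex combination of $x$ and points of $T$. Then $w^i$ would be an interior point of $\conv(\{x\} \cup T) \subset K$, contradicting $w^i \in \bd(K)$.

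The step to make precise is the following. If $d := w^i - x \in \inte(\pos(T - w^i))$, write $d = \mu (t - w^i)$ with $t \in \inte(T)$ and $\mu > 0$; interior points of the cone admit such representations. Then
\[
w^i = \frac{1}{1+\mu} x + \frac{\mu}{1+\mu} t.
\]
This is a convex combination of $x \in K$ and an interior point $t$ of $T \subset K$, with positive weight on $t$, so $w^i \in \inte(K)$.

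\textbf{Second cone (around $v^i$).} Suppose $x \in K$ with $x - v^i = \mu (t - w^i)$ for some $t \in \inte(T)$ and $\mu > 0$. We use the homothety $h(y) := c_K + s(c_K - y)$, which maps $K$ into the set $c_K + s(c_K - K) \supset K$ and sends $w^i \mapsto v^i$. Applying it gives
\[
h(x) = v^i - s\mu (t - w^i).
\]
The cleanest route is to exploit collinearity directly instead. The point $w^i - (x - v^i)/\mu' $ should be expressed so as to place $v^i$ in the interior of $\conv(\{x, w^i\} \cup T)$. Since $t - w^i$ points from $w^i$ into $T$, and $v^i - w^i$ is a positive multiple of $c_K - w^i$, I will write
\[
v^i = \alpha x + \beta w^i + \gamma t' ,
\]
with $\alpha, \gamma > 0$, $\beta \ge 0$, $\alpha + \beta + \gamma = 1$ and $t' \in \inte(T)$, by solving a small two-dimensional problem in the plane spanned by $x - v^i$ and $c_K - v^i$. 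This again gives $v^i \in \inte(K)$, a contradiction.

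I expect this decomposition to be the main obstacle. If it gets messy, the fallback is the homothety argument. The points $h(x)$ and $h(t)$ lie in $c_K + s(c_K - K)$, while $v^i = h(w^i)$ is a boundary point of $c_K + s(c_K - K)$. Applying the first-cone argument to the body $c_K + s(c_K - K)$ then shows $v^i$ is interior to it, which is a contradiction. This works because the roles are symmetric: in $c_K + s(c_K - K)$, the contact point $v^i$ plays the role of $w^i$, and $h(T) \subset c_K + s(c_K - K)$ replaces $T$, with $\pos(h(T) - v^i) = -\pos(T - w^i)$.
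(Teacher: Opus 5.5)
Your first-cone argument is exactly the paper's: writing $w^i - x = \mu(t - w^i)$ with $t \in \inte(T)$ shows $w^i$ is a convex combination of $x \in K$ and an interior point of $K$. Your side claims that $w^i \notin T$ and that $\pos(T - w^i)$ is pointed are false whenever $\lambda_i = 0$; for $K = T$, for instance, $w^i$ is the centroid of the facet opposite $v^i$ and $\pos(T - w^i)$ is a halfspace. You never use these claims, so they do no harm.

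The gap is in the second cone. Your primary plan needs a decomposition $v^i = \alpha x + \beta w^i + \gamma t'$ with $\gamma > 0$ and $t' \in \inte(T)$, and such a decomposition does not exist in general. Take $x = v^i + \delta(v^i - v^j)$ with $j \neq i$ and $\delta > 0$; it lies in $v^i + \inte(\pos(T - w^i))$ by \eqref{eq:smaller_vi_cone}. Pick a nonzero $a$ normal to $T$ along the edge $[v^i, v^j]$, so that $a^T v^i = a^T v^j = h_T(a)$. Then:
\begin{itemize}
\item $a^T x = a^T v^i$;
\item $a^T t' < a^T v^i$, since $t' \in \inte(T)$;
\item $a^T w^i \leq a^T v^i$, since by \eqref{eq:c_K} the point $w^i$ has barycentric coordinate $-\lambda_i/s \leq 0$ at $v^i$ and positive coordinates elsewhere.
\end{itemize}
Applying $a$ to the decomposition then gives $a^T v^i < a^T v^i$, a contradiction. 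So convexity with $T \cup \{w^i\}$ only recovers the weaker exclusion \eqref{eq:additional_cutoff}; the inclusion $K \subset c_K + s(c_K - K)$ cannot be avoided here.

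Your fallback does use that inclusion and is the right idea, but as written it does not go through.
\begin{itemize}
\item Your formula for $h(x)$ is wrong. In fact $h(x) = h(v^i) - s\mu(t - w^i)$, where $h(v^i) = (s+1)c_K - s v^i \neq v^i$.
\item $h(x)$ is the wrong point to feed into the first-cone argument for $K' := c_K + s(c_K - K)$. Even with your formula, $h(x) = v^i + \mu(h(t) - v^i)$ lies on the ray from $v^i$ toward $h(t)$, so no contradiction arises.
\end{itemize}
The right point is $x$ itself, which lies in $K \subset K'$. Since $h(t) - v^i = -s(t - w^i)$, you get $v^i = \frac{s}{s+\mu} x + \frac{\mu}{s+\mu} h(t)$ with $h(t) \in \inte(h(T)) \subset \inte(K')$. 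This contradicts $v^i = h(w^i) \in \bd(K')$.

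With this correction, your fallback is the paper's proof transported by $h$. The paper instead applies $h^{-1}(y) = c_K + \frac{1}{s}(c_K - y)$, which maps $K$ into $K$ and sends $v^i$ to $w^i$. Then $h^{-1}(x) = w^i - \frac{\mu}{s}(t - w^i)$ lies in $K \cap (w^i - \inte(\pos(T - w^i)))$, which contradicts the first part directly, with no second body needed.
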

\begin{proof}
Clearly, $T \subset K$ implies $\inte(T) \subset \inte(K)$.
Thus, $w^i \in \bd(K)$ for any $i \in [n+1]$ shows by the convexity of $K$ for any $x \in \inte(T)$ that
\[
	K \cap \big( x + [0,\infty) (w^i - x) \big)
    = [x,w^i].
\]
Now,
any $y \in w^i - \inte(\pos( T - w^i )) = w^i - (0,\infty) (\inte(T) - w^i)$ for $i \in [n+1]$
can be expressed as
\[
	y
    = w^i - \rho (x - w^i)
    = x + (1 + \rho) (w^i - x)
\]
for some $x \in \inte(T)$ and $\rho > 0$ and, hence, does not lie in $K$.
Lastly, we observe that for any $x \in K$, the inclusion $\frac{1}{s} (c_K - K) + c_K \subset K$ shows

\[
	z
    := \frac{s + 1}{s} c_K - \frac{1}{s} x
	= \frac{1}{s} (c_K - x) + c_K
	\in K.
\]
Now, having additionally $x \in v^i + \inte(\pos( T - w^i ))$ for some $i \in [n+1]$ would imply
\[
	z \in \frac{s + 1}{s} c_K - \frac{1}{s} v^i - \frac{1}{s} \inte(\pos( T - w^i ))
		= w^i - \inte(\pos( T - w^i )),
\]
which is not possible by $z \in K$ and the first part of the proof.
\end{proof}

\begin{remark}
Notice that we could use the first argument in the above proof to show also
\begin{equation}\label{eq:additional_cutoff}
	K \cap \big( v^i - \inte(\pos( T - v^i )) \big)
   	= \emptyset
\end{equation}
for any $i \in [n+1]$.
However, we claim that
\begin{equation}\label{eq:smaller_vi_cone}
	(- \pos( T - v^i )) \setminus \{ 0 \}
		= \pos(v^i - T) \setminus \{ 0 \}
		\subset \inte(\pos( T - w^i )),
\end{equation}
which then implies that \eqref{eq:additional_cutoff} does not improve the statement of the above lemma.
Since we require \eqref{eq:smaller_vi_cone} later on,
we provide a proof,
for which it suffices to verify that $v^i - v^j \in \inte(\pos(T - w^i))$ for all $j \in [n+1] \setminus \{ i \}$.

Since $\lambda_i + \lambda_j \leq \varepsilon < 1$,
there exists some $\rho > 0$ with
\[
	\frac{s}{1 - \lambda_j} < \rho
        \quad \text{and} \quad
    \rho \lambda_i < s.
\]
Indeed, this is trivially true if $\lambda_i = 0$,
and otherwise follows from $\frac{s}{1 - \lambda_j} < \frac{s}{\lambda_i}$.
For such a choice of $\rho$, we define
$\rho_i := 1 - \frac{\rho \lambda_i}{s}$,
$\rho_j := \frac{\rho (1 - \lambda_j)}{s} - 1$,
and $\rho_k := \frac{\rho (1 - \lambda_k)}{s}$ for $k \in [n+1] \setminus \{i,j\}$.
The defining properties of $\rho$ yield $\rho_1, \dots, \rho_{n+1} > 0$.
Moreover,
\[
	\sum_{\ell \in [n+1]} \rho_\ell
	= - \frac{\rho \lambda_i}{s} - \frac{\rho (1 - \lambda_i)}{s} + \sum_{\ell \in [n+1]} \frac{\rho (1 - \lambda_\ell)}{s}
	= - \frac{\rho}{s} + \frac{\rho (n + 1 - \varepsilon)}{s}
	= \rho
\]
and, by \eqref{eq:c_K},
\begin{align*}
	\sum_{\ell \in [n+1]} \rho_\ell (v^\ell - w^i)
	& = \bigg( 1 - \frac{\rho \lambda_i}{s} \bigg) v^i - \frac{\rho (1 - \lambda_i)}{s} v^i - v^j + \sum_{\ell \in [n+1]} \bigg( \frac{\rho (1 - \lambda_\ell)}{s} v^\ell - \rho_\ell w^i \bigg)
    \\
	& = v^i - v^j + \rho \bigg( \frac{s+1}{s} c_K - \frac{1}{s} v^i \bigg) - \rho w^i
	= v^i - v^j.
\end{align*}
Since
\[
    \sum_{\ell \in [n+1]} (0,\infty) (v^\ell - w^i)
    = (0,\infty) \inte(T - w^i)
    = \inte(\pos( T - w^i )),
\]
we conclude that $v^i - v^j \in \inte(\pos( T - w^i ))$ and therefore that \eqref{eq:smaller_vi_cone} is true.
\end{remark}

Obtaining the desired interior description of the superset in Lemma~\ref{lem:set_bound1} proceeds in two steps,
one for each type of polyhedra excluded in the lemma.
We start with the first type, where we regroup the hyperplanes inducing the boundaries of the polyhedra to obtain the simplices introduced in the following lemma.
Over the course of the next three lemmas, we show that the union of these simplices is precisely the set arising from excluding all polyhedra of the first type.

\begin{lemma}\label{lem:spikes}
For $i \in [n+1]$, define
\begin{equation}\label{eq:z^i}
	z^i
	:= v^i + \frac{1}{1 - \varepsilon} \Bigg( \varepsilon v^i - \sum_{\ell \in [n+1]} \lambda_\ell v^\ell \Bigg)
	\in v^i + \frac{\varepsilon}{1 - \varepsilon} (v^i - T)
	\subset v^i + \pos(v^i - T)
\end{equation}
and $T^i := \conv(\{ z^i, v^j : j \in [n+1] \setminus \{i\} \})$.
Then
\begin{enumerate}[(i)]
\item $T^i$ is an $n$-simplex with $T \subset T^i$ and $c_K \in \inte(T^i)$.
\item for $j \in [n+1] \setminus \{i\}$, $w^j$ lies in the relative interior of a facet $F^i_j$ of $T^i$, where $F^i_j$ is induced by the hyperplane
$H^i_j := w^j + \lin(\{ w^j-v^k : k \in [n+1] \setminus \{i,j\} \})$.
\end{enumerate}
\end{lemma}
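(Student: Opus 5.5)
The plan is to work throughout in barycentric coordinates with respect to $T=\conv(\{v^1,\dots,v^{n+1}\})$ and to rewrite everything in barycentric coordinates with respect to $T^i$. The only input needed is the representation \eqref{eq:c_K} of $c_K$ together with $\sum_\ell \lambda_\ell=\varepsilon$ and $s=n-\varepsilon$.

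\emph{Barycentric coordinates of $z^i$ and invertibility.} From \eqref{eq:z^i}, $z^i$ has barycentric coefficient $a_i:=1+\frac{\varepsilon-\lambda_i}{1-\varepsilon}=\frac{1-\lambda_i}{1-\varepsilon}$ on $v^i$. Its coefficient on $v^\ell$ for $\ell\neq i$ is $-\frac{\lambda_\ell}{1-\varepsilon}$. These coefficients sum to $1$. Since $a_i>0$ (because $\lambda_i\le\varepsilon<1$), replacing $v^i$ by $z^i$ keeps the point set affinely independent, so $T^i$ is an $n$-simplex. Solving for $v^i$ gives
\[
v^i=\frac{1-\varepsilon}{1-\lambda_i}\,z^i+\sum_{\ell\neq i}\frac{\lambda_\ell}{1-\lambda_i}\,v^\ell .
\]
This is a convex combination: the weights are nonnegative and sum to $\frac{1-\varepsilon+\varepsilon-\lambda_i}{1-\lambda_i}=1$. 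Hence $v^i\in T^i$, and therefore $T\subset T^i$. The inclusion $z^i\in v^i+\frac{\varepsilon}{1-\varepsilon}(v^i-T)$ in \eqref{eq:z^i} follows directly, because $\frac1\varepsilon\sum_\ell\lambda_\ell v^\ell\in T$.

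\emph{$c_K$ in the interior.} I would substitute the displayed expression for $v^i$ into \eqref{eq:c_K}. The coefficient of $z^i$ becomes $\frac{1-\lambda_i}{s+1}\cdot\frac{1-\varepsilon}{1-\lambda_i}=\frac{1-\varepsilon}{s+1}$. For $\ell\neq i$, the coefficient of $v^\ell$ becomes $\frac{1-\lambda_\ell}{s+1}+\frac{\lambda_\ell}{s+1}=\frac1{s+1}$. This yields the clean formula
\[
c_K=\frac{1}{s+1}\Big((1-\varepsilon)z^i+\sum_{\ell\neq i}v^\ell\Big).
\]
All coefficients are strictly positive, so $c_K\in\inte(T^i)$.

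\emph{Proof of (ii).} For $j\neq i$, I would insert the last formula into $w^j=\frac{s+1}{s}c_K-\frac1s v^j$. This gives
\[
w^j=\frac1s\Big((1-\varepsilon)z^i+\sum_{\ell\neq i,j}v^\ell\Big),
\]
whose coefficients sum to $\frac{1-\varepsilon+n-1}{s}=1$. The coefficient on $v^j$ vanishes and all others are strictly positive. Hence $w^j$ lies in the relative interior of the facet $F^i_j:=\conv(\{z^i,v^k:k\neq i,j\})$ of $T^i$, namely the facet opposite $v^j$. It remains to identify $\aff(F^i_j)$ with $H^i_j$. The $n$ points $w^j$ and $v^k$ for $k\in[n+1]\setminus\{i,j\}$ all lie in $F^i_j$. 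They are affinely independent, since $w^j$ has a nonzero $z^i$-coefficient while the $v^k$ do not, and the $v^k$ are themselves independent. Thus they span the $(n-1)$-dimensional affine hull of the facet, and this hull is exactly $w^j+\lin(\{w^j-v^k\})=H^i_j$.

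There is no real obstacle here. The only point needing care is the bookkeeping in the substitution that produces the formula for $c_K$ in $T^i$-coordinates, since everything else follows from it by inspection.
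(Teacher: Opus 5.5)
Your proof is correct. Part (ii) is the same computation as in the paper: both write $w^j=\frac1s\big((1-\varepsilon)z^i+\sum_{k\ne i,j}v^k\big)$ and read off the relative-interior claim. Your affine-independence argument for $w^j$ and the $v^k$, $k\ne i,j$, just fills in the paper's remark that $\aff(F^i_j)=H^i_j$ follows immediately.

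For (i), the routes differ slightly.
\begin{itemize}
\item The paper argues geometrically. From $z^i\in v^i+\pos(v^i-T)$ it finds a point $y^i$ in the facet of $T$ opposite $v^i$ with $v^i\in[y^i,z^i]$, so $T\subset T^i$. It then inherits $c_K\in\inte(T^i)$ from $c_K\in\inte(T)$ in \eqref{eq:c_K_int(T)}. Finally, $T^i$ is a simplex because it is the hull of $n+1$ points with nonempty interior.
\item You get affine independence from the nonzero $v^i$-coefficient of $z^i$. You make the $y^i$-argument explicit by writing $v^i$ as a convex combination of $z^i$ and the other $v^\ell$. The interior claim then comes directly from the strictly positive representation $(s+1)c_K=(1-\varepsilon)z^i+\sum_{\ell\ne i}v^\ell$. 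This does not need \eqref{eq:c_K_int(T)}, and (ii) follows from it in one line.
\end{itemize}
The paper's version is shorter given the observations already established. Yours is more self-contained and shows that this single identity drives the whole lemma.

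One small point: your justification of $z^i\in v^i+\frac{\varepsilon}{1-\varepsilon}(v^i-T)$ divides by $\varepsilon$. The section allows $\varepsilon\in[0,1)$, so treat the trivial case $\varepsilon=0$, where $z^i=v^i$, separately.
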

\begin{proof}
By $z^i \in v^i + \pos(v^i - T) = v^i + \pos(\{ v^i - v^j : j \in [n+1] \setminus \{i\} \})$,
there exists some $y^i \in \conv(\{ v^j : j \in [n+1] \setminus \{i\} \})$ with $v^i \in [y^i,z^i] \subset T^i$.
This already verifies $T \subset T^i$ and thus, by \eqref{eq:c_K_int(T)}, also $c_K \in \inte(T^i)$.
Since $T$ has non-empty interior, $T^i$ is a convex hull of $n+1$ points in $\R^n$ with non-empty interior.
It must therefore be an $n$-simplex, so (i) follows.

Now, for any $j \in [n+1] \setminus \{i\}$,
\eqref{eq:c_K} shows
\[
	\frac{1 - \varepsilon}{s} z^i + \sum_{k \in [n+1] \setminus \{i,j\}} \frac{1}{s} v^k
	= \Bigg( \sum_{\ell \in [n+1]} \frac{1 - \lambda_\ell}{s} v^\ell \Bigg) - \frac{1}{s} v^j
	= \frac{s+1}{s} c_K - \frac{1}{s} v^j
	= w^j.
\]
Since $\frac{1 - \varepsilon}{s} + \frac{n-1}{s} = 1$ and $\varepsilon < 1$,
we conclude that $w^j$ lies in the relative interior of the facet
$F^i_j = \conv(\{ z^i, v^k : k \in [n+1] \setminus \{i,j\} \})$ of $T^i$.
Thus, it also immediately follows $\aff(F^i_j) = H^i_j$.
\end{proof}

Let us outline the main idea to verify that the $T^i$ from above cover what is left after excluding the first type of polyhedra in Lemma~\ref{lem:set_bound1}.
For any ray starting from $c_K$,
we show that it stays at least as long in the union of the $T^i$ as it stays in the region not excluded by the polyhedra.
This is achieved by considering where the rays meet $\bd(W)$ to decide which of the $T^i$ we want to show membership in,
namely the $T^i$ containing all $w^j$ spanning the facet of $W$ that the ray meets.
The following technical lemma lets us determine how long the ray stays within $T^i$.
This approach can be seen to be the appropriate one by noticing that the lemma shows in particular that the ray meets the boundaries of two of the $T^i$ in the same point whenever it passes through a face of $W$ spanned by any $n-1$ of the $w^j$.
In this sense, the facial structure of $W$ already determines how the different $T^i$ are related (see also Remark~\ref{rem:exact_Ti_description}).

\begin{lemma}\label{lem:lines_meeting_spikes}
Let $i \in [n+1]$, choose some $\sigma_k \in \R$ for all $k \in [n+1] \setminus \{i\}$ with $\sum_{k \in [n+1] \setminus \{i\}} \sigma_k = 1$,
and define the line
\[
	L
	:= c_K + \lin \Bigg( \Bigg\{ \sum_{k \in [n+1] \setminus \{i\}} \sigma_k (w^k - c_K) \Bigg\} \Bigg).
\]
Then for any $j \in [n+1] \setminus \{i\}$, the line $L$
intersects the hyperplane $H^{i}_j$ if and only if $\sigma_j \neq \frac{1}{s+1}$.
In this case, $L \cap H^i_j$ contains precisely the point
\[
	c_K + \frac{s}{(s+1) \sigma_j - 1} \sum_{k \in [n+1] \setminus \{i\}} \sigma_k (w^k - c_K)
		= w^j + \sum_{k \in [n+1] \setminus \{i,j\}} \frac{\sigma_k}{(s+1) \sigma_j - 1} (w^j - v^k).
\]
\end{lemma}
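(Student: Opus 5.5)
The plan is to rewrite everything relative to $c_K$ and reduce the statement to a linear identity in the affinely independent points $w^k$. Put $d := \sum_{k \neq i} \sigma_k (w^k - c_K)$, so $L = c_K + \R d$. We want all $t \in \R$ with $c_K + t d \in H^i_j$.

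The first step is to express $v^k$ through the $w$'s. By definition of $w^k$ we have $v^k = (s+1) c_K - s w^k$, hence
\[
    w^j - v^k = (w^j - c_K) + s (w^k - c_K).
\]
Consequently $H^i_j - c_K$ is the affine space
\[
    (w^j - c_K) + \lin\{ (w^j - c_K) + s(w^k - c_K) : k \neq i,j \}.
\]

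Next we use that $W$ is an $n$-simplex with $c_K \in \inte(W)$ by \eqref{eq:c_K_int(W)}. Therefore any $n$ of the vectors $w^k - c_K$, in particular those with $k \neq i$, form a basis of $\R^n$. Write $e_k := w^k - c_K$ for $k \neq i$. The condition $c_K + t d \in H^i_j$ becomes
\[
    t \sum_{k \neq i} \sigma_k e_k = e_j + \sum_{k \neq i,j} \mu_k (e_j + s e_k)
\]
for some reals $\mu_k$. Comparing coefficients in this basis gives $t \sigma_k = s \mu_k$ for $k \neq i,j$, and $t \sigma_j = 1 + \sum_{k \neq i,j} \mu_k$.

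Substituting $\mu_k = t \sigma_k / s$ and using $\sum_{k \neq i,j} \sigma_k = 1 - \sigma_j$, the second equation reads $t \sigma_j = 1 + t (1 - \sigma_j)/s$. Equivalently,
\[
    t \bigl( (s+1) \sigma_j - 1 \bigr) = s .
\]
This equation is solvable, and then uniquely, exactly when $\sigma_j \neq \frac{1}{s+1}$, with $t = \frac{s}{(s+1)\sigma_j - 1}$. This gives the first form of the intersection point. The second form follows by reading off the representation $w^j + \sum_{k \neq i,j} \mu_k (w^j - v^k)$ with $\mu_k = \frac{t \sigma_k}{s} = \frac{\sigma_k}{(s+1)\sigma_j - 1}$.

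No real obstacle is expected. The only points needing care are the linear independence of the $e_k$, which comes from $c_K \in \inte(W)$, and noting that the normalization $\sum \sigma_k = 1$ is exactly what collapses the equations. If $\sigma_j = \frac{1}{s+1}$, the equation reads $0 = s > 0$, so $L$ and $H^i_j$ are disjoint.
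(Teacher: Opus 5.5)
Your proof is correct, and it takes a different route from the paper's.

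The paper works geometrically. It first invokes Lemma~\ref{lem:spikes} ($c_K \in \inte(T^i)$ and $H^i_j$ induces a facet of $T^i$) to get $c_K \notin H^i_j$, so $L$ meets $H^i_j$ in at most one point. For $\sigma_j = \frac{1}{s+1}$ it shows that the parallel translate $L - c_K + w^j$ lies inside $H^i_j$. Hence $L$ is parallel to $H^i_j$ and, since $c_K \notin H^i_j$, disjoint from it. For $\sigma_j \neq \frac{1}{s+1}$ it checks directly that the two displayed expressions coincide. The left one visibly lies on $L$ and the right one on $H^i_j$, so this is the unique intersection point. The paper also checks separately, via $c_K \in \inte(W)$, that $L$ is a genuine line.

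You instead write everything in the basis $e_k = w^k - c_K$, $k \neq i$. Membership of $c_K + t d$ in $H^i_j$ then reduces to the single scalar equation $t\bigl((s+1)\sigma_j - 1\bigr) = s$. Existence, uniqueness, the degenerate case $\sigma_j = \frac{1}{s+1}$, and both forms of the point all come out of that one computation. Your argument is more self-contained: it needs only $c_K \notin \aff(\{w^k : k \neq i\})$, not the facet structure of $T^i$ from Lemma~\ref{lem:spikes}. It also derives the intersection point rather than verifying a given formula, and linear independence gives $d \neq 0$ automatically. The paper's version reuses the geometry it has just set up and reads as a check of an explicit identity.
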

\begin{proof}
By \eqref{eq:c_K_int(W)}, $c_K$ is an interior point of $W$,
whereas $\sum_{k \in [n+1] \setminus \{i\}} \sigma_k w^k$ lies in the affine hull of one of its facets.
Thus, $L$ is indeed a line and not just a singleton.
Since $c_K \in \inte(T^i)$ by Lemma~\ref{lem:spikes}~(i) and $H^i_j$ induces a facet of $T^i$ by Lemma~\ref{lem:spikes}~(ii),
we have $c_K \notin H^i_j$.
Therefore, the line $L$ intersects the hyperplane $H^i_j$ in at most one point.

First, assume that $\sigma_j = \frac{1}{s+1}$.
We clearly have $w^j \in (L - c_K + w^j) \cap H^i_j$.
Moreover, by $\sum_{k \in [n+1] \setminus \{i,j\}} \sigma_k = 1 - \sigma_j = \frac{s}{s+1}$,
\begin{align*}
    c_K + \Bigg(\sum_{k \in [n+1] \setminus \{i\}} \sigma_k (w^k - c_K) \Bigg) - c_K + w^j
    & = w^j + \frac{w^j - c_K}{s+1} + \sum_{k \in [n+1] \setminus \{i,j\}} \frac{\sigma_k}{s} (c_K - v^k)
    \\
    & = w^j + \sum_{k \in [n+1] \setminus \{i,j\}} \frac{\sigma_k}{s} (w^j - v^k)
\end{align*}
is a point different from $w^j$ in $(L - c_K + w^j) \cap H^i_j$.
Thus, the line $L - c_K + w^j$ is a subset of the hyperplane $H^i_j$.
Consequently, if $L$ intersected $H^i_j$ in this case, we would obtain $L \subset H^i_j$, which is a contradiction.

Now, assume $\sigma_j \neq \frac{1}{s+1}$.
Using $s (w^k - c_K) = c_K - v^k$ for any $k \in [n+1]$ and $1 - \sigma_j = \sum_{k \in [n+1] \setminus \{i,j\}} \sigma_k$,
we compute

\begin{align*}
	& c_K + \frac{s}{(s+1) \sigma_j - 1} \sum_{k \in [n+1] \setminus \{i\}} \sigma_k (w^k - c_K)
	\\
	& = c_K + \frac{s \sigma_j}{(s+1) \sigma_j - 1} (w^j - c_K)
		+ \sum_{k \in [n+1] \setminus \{i,j\}} \frac{\sigma_k}{(s+1) \sigma_j - 1} (c_K - v^k)
	\\
	& = w^j + \frac{1 - \sigma_j}{(s+1) \sigma_j - 1} (w^j - c_K)
		+ \sum_{k \in [n+1] \setminus \{i,j\}} \frac{\sigma_k}{(s+1) \sigma_j - 1} (c_K - v^k)
	\\
	& = w^j + \sum_{k \in [n+1] \setminus \{i,j\}} \frac{\sigma_k}{(s+1) \sigma_j - 1} (w^j - c_K)
		+ \sum_{k \in [n+1] \setminus \{i,j\}} \frac{\sigma_k}{(s+1) \sigma_j - 1} (c_K - v^k)
	\\
	& = w^j + \sum_{k \in [n+1] \setminus \{i,j\}} \frac{\sigma_k}{(s+1) \sigma_j - 1} (w^j - v^k).
\end{align*}
This verifies the claimed equality of points in the lemma.
It is clear that the left-hand description of the point represents a point in $L$,
whereas the right-hand one represents a point in $H^i_j$.
Hence, the point must be the unique point in $L \cap H^i_j$ in this case.
\end{proof}

\begin{lemma}\label{lem:set_bound2}
We have
\[
	\R^n \setminus \bigcup_{i \in [n+1]} \big( w^i - \inte(\pos( T - w^i )) \big)
	\subset \bigcup_{i \in [n+1]} T^i.
\]
\end{lemma}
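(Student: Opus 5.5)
Following the outline given before Lemma~\ref{lem:lines_meeting_spikes}, we argue ray by ray from $c_K$. Let $x \in \R^n$ lie outside every $w^i - \inte(\pos(T - w^i))$. If $x = c_K$, then $x \in T^1$ by Lemma~\ref{lem:spikes}~(i). Otherwise, by \eqref{eq:c_K_int(W)} the ray $c_K + [0,\infty)(x - c_K)$ leaves $W$ through some facet $\conv(\{w^k : k \neq i\})$ for some $i \in [n+1]$. Hence we can write
\[
	x - c_K = t \sum_{k \in [n+1] \setminus \{i\}} \sigma_k (w^k - c_K)
\]
with $t > 0$, $\sigma_k \geq 0$ and $\sum_{k \neq i} \sigma_k = 1$. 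We aim to show $x \in T^i$ for this index $i$.

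Since $c_K \in \inte(T^i)$, the ray stays in $T^i$ up to the first facet hyperplane it crosses. The facets of $T^i$ are the $F^i_j$ for $j \neq i$, together with the facet $\conv(\{v^j : j \neq i\})$ opposite $z^i$, which is shared with $T$.
\begin{enumerate}[(i)]
\item For $H^i_j$, Lemma~\ref{lem:lines_meeting_spikes} shows the ray meets it only if $(s+1)\sigma_j > 1$, and then at parameter $t_j = \frac{s}{(s+1)\sigma_j - 1}$. If no $\sigma_j$ exceeds $\frac{1}{s+1}$, these facets impose no constraint.
\item The facet of $T$ opposite $z^i$ needs a separate check. I expect the ray, which points into the cone spanned toward the $w^k$ with $k \neq i$, i.e.\ away from the $v^k$ with $k \neq i$, either misses it or meets it only beyond a point already excluded. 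This would follow from $T \subset T^i$ and the fact that the $w^k$ lie on the other side of $c_K$ relative to the $v^k$. Concretely, I would evaluate the barycentric coordinate of the ray with respect to $v^i$ in $T$ using \eqref{eq:c_K}, and check that it stays positive (increasing).
\end{enumerate}

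The key step is to show that if $t > t_j$ for some $j$ with $(s+1)\sigma_j > 1$, then $x \in w^j - \inte(\pos(T - w^j))$, contradicting the assumption on $x$. By Lemma~\ref{lem:lines_meeting_spikes}, the crossing point is
\[
	p = w^j + \sum_{k \neq i,j} \mu_k (w^j - v^k), \qquad \mu_k = \frac{\sigma_k}{(s+1)\sigma_j - 1} \geq 0,
\]
so $p - w^j \in -\pos(T - w^j)$, lying on its boundary. Moving further along the ray adds a positive multiple of $\sum_{k \neq i} \sigma_k (w^k - c_K) = \frac{1}{s}\sum_{k \neq i} \sigma_k (c_K - v^k)$.

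So it suffices to show that this direction lies in $-\inte(\pos(T - w^j))$, or at least that $p$ plus any positive multiple of it does. To check this, I would write $c_K - v^k$ in terms of the generators $v^\ell - w^j$ using \eqref{eq:c_K} and the definition of $w^j$. The goal is to obtain strictly negative coefficients on all $v^\ell - w^j$ once combined with the nonnegative $\mu_k$. The coefficient of $v^j$ involves $(s+1)\sigma_j - 1 > 0$, and this is exactly where that sign condition should enter. The computation is similar to the one proving \eqref{eq:smaller_vi_cone}.

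This sign bookkeeping is where I expect the main difficulty. The coefficients of $v^i$ and of the $v^k$ with $\sigma_k = 0$ may vanish, and then strictness must come from the full cone's interior, using that $\lambda$'s sum to $\varepsilon < 1$. If a direct coefficient comparison fails, I would fall back on a perturbation argument. One can assume $\sigma$ lies in the open simplex, since the boundary cases follow by closedness of $\bigcup_i T^i$ and openness of the excluded cones. Then one uses \eqref{eq:smaller_vi_cone} to absorb any borderline terms into the interior. Combining (i), (ii) and the key step gives $x \in T^i$ for every admissible $x$, proving the lemma.
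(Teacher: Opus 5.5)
Your skeleton is the paper's. Rays from $c_K$, the facet of $W$ hit by the ray selects $T^i$, the facet $\conv(\{v^k : k \neq i\})$ of $T^i$ is never reached, and Lemma~\ref{lem:lines_meeting_spikes} gives the crossing parameters $t_j$. Your barycentric check for the facet opposite $z^i$ works: by \eqref{eq:c_K}, the $v^i$-coordinate increases along $\sum_{k\neq i}\sigma_k(w^k-c_K)$ at rate $\frac{1-\lambda_i}{s(s+1)}>0$. The paper says the same via the parallel hyperplanes through the $v^k$ and through the $w^k$, $k\neq i$. Also, some $\sigma_j \geq \frac1n > \frac{1}{s+1}$ always exists, so your ``no $\sigma_j$ exceeds'' case never occurs.

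The gap is the step you flag yourself. You never prove that points beyond the crossing point $p$ lie in $w^j - \inte(\pos(T-w^j))$; you only sketch a coefficient computation. Your fallback does not work as stated. From the generic case you only learn that a neighbourhood of a bad boundary point lies in the \emph{closure} of the excluded set. Openness of the cones does not put the point itself into the excluded set. Closedness of $\bigcup_i T^i$ also does not supply admissible generic points accumulating at it.

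The missing idea is a one-liner, and it is what the paper uses. The ray direction is a positive multiple of $p - c_K = (p - w^j) - (c_K - w^j)$. Here $p - w^j \in -\pos(T-w^j)$, and $c_K - w^j \in \inte(T) - w^j \subset \inte(\pos(T-w^j))$ because $c_K\in\inte(T)$ by \eqref{eq:c_K_int(T)}. A closed convex cone plus its interior lies in its interior, so for every $\rho>0$
\[
p + \rho(p-c_K) = w^j + (1+\rho)(p-w^j) - \rho(c_K-w^j) \in w^j - \inte(\pos(T-w^j)).
\]
In your coordinates this is the collinearity $c_K - w^j = \frac{1}{s+1}(v^j - w^j)$, which gives
\[
-s\sum_{k\neq i}\sigma_k (w^k-c_K) = \sum_{k\neq i,j}\sigma_k (v^k-w^j) + \Big(\sigma_j - \frac{1}{s+1}\Big)(v^j-w^j).
\]
The coefficient of $v^j-w^j$ is positive exactly when $(s+1)\sigma_j>1$, as you predicted. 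Vanishing coefficients elsewhere are harmless, because $v^j-w^j=(s+1)(c_K-w^j)$ already lies in $\inte(\pos(T-w^j))$. No perturbation and no appeal to \eqref{eq:smaller_vi_cone} is needed.
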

\begin{proof}
Let $x \in \R^n$ be chosen arbitrarily.
If $x = c_K$, then $c_K \in K$ together with Lemma~\ref{lem:set_bound1} shows that $x$ is contained in the left-hand set,
whereas Lemma~\ref{lem:spikes} shows that $x$ is contained in the right-hand set.

From now on, assume $x \neq c_K$.
By \eqref{eq:c_K_int(W)},
the ray $R := c_K + [0,\infty) (x - c_K)$ intersects the boundary of $W$ in a unique point $y$.
Since $W$ is an $n$-simplex, there exist some $i \in [n+1]$ and for all $k \in [n+1] \setminus \{i\}$ some $\sigma_k \geq 0$
such that $\sum_{k \in [n+1]} \sigma_k = 1$ and $y = \sum_{k \in [n+1] \setminus \{i\}} \sigma_k w^k$.
By Lemma~\ref{lem:spikes}~(i),
the ray $R$ also intersects the boundary of the simplex $T^i$ in a unique point $z$.
To determine $z$,
it suffices to determine which of the hyperplanes that induce the facets of $T^i$ are intersected by $R$
and then among the respective intersection points choose the one closest to $c_K$.
To this end, we first observe that $c_K$ lies strictly between the parallel hyperplanes $\aff(\{ v^k : k \in [n+1] \setminus \{i\} \})$
and $\aff(\{ w^k : k \in [n+1] \setminus \{i\} \}) = c_K + \frac{1}{s} (c_K - \aff(\{ v^k : k \in [n+1] \setminus \{i\} \}))$.
Since $R$ intersects the latter in $y$, it cannot also intersect the former.
Hence, Lemma~\ref{lem:spikes}~(ii) shows that $z$ lies in one of the hyperplanes $H^i_j$ for some $j \in [n+1] \setminus \{i\}$.

Lemma~\ref{lem:lines_meeting_spikes} now shows for $j \in [n+1] \setminus \{i\}$
that the line $\aff(R) = c_K + \lin(\{y-c_K\})$ intersects the hyperplane $H^i_j$ if and only if $\sigma_j \neq \frac{1}{s+1}$.
In this case, their unique intersection point is
\begin{equation}\label{eq:intersection_R_Hij}
	c_K + \frac{s}{(s+1) \sigma_j - 1} (y - c_K).
\end{equation}
This point lies in $R = c_K + [0,\infty) (y - c_K)$ if and only if $\sigma_j > \frac{1}{s+1}$,
so $R$ intersects $H^i_j$ for precisely those $j \in [n+1] \setminus \{i\}$ with $\sigma_j > \frac{1}{s+1}$.
Moreover, we see that the point in \eqref{eq:intersection_R_Hij} lies closer to $c_K$ the larger $\sigma_j$ is.
Altogether, if $j \in \argmax \{ \sigma_k : k \in [n+1] \setminus \{i\} \}$,
which by $\sum_{k \in [n+1] \setminus \{i\}} \sigma_k = 1$ implies $\sigma_j \geq \frac{1}{n} > \frac{1}{s+1}$,
then
\[
	z
	= c_K + \frac{s}{(s+1) \sigma_j - 1} (y - c_K)
	\in \bd(T^i).
\]
By Lemma~\ref{lem:lines_meeting_spikes}, we also have
\[
	z
	= w^j + \sum_{k \in [n+1] \setminus \{i,j\}} \frac{\sigma_k}{(s+1) \sigma_j - 1} (w^j - v^k),
\]
so in particular $z \in w^j - \pos(T - w^j)$.
Hence,
\eqref{eq:c_K_int(T)} shows for any $\rho > 0$ that
\begin{align*}
	c_K + (1+\rho) (z - c_K)
	& = w^j + (1 + \rho) (z - w^j) - \rho (c_K - w^j)
    \\
	& \in w^j - \pos(T - w^j) - \inte(\pos( T - w^j ))
	= w^j - \inte(\pos( T - w^j )).
\end{align*}
Altogether, if $x \in R = c_K + [0,\infty) (z - c_K)$ is contained in the left-hand set in the lemma,
then $x \in c_K + [0,1] (z - c_K) = [c_K,z] \subset T^i$,
where $i$ was determined by where $R$ meets $\bd(W)$.
\end{proof}

\begin{remark}\label{rem:exact_Ti_description}
In the above lemma, also equality of the considered sets can be shown.
Since we do not need this fact later, we only give an outline how to obtain the reverse inclusion.

First, we note that $z$ is the furthest point from $c_K$ on the ray $R$ that still lies in the union of the $T^\ell$.
Indeed, fix some $\ell \in [n+1] \setminus \{i\}$.
If $R$ intersects $\conv(\{ w^k : k \in [n+1] \setminus \{i,\ell\} \})$, then it follows from Lemma~\ref{lem:lines_meeting_spikes} that $R$ meets $\bd(T^i)$ and $\bd(T^\ell)$ in the same point.
Otherwise, $R$ points into the halfspace that is bounded by the hyperplane $H := \aff(\{c_K, w^k : k \in [n+1] \setminus \{i,\ell\} \})$ and contains $w^j$.
The part of $T^\ell$ in that same halfspace is the convex hull of the $v^k$ with $k \in [n+1] \setminus \{\ell\}$ and the point $p$ in $H \cap [v^i,z^\ell]$.
By the above, $p$ also lies in the boundary of $T^i$, so that $T^i$ contains the part of $T^\ell$ in the relevant halfspace.

It remains to show that $z$ lies in the left-hand set in the lemma,
i.e., that $z$ does not lie in an open polyhedron of the form $w^\ell - \inte(\pos( T - w^\ell ))$.
For $\ell \neq i$,
it can be shown that the hyperplane $H^i_\ell$ induces a facet of the closed polyhedron $w^\ell - \pos( T - w^\ell )$,
while it also induces a facet of $T^i$.
Therefore, $H^i_\ell$ strictly separates $c_K \in T^i$ from $w^\ell - \inte(\pos( T - w^\ell ))$.
Consequently, $R$ must meet $H^i_\ell$ before it can enter $w^\ell - \inte(\pos( T - w^\ell ))$.
Since Lemma~\ref{lem:lines_meeting_spikes} and $\sigma_j \geq \sigma_\ell$ show
$\aff(\{c_K,z\}) \cap H^i_\ell \subset \aff(\{c_K,z\}) \setminus [c_K,z)$,
this means $z \notin w^\ell - \inte(\pos( T - w^\ell ))$.
Finally, for $\ell = i$, it can be shown that $w^i - \inte(\pos( T - w^i ))$ is contained
in the halfspace that is bounded by $w^i + \aff(\{ v^k : k \in [n+1] \setminus \{i\} \})$ and does not contain $c_K$.
Hence, the ray $c_K + [0,\infty) (z - c_K)$ does not meet this set by the same argument
why it does not meet the hyperplane $\aff(\{ v^k : k \in [n+1] \setminus \{i\} \})$.
Altogether, $z$ also does not lie in $w^i - \inte(\pos( T - w^i ))$ and is therefore contained in the left-hand set in the lemma.
\end{remark}

Let us point out that if we were to stop here and use the union of the $T^i$ for the superset of $K$,
then we would achieve the upper bound $\frac{1}{1-\varepsilon}$
(which coincides precisely with the one obtained in \cite{2026:BakaevYehudayoff}).
In fact, it is not difficult to show that the convex hull of the union of the $T^i$ is a translated copy of $T$ rescaled by a factor $\frac{1}{1-\varepsilon}$.
However, we still have the second type of polyhedra in Lemma~\ref{lem:set_bound1} that we can exclude to improve the estimate.
To do so, we need to find which parts of the $T^i$ near the $z^i$ are cut off by those polyhedra.
This can be determined with the help of the following technical lemma.

\begin{lemma}\label{lem:spikes_cutoff}
Let $i,j,h \in [n+1]$ be pairwise different.
Then the segment $S := [v^j, z^i]$ intersects the hyperplane
$H := H^h_i - w^i + v^i = v^i + \lin(\{ v^k - w^i : k \in [n+1] \setminus \{i,h\} \})$
in the point
\begin{align*}
	p^i_{j,h}
	& := \frac{\varepsilon (1 - \lambda_h) - \lambda_i}{1 - \lambda_h - \lambda_i} v^j + \frac{(1 - \varepsilon) (1 - \lambda_h)}{1 - \lambda_h - \lambda_i} z^i
    \\
	& = v^i + \frac{\lambda_h (1 - \varepsilon) + \varepsilon - \lambda_j - \lambda_i}{1 - \lambda_h - \lambda_i} (v^j - w^i) + \sum_{k \in [n+1] \setminus \{i,j,h\}} \frac{\lambda_h - \lambda_k}{1 - \lambda_h - \lambda_i} (v^k - w^i)
\end{align*}
\end{lemma}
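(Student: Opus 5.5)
This is a direct computation in barycentric coordinates with respect to the vertices $v^1,\dots,v^{n+1}$ of $T$. We do not locate the intersection abstractly. Instead we verify that the claimed point $p^i_{j,h}$ has both descriptions: it lies on the line through $v^j$ and $z^i$, and it lies on $H$. Since $S$ is not parallel to $H$ (checked below), this pins it down as the intersection point. The plan has three steps.

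\textbf{Step 1: $p^i_{j,h}$ lies on the line through $v^j$ and $z^i$.} The two coefficients in the first expression are
\[
\frac{\varepsilon(1-\lambda_h)-\lambda_i}{1-\lambda_h-\lambda_i}
\quad\text{and}\quad
\frac{(1-\varepsilon)(1-\lambda_h)}{1-\lambda_h-\lambda_i}.
\]
They sum to $1$, because $\varepsilon(1-\lambda_h)-\lambda_i+(1-\varepsilon)(1-\lambda_h)=1-\lambda_h-\lambda_i$. The denominator is positive since $\lambda_h+\lambda_i\le\varepsilon<1$. So $p^i_{j,h}$ is an affine combination of $v^j$ and $z^i$. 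To get $p^i_{j,h}\in S$ itself, we also need both coefficients to be nonnegative. The second clearly is. For the first we need $\lambda_i\le\varepsilon(1-\lambda_h)$. This does not follow from $\lambda_i+\lambda_h\le\varepsilon$ alone, so here I would fall back on the exact line intersection and note that the lemma asserts only that the intersection point of the line with $H$ is given by the formula.

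\textbf{Step 2: the two expressions agree.} Substitute
\[
z^i=\frac{1}{1-\varepsilon}\Big(v^i-\sum_\ell\lambda_\ell v^\ell\Big)
\]
from \eqref{eq:z^i}, and $w^i=\frac{s+1}{s}c_K-\frac1s v^i$ with $c_K$ from \eqref{eq:c_K}. This gives
\[
w^i=\frac1s\Big(\sum_\ell(1-\lambda_\ell)v^\ell-v^i\Big).
\]
The coefficients in the second expression,
\[
\alpha_j=\frac{\lambda_h(1-\varepsilon)+\varepsilon-\lambda_j-\lambda_i}{1-\lambda_h-\lambda_i},\qquad \alpha_k=\frac{\lambda_h-\lambda_k}{1-\lambda_h-\lambda_i},
\]
sum to $A:=\alpha_j+\sum_{k\ne i,j,h}\alpha_k$. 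Using $\sum_{k\ne i,j,h}\lambda_k=\varepsilon-\lambda_i-\lambda_j-\lambda_h$ and $(n-2)=(s-1)+\varepsilon-1$, this simplifies. Then compare the coefficient of each $v^\ell$ on both sides, with $s=n-\varepsilon$ and $\sum\lambda_\ell=\varepsilon$ used repeatedly. The main bookkeeping is that the $-Aw^i$ term contributes $-\frac{A}{s}(1-\lambda_\ell)$ to every $v^\ell$ and $+\frac As$ to $v^i$. One expects $A$ to be chosen exactly so that the $v^h$-coefficients match. This is the step I expect to be the main obstacle, purely computationally: keeping the five index classes $i,j,h$, the generic $k$, and the sums over all $\ell$ straight.

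\textbf{Step 3: the point lies on $H$, and $S$ is not parallel to $H$.} The second expression shows $p^i_{j,h}\in v^i+\lin(\{v^k-w^i:k\ne i,h\})$, since $j\ne i,h$. This set equals $H$. It is a hyperplane because $H^h_i$ is one by Lemma~\ref{lem:spikes}~(ii), and it is the translate $H^h_i-w^i+v^i$ since $H^h_i=w^i+\lin(\{w^i-v^k:k\ne h,i\})$. Finally, the line through $v^j$ and $z^i$ is not contained in $H$. Indeed, by Step 1 its points are parametrized affinely, while the $v^h$-component of the difference (nonzero, as $z^i$ has a $-\lambda_h/(1-\varepsilon)$ weight and the $w^i$-terms carry the factor $(1-\lambda_h)$) shows the direction $z^i-v^j$ is not in the linear span. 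Alternatively, uniqueness follows because the coefficients in Step 2 were forced. Hence $S\cap H$ is exactly $\{p^i_{j,h}\}$.
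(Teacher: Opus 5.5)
Step 1 contains a genuine gap, and it rests on a false claim. The inequality $\lambda_i \le \varepsilon(1-\lambda_h)$ \emph{does} follow from the standing hypotheses. Since $i \neq h$ and the nonnegative $\lambda_\ell$ sum to $\varepsilon$, you have $\lambda_i+\lambda_h\le\varepsilon$. Using also $\lambda_h \ge 0$ and $\varepsilon<1$,
$\varepsilon(1-\lambda_h) = \varepsilon - \varepsilon\lambda_h \ge \varepsilon - \lambda_h \ge \lambda_i$.
The paper writes this as $\lambda = 1 - \frac{(1-\varepsilon)(1-\lambda_h)}{1-\lambda_h-\lambda_i} \ge 1 - \frac{1-\varepsilon}{1-\lambda_h-\lambda_i} \ge 0$. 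With your observation that the second coefficient is nonnegative, both coefficients lie in $[0,1]$, so $p^i_{j,h}\in S$. Your fallback, reading the lemma as a statement about the line through $v^j$ and $z^i$, is not acceptable. The lemma is about the segment $S$, and segment membership is exactly what is used later. The proof of Lemma~\ref{lem:set_bound3} needs $q^i_j\in[v^j,z^i)$. The proof of Theorem~\ref{thm:main} needs $q^k_j\in[v^j,z^k]$, since the bound $(a^i)^T q^i_j \le \max\{(a^i)^T v^j,(a^i)^T z^i\}$ requires a convex combination. As written, you prove only the weaker line version; the one-line estimate above closes the gap.

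Otherwise your route matches the paper's: verify directly that the point has both representations. Two further remarks.

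Step 2 is only announced, not carried out, but it does work. With $D:=1-\lambda_h-\lambda_i$ one finds $A=s\lambda_h/D$. Both expressions then have barycentric coordinates, multiplied by $D$, equal to $(1-\lambda_h)(1-\lambda_i)$ at $v^i$, $\varepsilon(1-\lambda_h)-\lambda_i-(1-\lambda_h)\lambda_j$ at $v^j$, and $-(1-\lambda_h)\lambda_k$ at every other $v^k$, including $k=h$. The paper organizes the same algebra through the identity $\sum_{\ell}(\lambda_h-\lambda_\ell)(v^\ell-w^i)=(\varepsilon-(1+\varepsilon)\lambda_h)w^i+\lambda_h v^i-(1-\lambda_h)\sum_\ell\lambda_\ell v^\ell$.

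In Step 3, non-parallelism is not needed, since the paper only shows $p^i_{j,h}\in S\cap H$. Your $v^h$-component argument for it is also invalid. Vectors in $\lin(\{v^k-w^i : k\neq i,h\})$ have arbitrary $v^h$-components, and for $\lambda_h=0$ the $v^h$-component of $z^i-v^j$ is zero anyway. If you want uniqueness, use this criterion instead: a vector $\sum_\ell c_\ell v^\ell$ with $\sum_\ell c_\ell=0$ lies in that span if and only if $(1-\lambda_h)c_i+\lambda_i c_h=0$. For $z^i-v^j$ this expression equals $\frac{1-\lambda_h-\lambda_i}{1-\varepsilon}>0$.
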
 \begin{proof}
We define
\[
	\lambda
    := 1 - \frac{(1 - \varepsilon) (1 - \lambda_h)}{1 - \lambda_h - \lambda_i}
    = \frac{\varepsilon (1 - \lambda_h) - \lambda_i}{1 - \lambda_h - \lambda_i},
\]
which by $0 \leq \lambda_h \leq \lambda_h + \lambda_i \leq \varepsilon < 1$ satisfies
\[
	1
    \geq \lambda
    \geq 1 - \frac{1 - \varepsilon}{1 - \lambda_h - \lambda_i}
    \geq 1 - \frac{1 - \varepsilon}{1 - \varepsilon}
    = 0.
\]
Moreover,
\[
	\frac{\lambda_h (1 - \varepsilon) + \varepsilon - \lambda_j - \lambda_i}{1 - \lambda_h - \lambda_i}
    = \frac{\lambda_h - \lambda_j}{1 - \lambda_h - \lambda_i} + \lambda,
\]
and, by \eqref{eq:c_K},
\begin{align*}
	\sum_{\ell \in [n+1]} (\lambda_h - \lambda_\ell) (v^\ell - w^i)
    & = (\varepsilon - (n+1) \lambda_h) w^i + \sum_{\ell \in [n+1]} ((1 - \lambda_\ell) \lambda_h + (\lambda_h - 1) \lambda_\ell) v^\ell
    \\
	& = (\varepsilon - (1 + \varepsilon) \lambda_h - \lambda_h s) w^i + \lambda_h (s+1) c_K - (1 - \lambda_h) \sum_{\ell \in [n+1]} \lambda_\ell v^\ell
    \\
	& = (\varepsilon - (1 + \varepsilon) \lambda_h) w^i + \lambda_h v^i - (1 - \lambda_h) \sum_{\ell \in [n+1]} \lambda_\ell v^\ell.
\end{align*}
Thus,
\begin{align*}
	& v^i + \frac{\lambda_h (1 - \varepsilon) + \varepsilon - \lambda_j - \lambda_i}{1 - \lambda_h - \lambda_i} (v^j - w^i) + \sum_{k \in [n+1] \setminus \{i,j,h\}} \frac{\lambda_h - \lambda_k}{1 - \lambda_h - \lambda_i} (v^k - w^i)
    \\
	& = v^i + \lambda (v^j - w^i) + \sum_{k \in [n+1] \setminus \{i\}} \frac{\lambda_h - \lambda_k}{1 - \lambda_h - \lambda_i} (v^k - w^i)
    \\
	& = \lambda v^j + v^i - \lambda w^i + \frac{(\varepsilon - (1 + \varepsilon) \lambda_h) w^i + \lambda_h v^i - (\lambda_h - \lambda_i) (v^i - w^i)  - (1 - \lambda_h) \sum_{\ell \in [n+1]} \lambda_\ell v^\ell}{1 - \lambda_h - \lambda_i}
    \\
	& = \lambda v^j + \frac{(1 - \lambda_h) v^i - (1 - \lambda_h) \sum_{\ell \in [n+1]} \lambda_\ell v^\ell}{1 - \lambda_h - \lambda_i}
	= \lambda v^j + (1 - \lambda) z^i
\end{align*}
shows that both descriptions of $p^i_{j,h}$ given in the lemma do indeed coincide.
Finally, $\lambda \in [0,1]$ and the first description of $p^i_{j,h}$ show $p^i_{j,h} \in S$,
whereas the second description shows $p^i_{j,h} \in H$.
\end{proof}

\begin{lemma}\label{lem:set_bound3}
We have
\[
	K
    \subset \bigcup_{i \in [n+1]} \conv \big( \big\{ v^j, q^i_j : j \in [n+1] \setminus \{i\} \big\} \big),
\]
where for $i,j \in [n+1]$, $i \neq j$,
\[
	q^i_j
    := \frac{\varepsilon (1 - \lambda_{m(i,j)}) - \lambda_i}{1 - \lambda_{m(i,j)} - \lambda_i} v^j + \frac{(1 - \varepsilon) (1 - \lambda_{m(i,j)})}{1 - \lambda_{m(i,j)} - \lambda_i} \Bigg( v^i + \frac{1}{1 - \varepsilon} \Bigg( \varepsilon v^i - \sum_{\ell \in [n+1]} \lambda_\ell v^\ell \Bigg) \Bigg).
\]
\end{lemma}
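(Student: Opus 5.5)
The plan is to combine the covering $K \subset \bigcup_{i} T^i$ with the cut-off by the second family of excluded cones, taking for each $i$ the cone attached to the same index $i$. We write
\[
C^i := v^i + \pos(T - w^i).
\]
By Lemma~\ref{lem:set_bound1} and Lemma~\ref{lem:set_bound2}, every $x \in K$ lies in some $T^i$ and satisfies $x \notin \inte(C^i)$. It therefore suffices to show, for each fixed $i$, that
\[
T^i \setminus \inte(C^i) \subset Q^i := \conv\big(\{ v^j, q^i_j : j \in [n+1]\setminus\{i\}\}\big).
\]

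\textbf{Step 1: identify the $q^i_j$.} For $j \neq i$, set $h := m(i,j)$; this index exists since $n \geq 2$. Comparing definitions, $q^i_j = p^i_{j,h}$ as in Lemma~\ref{lem:spikes_cutoff}, because the bracketed vector in the definition of $q^i_j$ is exactly $z^i$ from \eqref{eq:z^i}. Lemma~\ref{lem:spikes_cutoff} then gives:
\begin{itemize}
\item $q^i_j \in [v^j, z^i]$, i.e.\ $q^i_j$ lies on the edge of the simplex $T^i$ from its apex $z^i$ to $v^j$;
\item the second description
\[
q^i_j = v^i + \frac{\lambda_h (1-\varepsilon) + \varepsilon - \lambda_j - \lambda_i}{1-\lambda_h-\lambda_i} (v^j - w^i) + \sum_{k \in [n+1]\setminus\{i,j,h\}} \frac{\lambda_h - \lambda_k}{1-\lambda_h-\lambda_i} (v^k - w^i).
\]
\end{itemize}
Reading off this second description, all coefficients are nonnegative:
\begin{itemize}
\item The first numerator is at least $\varepsilon - \lambda_i - \lambda_j \geq \lambda_h \geq 0$, because $\sum_\ell \lambda_\ell = \varepsilon$.
\item Each $\lambda_h - \lambda_k \geq 0$, because $h = m(i,j)$ maximizes $\lambda_k$ over $k \notin \{i,j\}$.
\item The denominators are positive, because $\lambda_h + \lambda_i \leq \varepsilon < 1$.
\end{itemize}
Hence $q^i_j \in C^i$ for all $j \neq i$. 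This is the place where the choice $h = m(i,j)$ matters.

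\textbf{Step 2: the apex is interior to the cone.} By \eqref{eq:z^i}, $z^i \in v^i + \pos(v^i - T)$, and $z^i \neq v^i$ whenever $\varepsilon > 0$. Then \eqref{eq:smaller_vi_cone} gives $z^i \in \inte(C^i)$.

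\textbf{Step 3: decompose $T^i$.} $T^i$ is an $n$-simplex with apex $z^i$ and opposite vertices $v^j$, $j \neq i$. Each $q^i_j$ lies on the edge $[v^j, z^i]$. Then the hyperplane through the $q^i_j$ splits $T^i$ as
\[
T^i = Q^i \cup \conv\big(\{z^i, q^i_j : j \neq i\}\big).
\]
Every point of the small simplex not lying in $Q^i$ has a positive barycentric weight on $z^i$. Since $C^i$ is convex, contains all $q^i_j$ by Step 1, and contains $z^i$ in its interior by Step 2, every such point lies in $\inte(C^i)$. This yields $T^i \setminus \inte(C^i) \subset Q^i$, which proves the lemma.

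\textbf{Main obstacle.} The one point I expect to need care is degenerate positions: some $q^i_j$ may coincide with $v^j$ or with $z^i$, and the $q^i_j$ then need not be affinely independent. To handle this, I would phrase the decomposition in Step 3 directly in barycentric coordinates rather than via a separating hyperplane. For $x = \mu z^i + \sum_{j} \mu_j v^j \in T^i$, each $v^j$ and $z^i$ is a convex combination of $q^i_j$ and the other endpoint of its edge. Using this, I would either write $x$ as a convex combination of the $v^j$ and $q^i_j$, or exhibit a positive weight on $z^i$ together with the $q^i_j$. In the second case $x \in \inte(C^i)$ by the same convexity argument. The case $\varepsilon = 0$, where $z^i = v^i$, is trivial because $K = T$.
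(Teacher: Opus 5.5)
Your proposal follows the paper's proof essentially step for step: the same reduction via Lemmas~\ref{lem:set_bound1} and~\ref{lem:set_bound2} to $T^i \setminus (v^i + \inte(\pos(T-w^i))) \subset \conv(\{v^j, q^i_j : j \neq i\})$, the identification $q^i_j = p^i_{j,m(i,j)}$ with the sign check on the second description from Lemma~\ref{lem:spikes_cutoff}, interiority of $z^i$ via \eqref{eq:smaller_vi_cone}, and truncation of $T^i$ at the points $q^i_j$. For the last step the paper writes $x \in [y,z^i]$ and uses the gauge of $\conv(\{z^i,q^i_j\})$ about $z^i$ instead of a splitting hyperplane.

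One correction is needed. Since $z^i - v^i = \frac{1}{1-\varepsilon}\sum_{\ell \neq i} \lambda_\ell (v^i - v^\ell)$, you have $z^i = v^i$ exactly when $\lambda_i = \varepsilon$, and this can happen with $\varepsilon > 0$. For such $i$, Step~2 is false ($z^i$ is the apex of $C^i$), and there is no hyperplane through the $q^i_j$. The paper splits off this case. There all other $\lambda_k$ vanish, so $q^i_j = z^i = v^i$ and $\conv(\{v^j, q^i_j : j \neq i\}) = T = T^i$, which makes the inclusion trivial.

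In the remaining case $\lambda_i < \varepsilon$, the coefficient of $z^i$ in $q^i_j$ lies in $(0, 1-\varepsilon+\lambda_i]$ by \eqref{eq:monotone_quotient}. Hence $q^i_j \in (v^j, z^i)$, and the degenerate configurations you anticipate never occur.
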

\begin{proof}
By Lemmas~\ref{lem:set_bound1}~and~\ref{lem:set_bound2},
it suffices to prove for $i \in [n+1]$ that
\begin{equation}\label{eq:spikes_cutoff}
	R^i
		:= T^i \setminus ( v^i + \inte(\pos( T - w^i )) )
		\subset \conv(\{ v^j, q^i_j : j \in [n+1] \setminus \{i\} \}).
\end{equation}
If $\lambda_i = \varepsilon$, then $z^i = v^i$ and thus $T^i = T$.
Moreover, $\lambda_{m(i,j)} = 0$ and thus $q^i_j = v^i = z^i$ for any $j \in [n+1] \setminus \{i\}$.
Hence, \eqref{eq:spikes_cutoff} trivially holds in this case.

From now on, we assume $\lambda_i < \varepsilon$.
Then \eqref{eq:smaller_vi_cone} and \eqref{eq:z^i}
show $z^i \in v^i + \inte(\pos( T - w^i ))$.
Moreover, we have for any $j \in [n+1] \setminus \{i\}$ that
$\lambda_{m(i,j)} (1 - \varepsilon) + \varepsilon - \lambda_j - \lambda_i \geq 0$
and for any $k \in [n+1] \setminus \{i,j,m(i,j)\}$ that $\lambda_{m(i,j)} \geq \lambda_k$.
Thus, Lemma~\ref{lem:spikes_cutoff} shows $q^i_j = p^i_{j,m(i,j)} \in v^i + \pos( T - w^i )$.
Altogether, with $Q^i := \conv(\{ q^i_j : j \in [n+1] \setminus \{i\} \})$ it follows that
\begin{equation}\label{eq:C^i}
	C^i
    := \conv(\{ z^i, q^i_j : j \in [n+1] \setminus \{i\} \}) \setminus Q^i
	\subset  v^i + \inte(\pos( T - w^i )).
\end{equation}

Now, let $x \in R^i \subset T^i$ be chosen arbitrarily.
Then there exists $y \in \conv(\{ v^j : j \in [n+1] \setminus \{i\} \})$ such that $x \in [y,z^i]$.
For any $j \in [n+1] \setminus \{i\}$,
the discussion regarding \eqref{eq:monotone_quotient}, $\lambda_{m(i,j)} \leq \varepsilon - \lambda_i$, and $\lambda_i < \varepsilon$ show
\[
	\frac{(1 - \varepsilon) (1 - \lambda_{m(i,j)})}{1 - \lambda_{m(i,j)} - \lambda_i}
	\leq \frac{(1 - \varepsilon) (1 - (\varepsilon - \lambda_i))}{1 - (\varepsilon - \lambda_i) - \lambda_i}
	< 1,
\]
so Lemma~\ref{lem:spikes_cutoff} yields $q^i_j \in [v^j,z^i)$.
For $\rho > 0$ sufficiently large, it follows that
\[
	v^j
    \in z^i + \rho (\conv(\{ z^i, q^i_j : j \in [n+1] \setminus \{i\} \}) - z^i)
\]
and consequently for all $\rho > 0$ sufficiently large also
\[
	x
	\in T^i
	\subset z^i + \rho (\conv(\{ z^i, q^i_j : j \in [n+1] \setminus \{i\} \}) - z^i).
\]
Now, $x \in R^i \subset T^i \setminus C^i$ by \eqref{eq:C^i} yields $x \neq z^i$ and thus the existence of a smallest $\rho > 0$ with
\[
	x - z^i
		\in \rho (\conv(\{ z^i, q^i_j : j \in [n+1] \setminus \{i\} \}) - z^i)
		= \rho (\conv(\{ 0, q^i_j - z^i : j \in [n+1] \setminus \{i\} \}).
\]
Since $x \notin C^i$, this $\rho$ satisfies $\rho \geq 1$.
By the minimality of $\rho$, there must exist some $\sigma_j \geq 0$ for all $j \in [n+1] \setminus \{i\}$ with
$\sum_{j \in [n+1] \setminus \{i\}} \sigma_j = \rho$ and
\[
	x - z^i
    = \sum_{j \in [n+1] \setminus \{i\}} \sigma_j (q^i_j - z^i).
\]
Therefore,
\[
	z
    := z^i + \frac{1}{\rho} (x - z^i)
	\in z^i + \conv(\{ q^i_j - z^i : j \in [n+1] \setminus \{i\} \})
	= Q^i,
\]
and by $\rho \geq 1$ also $z \in [x,z^i]$.
Finally, the choice of $y$ and $z \in Q^i$ yield
\[
	x
	\in [y,z]
	\subset \conv(\{ v^j, q^i_j : j \in [n+1] \setminus \{i\} \}).
\]
Since $x \in R^i$ has been chosen arbitrarily, \eqref{eq:spikes_cutoff} and thus the lemma follow.
\end{proof}

With the above lemma,
the process of finding a sufficiently small, workable superset of $K$ is finally complete.
We can now end this section with the proof of the stability result in Theorem~\ref{thm:main}.

\begin{proof}[Proof of the upper bound in Theorem~\ref{thm:main}]
We assume $n \geq 2$ without loss of generality.
By Proposition~\ref{prop:böröczky}, there exists an asymmetry point simplex $T = \conv(\{ v^1, \dots, v^{n+1} \})$ of $K$. Applying an appropriate translation if necessary,
we may assume $T$ to be centered at the origin.
In this case, there exist affinely independent vectors $a^1, \dots, a^{n+1} \in \R^n \setminus \{0\}$ such that $T = \bigcap_{i \in [n+1]} H_{(a^i,1)}^\leq$
and for all $i,j \in [n+1]$, $i \neq j$, we have $(a^i)^T v^j = 1$ and $(a^i)^T v^i = -n$.

With the notation introduced in this section,
Lemma~\ref{lem:set_bound3} shows
\[
	K
	\subset L
	:= \conv(\{ v^i, q^i_j : i,j \in [n+1], i \neq j \})
\]
with
\[
	q^i_j = \frac{\varepsilon (1 - \lambda_{m(i,j)}) - \lambda_i}{1 - \lambda_{m(i,j)} - \lambda_i} v^j + \frac{(1 - \varepsilon) (1 - \lambda_{m(i,j)})}{1 - \lambda_{m(i,j)} - \lambda_i} \Bigg( v^i + \frac{1}{1 - \varepsilon} \Bigg( \varepsilon v^i - \sum_{\ell \in [n+1]} \lambda_\ell v^\ell \Bigg) \Bigg).
\]
By Lemma~\ref{lem:simplex_circumradius},
the set $L$, and thus also $K$, is contained in a translated copy of $T$ rescaled by a factor $\sum_{i \in [n+1]} \frac{h_L(a^i)}{n+1}$.
Therefore, the claimed upper bound on $d_{BM}(K,\simplex{n})$ can be proved by showing
\begin{equation}\label{eq:support_ineq}
	\sum_{i \in [n+1]} \frac{h_L(a^i)}{n+1}
		\leq 1 + \varepsilon + \frac{\varepsilon^2}{2 (1 - \varepsilon)}.
\end{equation}

To this end, let $i,j,k \in [n+1]$, $j \neq k$.
Then $(a^i)^T v^k \leq 1$,
and \eqref{eq:z^i} shows $(a^i)^T z^i \leq (a^i)^T v^i \leq -n$.
Thus, by $q^k_j \in [v^j,z^k]$, for $k = i$ we have
\[
	(a^i)^T q^k_j
    = (a^i)^T q^i_j
    \leq \max \{ (a^i)^T v^j, (a^i)^T z^i \}
    = 1.
\]
Furthermore, with $x := \sum_{\ell \in [n+1]} \lambda_\ell v^\ell \in \varepsilon T$, we obtain for $k \neq i$ that
\begin{align*}
	(a^i)^T q^k_j
	& \leq \frac{\varepsilon (1 - \lambda_{m(k,j)}) - \lambda_k}{1 - \lambda_{m(k,j)} - \lambda_k} + \frac{(1 - \varepsilon) (1 - \lambda_{m(k,j)})}{1 - \lambda_{m(k,j)} - \lambda_k} + \frac{1 - \lambda_{m(k,j)}}{1 - \lambda_{m(k,j)} - \lambda_k} ( \varepsilon - (a^i)^T x )
    \\
	& = 1 + \frac{1 - \lambda_{m(k,j)}}{1 - \lambda_{m(k,j)} - \lambda_k} ( \varepsilon - (a^i)^T x ).
\end{align*}
Now, $x \in \varepsilon T$ implies $\varepsilon - (a^i)^T x \geq 0$,
so altogether by the discussion regarding \eqref{eq:monotone_quotient},

\begin{align*}
	h_L(a^i)
	& \leq \max_{k \in [n+1] \setminus \{i\}} \Bigg( 1 + \frac{1 - \lambda_{m(k,k)}}{1 - \lambda_{m(k,k)} - \lambda_k} ( \varepsilon - (a^i)^T x ) \Bigg)
    \\
	& = \begin{cases}
		1 + \frac{1 - \lambda_{m_1}}{1 - \lambda_{m_1} - \lambda_{m_2}} ( \varepsilon - (a^i)^T x ), & \text{ if } i = m_1,
        \vspace{5pt} \\
		1 + \frac{1 - \lambda_{m_2}}{1 - \lambda_{m_2} - \lambda_{m_1}} ( \varepsilon - (a^i)^T x ), & \text{ if } i \neq m_1.
	\end{cases}
\end{align*}
In the proof of Lemma~\ref{lem:simplex_circumradius}, we have shown $\sum_{i \in [n+1]} a^i = 0$. Together with
\[
    \varepsilon - (a^{m_1})^T x = \varepsilon + n \lambda_{m_1} - \sum_{i \in [n+1] \setminus \{m_1\}} \lambda_i = (n+1) \lambda_{m_1},
\]
we obtain
\begin{align*}
	\sum_{i \in [n+1]} \frac{h_L(a^i)}{n+1}
	& \leq 1 + \frac{(\lambda_{m_2} - \lambda_{m_1}) (\varepsilon - (a^{m_1})^T x) + (1 - \lambda_{m_2}) \sum_{i \in [n+1]} (\varepsilon - (a^i)^T x)}{(n+1) (1 - \lambda_{m_2} - \lambda_{m_1})}
    \\
	& = 1 + \frac{(\lambda_{m_2} - \lambda_{m_1}) (n+1) \lambda_{m_1} + (1 - \lambda_{m_2}) (n+1) \varepsilon}{(n+1) (1 - \lambda_{m_2} - \lambda_{m_1})}
    \\
	& = 1 + \frac{(\lambda_{m_2} - \lambda_{m_1}) \lambda_{m_1} + (1 - \lambda_{m_2}) \varepsilon}{1 - \lambda_{m_2} - \lambda_{m_1}}
	= 1 + \frac{\varepsilon + \lambda_{m_2} (\lambda_{m_1} - \varepsilon) - \lambda_{m_1}^2}{1 - \lambda_{m_2} - \lambda_{m_1}}.
\end{align*}
Now, observe that the map
$\R \setminus \{1 - \lambda_{m_1}\} \ni x \mapsto 1 + \frac{\varepsilon + x (\lambda_{m_1} - \varepsilon) - \lambda_{m_1}^2}{1 - x - \lambda_{m_1}}$
is differentiable with derivative
\[
	\frac{(\lambda_{m_1} - \varepsilon) (1 - x - \lambda_{m_1}) + \varepsilon + x (\lambda_{m_1} - \varepsilon) - \lambda_{m_1}^2}
			{(1 - x - \lambda_{m_1})^2}
		= \frac{\lambda_{m_1} (1 + \varepsilon - 2 \lambda_{m_1})}{(1 - x - \lambda_{m_1})^2}
		\geq 0.
\]
Therefore, by $\lambda_{m_2} \leq \varepsilon - \lambda_{m_1}$,
\[
	1 + \frac{\varepsilon + \lambda_{m_2} (\lambda_{m_1} - \varepsilon) - \lambda_{m_1}^2}{1 - \lambda_{m_2} - \lambda_{m_1}}
		\leq 1 + \frac{\varepsilon - (\lambda_{m_1} - \varepsilon)^2 - \lambda_{m_1}^2}{1 - \varepsilon}
		= 1 + \varepsilon + \frac{2}{1 - \varepsilon} \lambda_{m_1} (\varepsilon - \lambda_{m_1}).
\]
Finally, the quadratic function $\R \ni x \mapsto x (\varepsilon - x)$ attains its global maximum for $x = \frac{\varepsilon}{2}$, so
\[
	1 + \varepsilon + \frac{2}{1 - \varepsilon} \lambda_{m_1} (\varepsilon - \lambda_{m_1})
		\leq 1 + \varepsilon + \frac{\varepsilon^2}{2 (1 - \varepsilon)}.
\]
Altogether, we conclude that \eqref{eq:support_ineq} is true,
which completes the proof.
\end{proof}

\section{Tightness of Theorem\texorpdfstring{~\ref{thm:main}}{ 1.1}}\label{sec:tightness}

We begin this section with an example showing that the estimate in Theorem~\ref{thm:main} cannot be improved if we restrict ourself
to using asymmetry point simplices.
Verifying all claims in the example is straightforward though somewhat tedious, so we omit the computations.

\begin{example}\label{ex:asymmetry_simplex_tightness}
For $n \geq 2$, let $T = \conv(\{ v^1, \dots, v^{n+1} \})$ be an $n$-simplex centered at the origin.
For $s \in (n-1,n]$,
$\varepsilon := n - s$,
$\rho \in [0,\min\{ \varepsilon, \frac{2 (s-1)}{n+1} \}]$,
and $v := \sum_{i \in [n+1] \setminus [2]} \frac{v^i}{n-1} = - \frac{v^1 + v^2}{n-1}$,
define
\begin{align*}
	x^1 & := \frac{\varepsilon + \rho}{2} v + \frac{2 - \varepsilon - \rho}{2} \bigg( v^1 + \frac{\varepsilon}{1 - \varepsilon} \frac{v^1 - v^2}{2} \bigg),
    \\
	x^2 & := \frac{\varepsilon + \rho}{2} v + \frac{2 - \varepsilon - \rho}{2} \bigg( v^2 + \frac{\varepsilon}{1 - \varepsilon} \frac{v^2 - v^1}{2} \bigg),
    \\
    K & := \conv(\{ x^1, x^2, v^1, \dots, v^{n+1} \}).
\end{align*}
Then $s(K) = s$, $c := - \frac{\varepsilon}{s+1} \frac{v^1 + v^2}{2}$ is the unique Minkowski center of $K$, and $T$ is an asymmetry point simplex of $K$ with
\[
	R(K,T)
    = 1 + \varepsilon + \frac{\varepsilon (\varepsilon - \rho)}{2 (1 - \varepsilon)}.
\]
If $\rho > 0$, then $T$ is the only asymmetry point simplex of $K$.
\end{example}

\begin{remark}
From analysing the proof of Theorem~\ref{thm:main}, it can be shown that any $K \in \CK^n$ with $s(K) \geq n - \varepsilon$ for some $\varepsilon \in (0,1)$ has an asymmetry point simplex $T$ with
\begin{equation}\label{eq:asymmetry_point_simplex_strict}
    R(K,T)
    < 1 + \varepsilon + \frac{\varepsilon^2}{2 (1-\varepsilon)}.
\end{equation}
Indeed, if the initially chosen asymmetry point simplex $T$ does not already satisfy the strict inequality, then having equality in all estimates throughout the proof gives sufficient control over $K$ to show that there must exist another asymmetry point simplex $T'$ that cannot also achieve equality in \eqref{eq:asymmetry_point_simplex_strict}.
In this sense, the uniqueness of the asymmetry point simplex for $\rho > 0$ in the above example is optimal.
Since \eqref{eq:asymmetry_point_simplex_strict} is a minor detail but verifying it is fairly tedious, we omit the proof.
\end{remark}

We now focus on the claimed tightness of the inequality in Theorem~\ref{thm:main} when we do not restrict to asymmetry point simplices.
It is shown in the working manuscript \cite{2026:Grundbacher}
how to compute the Banach--Mazur distance from the triangle to any quadrangle by means of comparing the lengths of the diagonals of the quadrangle to the lengths of the sections of the diagonals induced by the intersection point of the diagonals.
From relating these lengths also to the Minkowski asymmetry of the quadrangle, the following simple formula for trapezoids is obtained in \cite{2026:Grundbacher}.

\begin{proposition} \label{prop:trapezoid}
Let $K \in \CK^2$ be a trapezoid. Then
\[
    d_{BM}(K,\simplex{2})
    = 3 - s(K).
\]
Moreover, if $T \subset K$ is a triangle containing the longer of the two parallel edges of $K$ and one of the vertices of $K$ not incident to that edge,
then $R(K,T) = d_{BM}(K,\simplex{2})$.
\end{proposition}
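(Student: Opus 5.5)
Throughout, we use that both $s(\cdot)$ and $d_{BM}(\cdot,\simplex{2})$ are affine invariants. First we normalize. Every trapezoid is affinely equivalent to
\[
    K_t := \conv(\{(0,0),(1,0),(0,1),(t,1)\})
\]
for some $t \in [0,1)$. Here the longer parallel edge is $[(0,0),(1,0)]$ and the shorter one is $[(0,1),(t,1)]$. The case $t=1$ is a parallelogram, which we may include, and $t = 0$ degenerates to a triangle. The triangle $T$ in the statement can be taken as $T_0 := \conv(\{(0,0),(1,0),(0,1)\})$; the other admissible choice, using the vertex $(t,1)$, is handled symmetrically. The proof then has three steps: compute $s(K_t)$, compute $R(K_t,T_0)$, and prove a matching lower bound for $d_{BM}(K_t,\simplex{2})$.

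\emph{Step 1: $s(K_t) = 2-t$.} We exhibit an explicit candidate center $c$ on the axis through the midpoints of the two parallel edges. We then check $K_t - c \subset (2-t)(c - K_t)$ on the four vertices, which is a linear check against the four edge inequalities of $c-K_t$. To show optimality, we exhibit contact points of $K_t$ and $(2-t)(c-K_t)+c$ whose outer normals positively span $\R^2$; the standard optimality criterion for the Minkowski center then shows that no smaller factor works. The values $s=2$ at $t=0$ and $s=1$ at $t=1$ serve as sanity checks.

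\emph{Step 2: $R(K_t,T_0) = 1+t$.} The outer normals of $T_0$ are $-e_1$, $-e_2$ and $(1,1)$, with $h_{T_0}$-values $0$, $0$ and $1$. The corresponding values for $K_t$ are $0$, $0$ and $1+t$. By Lemma~\ref{lem:simplex_circumradius}, after centering $T_0$, or equivalently by the direct observation that a homothet $x+\rho T_0$ contains $K_t$ if and only if $\rho$ is at least the sum of the three support-value excesses, we get $R(K_t,T_0) = 1+t = 3 - s(K_t)$. Since $T_0 \subset K_t$, this gives $d_{BM}(K_t,\simplex{2}) \le 3-s(K_t)$. It also gives the second claim of the proposition, once equality is established.

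\emph{Step 3: the lower bound $d_{BM}(K_t,\simplex{2}) \ge 1+t$.} This is the main obstacle. Suppose $u+S \subset K_t \subset v+\rho S$ for some triangle $S$. By a compactness and shrinking argument we may assume the following: the vertices of $u+S$ lie on $\bd(K_t)$, and each edge of $v+\rho S$ supports $K_t$. Since $K_t$ has only four vertices, the circumscribed triangle $v+\rho S$ must have at least two of its edges lying on edge-lines of $K_t$, or passing through vertices of $K_t$. I would then split into finitely many combinatorial cases according to which edges of $K_t$ lie on edges of $v+\rho S$. In each case the ratio $\rho$ becomes an explicit function of one or two parameters, the positions of the inscribed vertices on the edges, and I would minimize it by elementary calculus, showing that the minimum is $1+t$.

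If the case analysis gets unwieldy, I would switch to the approach hinted at above, comparing the diagonals of $K_t$. The diagonals $[(0,0),(t,1)]$ and $[(1,0),(0,1)]$ meet in a point that divides them in the ratio $t:1$. Let $S$ be the inscribed triangle and $v+\rho S$ the circumscribed one. One of the diagonals has both endpoints in $v+\rho S$ and is cut by a corresponding chord of $u+S$. Comparing these ratios along that diagonal, which is an affine-invariant quantity, should force $\rho \ge 1+t$.
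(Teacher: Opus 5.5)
\paragraph{Steps 1 and 2 are fine; Step 3 is a genuine gap.}
Your Steps 1 and 2 are correct. With $c=(\frac{1}{3-t},\frac{1}{3-t})$ one finds
$c+(2-t)(c-K_t)=\{z : z_1\le 1,\ z_2\le 1,\ z_2\ge t-1,\ z_1+(1-t)z_2\ge 0\}\supset K_t$.
This set touches $K_t$ along common support lines with normals $e_1$, $e_2$, $-(1,1-t)$, whose convex hull contains $0$. Hence $s(K_t)=2-t$, and $R(K_t,T_0)=1+t$ gives $d_{BM}(K_t,\simplex{2})\le 3-s(K_t)$.

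The gap is Step 3. The lower bound $d_{BM}(K_t,\simplex{2})\ge 1+t$ is the entire nontrivial content of the proposition, and the second claim also depends on it. You give a plan rather than an argument, and the reductions you state carry no information:
\begin{itemize}
\item Every support line of a polygon passes through a vertex, so ``at least two edges on edge-lines of $K_t$ or through vertices'' restricts nothing.
\item Inscribed triangles with vertices on $\bd(K_t)$ still form a three-parameter family with many combinatorial types, and no minimization over them is carried out.
\item Both diagonals automatically have their endpoints in $v+\rho S\supset K_t$, and ``comparing these ratios \ldots\ should force $\rho\ge 1+t$'' is not an inequality.
\end{itemize}
The paper does not prove this step itself. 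It imports the result from a separate computation of $d_{BM}$ between triangles and arbitrary quadrangles via diagonal ratios, which is exactly the work your Step 3 leaves open.

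\paragraph{A short way to close the gap.}
For trapezoids you can use chords parallel to the two parallel edges. Let $S\subset K_t\subset v+\rho S$, and let $f(y)$ be the length of $S\cap\{x_2=y\}$.
\begin{itemize}
\item Since $S$ is a triangle, $f(y)=\min\{\alpha(y-a),\beta(b-y)\}$ on its height range $[a,b]$, for some $\alpha,\beta\in(0,\infty]$ (with $1/\infty=0$).
\item The analogous function of $v+\rho S$ is $\min\{\alpha(y-a'),\beta(b'-y)\}$ on $[a',b']$, with the \emph{same} slopes and $b'-a'=\rho(b-a)$.
\item Put $L:=\max f$, $u:=1/\alpha$, $w:=1/\beta$. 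Then $b-a=L(u+w)$, and $S\subset K_t$ gives $b-a\le 1$ and $L\le 1$.
\item The chords of $v+\rho S$ at heights $0$ and $1$ contain the edges of $K_t$ of lengths $1$ and $t$. Hence $-a'\ge u$ and $b'-1\ge tw$.
\end{itemize}
Therefore $\rho(b-a)=b'-a'\ge 1+u+tw\ge 1+t(u+w)$, and dividing by $b-a=L(u+w)$ gives
\[
\rho\ \ge\ \frac{1}{b-a}+\frac{t}{L}\ \ge\ 1+t .
\]
Together with your Steps 1 and 2, this proves both claims. The other admissible triangle is handled by the affine symmetry $(x_1,x_2)\mapsto(1-x_1-(1-t)x_2,\,x_2)$ of $K_t$.
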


This already covers the tightness claimed in Theorem~\ref{thm:main} for $n = 2$.
Verifying exact Banach--Mazur distances in higher dimensions is typically much more involved, especially when other parameters like the Minkowski asymmetry must also be controlled.
In our case, we can circumvent such problems by lifting the above planar example to higher dimensions.
Doing so requires a method of embedding convex bodies into higher dimensions under which the behavior of both the Minkowski asymmetry and the Banach--Mazur distance to the simplex is understood.
An appropriate choice of such a method turns out to be building cones over the lower-dimensional convex bodies.

The Minkowski asymmetry of the cones is handled by \cite[Theorem~2]{2002:GuoKaijser}.
The asymmetry simply jumps up by $1$ each time we lift a convex body to the next higher dimension.

\begin{proposition} \label{prop:cone_asymmetry}
Let $L \in \CK^{n-1}$ and $x \in \R^n \setminus \aff(\embedding{L}{0})$.
Then
\[
    s(\coneembedding{L}{0}{x})
    = s(L) + 1.
\]
\end{proposition}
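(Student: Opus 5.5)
The plan is to verify both inequalities directly from the definition of $s$, after normalizing by an affine map. Write $C := \coneembedding{L}{0}{x}$ and $s := s(L)$. The Minkowski asymmetry is invariant under affine bijections of $\R^n$. Since $x \notin \aff(\embedding{L}{0})$, we may apply an affine map that fixes $\R^{n-1} \times \{0\}$ setwise, shears $x$ onto the vertical axis and rescales its height. We then additionally translate inside $\R^{n-1}\times\{0\}$ so that a Minkowski center of $L$ sits at the origin. After these steps we may assume $x = (0,1)$, $0$ is a Minkowski center of $L$ (so $-L \subset sL$), and
\[
    C = \{ (z,\tau) : \tau \in [0,1],\ z \in (1-\tau) L \}.
\]

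\emph{Upper bound.} I would test the candidate center $p := (0,t)$ with $t := \frac{1}{s+2}$. For $(z,\tau) \in C$, the condition $(z,\tau) - p \in (s+1)(p - C)$ is equivalent to
\[
    p - \tfrac{1}{s+1}\big((z,\tau)-p\big) = \Big( -\tfrac{z}{s+1},\ \sigma \Big) \in C,
    \qquad \sigma := t - \tfrac{\tau - t}{s+1} = \tfrac{1-\tau}{s+1}.
\]
Clearly $\sigma \in [0,1]$. From $z \in (1-\tau)L$ and $-L \subset sL$ we get $-z \in (1-\tau)sL$. It therefore suffices that $\frac{(1-\tau)s}{s+1} \leq 1-\sigma = \frac{s+\tau}{s+1}$, which holds since $\tau \geq 0$. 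Hence $s(C) \leq s+1$.

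\emph{Lower bound.} Suppose $C - p \subset \rho(p - C)$ for some $\rho > 0$ and $p = (q,t)$; the goal is $\rho \geq s+1$.
\begin{itemize}
    \item Applying the inclusion to the apex, the point $p - \frac{1}{\rho}((0,1)-p)$ must lie in $C$. Its height is nonnegative, so $t \geq \frac{1}{\rho+1}$.
    \item Applying it to a base point $(z,0)$ with $z \in L$, the point $p - \frac{1}{\rho}((z,0)-p)$ has height $t(1+\frac1\rho)$ and horizontal part $q(1+\frac1\rho) - \frac{z}{\rho}$. Membership in $C$ requires $q(1+\frac{1}{\rho}) - \frac{z}{\rho} \in \big(1 - t(1+\tfrac1\rho)\big)L$.
    \item Multiplying by $\rho$, this gives $-z + (\rho+1)q \in \mu L$ for all $z \in L$, where $\mu := \rho - t(\rho+1)$. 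Thus a translate of $-L$ lies in $\mu L$.
\end{itemize}
Since $L$ is full-dimensional in $\R^{n-1}$, we must have $\mu > 0$; otherwise $L$ would be a point. The definition of $s(L)$ then gives $s \leq \mu \leq \rho - 1$, using $t(\rho+1) \geq 1$. Taking the infimum over admissible $\rho$ yields $s(C) \geq s+1$, and the equality follows.

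The only step needing care is the normalization. One must check that the shear-plus-translation is an affine bijection preserving both the cone structure and $s$, and that the Minkowski center of $L$ can be placed at the origin. This is routine, and the rest is the short computation above.
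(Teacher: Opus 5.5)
Your proof is correct. The paper gives no argument for this proposition; it imports it from \cite[Theorem~2]{2002:GuoKaijser}, so yours is a genuinely different, self-contained route. You normalize to apex $x=(0,1)$ with $0$ a Minkowski center of $L$. For the upper bound you certify $s+1$ with the explicit center $(0,\frac{1}{s+2})$. For the lower bound you test the containment only at the apex, which forces $t(\rho+1)\geq 1$, and along the base, which puts a translate of $-L$ inside $\mu L$ with $\mu=\rho-t(\rho+1)$.

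The citation keeps the paper short. Your computation costs about half a page but makes the tightness argument of Section~\ref{sec:tightness} independent of the reference. As a by-product, it identifies the Minkowski centers of the cone. For $\rho=s+1$ all your inequalities become equalities, which forces $t=\frac{1}{s+2}$ and $\mu=s$. Hence the Minkowski centers of $\conv((L\times\{0\})\cup\{x\})$ are exactly the points $\frac{s+1}{s+2}(c,0)+\frac{1}{s+2}x$ with $c$ a Minkowski center of $L$.

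Three small points should be stated explicitly when you write this up:
\begin{enumerate}
\item Perform the translation of $L$ \emph{before} the shear. A translation by $(-c,0)$ applied after the shear moves the apex off the vertical axis, whereas a shear fixing $\R^{n-1}\times\{0\}$ pointwise does not move $L$.
\item The inclusion $\frac{(1-\tau)s}{s+1}L\subset(1-\sigma)L$ uses $0\in L$. This follows from $-L\subset sL$ via $0=\frac{1}{s+1}y+\frac{s}{s+1}\cdot\frac{-y}{s}$ for any $y\in L$.
\item You must first read off $\mu\geq 0$ from the height condition $t(1+\frac{1}{\rho})\leq 1$ in the description of the cone. For $\mu<0$ the inclusion of a translate of $-L$ in $\mu L$ is not absurd. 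Only after that does full-dimensionality of $L$ exclude $\mu=0$.
\end{enumerate}
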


The Banach--Mazur distance from the cones to the simplex is covered by the following special case of Theorem~$1.4$ from the preprint \cite{2026:GrundbacherKobos}.
It states that the distance does not change provided that the distance to the simplex is not larger than $2$ for the initial convex body.
This restriction is of no concern to us since it still covers all trapezoids by Proposition~\ref{prop:trapezoid}.

\begin{proposition} \label{prop:cone_distance}
Let $L \in \CK^{n-1}$ with $d_{BM}(L,\simplex{n-1}) \leq 2$
and $x \in \R^n \setminus \aff(\embedding{L}{0})$.
Then
\[
    d_{BM}(\coneembedding{L}{0}{x}, \simplex{n})
    = d_{BM}(L, \simplex{n-1}).
\]
Moreover, if $T \subset L$ is an $(n-1)$-simplex with $R(L,T) = d_{BM}(L,\simplex{n-1})$, then $R(\coneembedding{L}{0}{x}, \coneembedding{T}{0}{x}) = d_{BM}(\coneembedding{L}{0}{x},\simplex{n})$.
\end{proposition}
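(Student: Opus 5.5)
The plan has two directions. The inequality $d_{BM}(C,\simplex{n}) \leq d_{BM}(L,\simplex{n-1})$, with $C := \coneembedding{L}{0}{x}$, comes from an explicit computation with Lemma~\ref{lem:simplex_circumradius}, and this computation also gives the ``moreover'' part. The reverse inequality is the real content, and it is where the hypothesis $d_{BM}(L,\simplex{n-1}) \leq 2$ has to enter.

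For the upper bound, take an $(n-1)$-simplex $T \subset L$ with $R(L,T) = d_{BM}(L,\simplex{n-1}) =: d$. Set $T' := \coneembedding{T}{0}{x}$, so $T' \subset C$. By an affine change of coordinates we may assume that $x = (0,1)$ and that $T'$ is centered at the origin. Write $T' = \bigcap_{i \in [n+1]} H^{\leq}_{(b^i,1)}$, where $b^{n+1}$ is the normal of the base facet and $b^1,\dots,b^n$ are the normals of the facets through the apex $x$.
\begin{itemize}
\item For the base normal, $h_C(b^{n+1}) = 1$, since both $C$ and $T'$ are supported by the base hyperplane.
\item For $i \leq n$, the apex lies on the $i$-th facet of $T'$, so $(b^i)^T x = 1$. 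On $L \times \{0\}$ the function $y \mapsto (b^i)^T(y,0)$ is an affine reparametrization of the corresponding facet functional $(a^i)^T y$ of $T$ in $\R^{n-1}$.
\end{itemize}
Since $h_L(a^i) \geq h_T(a^i)$, the maximum $h_C(b^i) = \max\{1, \text{value on } L\}$ is attained on $L$. Summing with Lemma~\ref{lem:simplex_circumradius} in both dimensions, the affine reparametrizations (whose constants are fixed by the centering and by $\sum b^i = 0$) should give $R(C,T') = R(L,T)$. I expect this to be routine bookkeeping, and it yields both $d_{BM}(C,\simplex{n}) \leq d$ and the second sentence of the proposition once the lower bound is known.

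For the lower bound, suppose $u + S \subset C \subset v + \rho S$ for some $n$-simplex $S$ and some $\rho$. I want to produce an $(n-1)$-simplex $S_0$ with $u_0 + S_0 \subset L \subset v_0 + \rho S_0$. The natural route is to intersect with the base hyperplane $H = \aff(\embedding{L}{0})$: we have $C \cap H = \embedding{L}{0}$, but $(v + \rho S) \cap H$ is in general not a simplex. So I would first normalize the pair of simplices by the following steps.
\begin{enumerate}
\item Since $L \times \{0\}$ is a facet of $C$ and $x$ is a vertex, I would argue that one may assume a facet of $v + \rho S$ lies in $H$ and that $v + \rho S$ has a vertex on the ray from $L$ through $x$. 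This uses a sliding or shrinking argument: replacing $v + \rho S$ by its truncation at $H$ and re-coning keeps a simplex containing $C$ without increasing the ratio.
\item Correspondingly, I would replace the inner simplex $u + S$ by the cone from $x$ over its ``shadow'' on the base. This needs the inner simplex to have a vertex that can be pushed to the apex, or a facet that can be pushed into $H$, while staying inside $C$ and remaining homothetic to the new outer simplex.
\end{enumerate}
After these normalizations, the sections at $H$ give the desired $(n-1)$-dimensional pair with ratio at most $\rho$, hence $d_{BM}(L,\simplex{n-1}) \leq \rho$.

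The main obstacle is step 2, that is, showing the optimal configuration can be chosen ``cone-compatible''. In general an optimal simplex pair for $C$ can be tilted, so that no vertex of the inner simplex sits at $x$ and no facet sits in $H$, and the sections are then not simplices. I would try a convexity or averaging argument: contract the pair towards the apex along the cone structure, and use $\rho \leq 2$ to guarantee that the outer simplex cannot have two vertices strictly on the apex side of $H$. The point is that if it did, the inner homothet of ratio $1/\rho \geq 1/2$ would be forced out of $C$ near the base. I expect exactly this counting of vertices on either side of $H$ to be where $d_{BM}(L,\simplex{n-1}) \leq 2$ is indispensable.
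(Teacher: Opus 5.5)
Your upper bound is correct, and the bookkeeping you postpone does close. Take $T$ centered at the origin of $\R^{n-1}$ with $h_T(a^i)=1$, the apex at $x=(0,1)$, and the base at height $-\frac1n$; centering $T'$ forces this, so $L$ sits at height $-\frac1n$ rather than $0$. The facet normals of $T'$ are then $b^i=(\frac{n+1}{n}a^i,1)$ for $i\in[n]$ and $b^{n+1}=(0,-n)$. Hence $h_C(b^{n+1})=1$ and $h_C(b^i)=\frac{n+1}{n}h_L(a^i)-\frac1n$, and Lemma~\ref{lem:simplex_circumradius} gives $R(C,T')=\frac1n\sum_{i\in[n]}h_L(a^i)=R(L,T)$.

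The genuine gap is the reverse inequality $d_{BM}(C,\simplex{n})\ge d_{BM}(L,\simplex{n-1})$. This is the whole content of the proposition, and your ``moreover'' part rests on it too. The paper itself does not prove it; it imports the statement as a special case of Theorem~1.4 of \cite{2026:GrundbacherKobos}.

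Your steps 1 and 2 are only asserted, and the operations you describe ignore the constraint that makes the problem hard. In $u+S\subset C\subset v+\rho S$ the inner and outer sets are positive homothets of the \emph{same} simplex. Cutting $v+\rho S$ by $H$ and re-coning does not in general return a simplex: a plane separating two vertices of a tetrahedron from the other two cuts it in a quadrilateral. Even when it does, the new simplex has a different shape, so nothing ensures that a homothet of it with ratio $1/\rho$ fits into $C$. The same objection applies to pushing the inner simplex onto the apex or into $H$. Your closing paragraph (contracting towards the apex, counting vertices on either side of $H$ using $\rho\le 2$) is a heuristic, not an argument.

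It helps to see exactly where the difficulty sits, and step 2 is in fact unnecessary. Measure height by the affine functional that vanishes on $H$ and equals $1$ at $x$, and suppose $S$ has a facet $F$ parallel to $H$.
\begin{itemize}
\item The facet $u+F$ lies at some height $t_0\in[0,1)$.
\item The section $C_{t_0}$ of $C$ at that height is a homothetic copy of $L$ with ratio $1-t_0>0$.
\item Since $v+\rho S\supset C$, the hyperplane of $v+\rho F$ lies in $\{t\le 0\}$ or in $\{t\ge 1\}$, and the opposite vertex of $v+\rho S$ lies in the other of these two halfspaces.
\item So the section of $v+\rho S$ at height $t_0$ is a translate of $\lambda\rho F$ with $\lambda\le 1$.
\end{itemize}
Thus $u+F\subset C_{t_0}\subset v'+\lambda\rho F$, which gives $d_{BM}(L,\simplex{n-1})\le\rho$, with no hypothesis on $L$ needed.

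Conversely, if the proposition holds, the cone $T'$ over an optimal $T$ is an optimal simplex for $C$ with a facet in $H$. So the lower bound is equivalent to the existence of an optimal simplex for $C$ with a facet parallel to $H$. What must be proved is that simplices tilted relative to the cone structure cannot beat untilted ones. Since the section argument does not use $d_{BM}(L,\simplex{n-1})\le 2$, this is the only place the hypothesis can enter. Your proposal contains no argument for this step, so as it stands it establishes only $d_{BM}(C,\simplex{n})\le d_{BM}(L,\simplex{n-1})$.
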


Putting all ingredients together provides
a direct proof of the tightness of Theorem~\ref{thm:main} on $\varepsilon$ for all $n$.

\begin{proof}[Proof of the tightness in Theorem~\ref{thm:main}]
We proceed by induction on $n$.
For $n = 2$, we can choose trapezoids according to Proposition~\ref{prop:trapezoid}.
The existence of trapezoids for all Minkowski asymmetry values in $[1,2]$ follows from the continuity of the Minkowski asymmetry on the space of convex bodies (with non-empty interior)
and continuously deforming a square to a triangle while keeping all intermediate bodies as trapezoids.

For the induction step from dimension $n-1 \geq 2$ to $n$,
we take any convex body $L \in \CK^{n-1}$
with the desired properties for dimension $n-1$ and some given $\varepsilon \in [0,1]$, such that
\[
    s(L) = (n-1) - \varepsilon
        \quad \text{and} \quad
    d_{BM}(L,\simplex{n-1}) = 1 + \varepsilon \leq 2.
\]
Propositions~\ref{prop:cone_asymmetry}~and~\ref{prop:cone_distance} show for any $x \in \R^n \setminus \aff(\embedding{L}{0})$ that
$K := \coneembedding{L}{0}{x}$ satisfies
\[
    s(K) = s(L) + 1 = n - \varepsilon
        \quad \text{and} \quad
    d_{BM}(K,\simplex{n}) = d_{BM}(L,\simplex{n-1}) = 1 + \varepsilon.
\]
This completes the induction and thus the proof.
\end{proof}

\section{Stability for the Distance to the Ball}\label{sec:StabBall}

As outlined in the introduction, we obtain Theorem~\ref{thm:AsymmetryDistance} from verifying the following more specific result about volume-minimal circumscribed ellipsoids,
the so-called \cemph{Loewner ellipsoids}.
We say that a convex body $K \in \CK^n$ is in \cemph{Loewner position}
if $\B_2^n$ is the unique ellipsoid of minimal volume containing $K$.
Our goal is to show that $K$ contains a sufficiently large copy of its Loewner ellipsoid depending on the Minkowski asymmetry of $K$.
The main novelty
of our proof is that, in contrast to the usual approach from the literature (cf.~\cite[Theorem~$9$]{2008:BelloniFreund}),
we allow varying centers for the inner ellipsoids instead of restricting to concentricity.
In the context of the Minkowski asymmetry, it is natural that the appropriate choice of the center should combine the center of the Loewner ellipsoid with the Minkowski center in case they deviate from each other.
The precise choice stated below guarantees particularly large inner ellipsoids as can be seen from the proof.

\begin{lemma} \label{lem:befrconj}
Let $K \in \CK^n$ be in Loewner position and let $c \in \R^n$ and $s \geq 1$ be chosen such that $K \subset s \cdot (c - K) + c$.
Then
\[
	\frac{1}{\sqrt{n s}} \B_2^n + \frac{s+1}{2s} c
	\subset K.
\]
\end{lemma}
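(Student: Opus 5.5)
The plan is to compare support functions. Since both sides are convex bodies, it suffices to show for every unit vector $a \in \R^n$ that
\[
    h_K(a) \geq \frac{s+1}{2s} a^T c + \frac{1}{\sqrt{ns}},
\]
because the right-hand side is the support function of $\frac{1}{\sqrt{ns}} \B_2^n + \frac{s+1}{2s} c$ at $a$. Throughout, write $q := \frac{s+1}{s} a^T c$, so that the target becomes $h_K(a) \geq \frac{q}{2} + \frac{1}{\sqrt{ns}}$.

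\emph{Step 1 (John's theorem and the asymmetry condition).} Since $K$ is in Loewner position, John's theorem provides contact points $u^1, \dots, u^m \in \bd(K) \cap \bd(\B_2^n)$ and weights $c_1, \dots, c_m > 0$ with
\[
    \sum_i c_i u^i = 0 \quad \text{and} \quad \sum_i c_i u^i (u^i)^T = I_n .
\]
Taking the trace of the second identity gives $\sum_i c_i = n$. Each $u^i$ lies in $K$. The hypothesis $K \subset s (c-K) + c$ is equivalent to $c + \frac{1}{s}(c - x) \in K$ for all $x \in K$, so the points $\frac{s+1}{s} c - \frac{1}{s} u^i$ also lie in $K$.

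\emph{Step 2 (confining the projections).} Set $t_i := a^T u^i$. The points from Step 1 give $h_K(a) \geq t_i$ and $h_K(a) \geq q - \frac{t_i}{s}$ for every $i$. Writing $h := h_K(a)$, these say
\[
    t_i \in [\, s(q - h),\, h \,] \quad \text{for all } i .
\]
Regard the $t_i$ as values of a random variable under the probability weights $c_i / n$. Its mean is $0$, since $\sum_i c_i u^i = 0$. Its second moment is $\frac{1}{n}\sum_i c_i t_i^2 = \frac{1}{n} a^T \big(\sum_i c_i u^i (u^i)^T\big) a = \frac{1}{n}$.

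\emph{Step 3 (variance bound).} For a random variable with mean $0$ taking values in $[\mu, M]$, the second moment is at most $-\mu M$, because $(t - \mu)(M - t) \geq 0$ pointwise and we can take expectations. Hence
\[
    \frac{1}{n} \leq s\, h\, (h - q) = s \Big( \big(h - \tfrac{q}{2}\big)^2 - \tfrac{q^2}{4} \Big) \leq s \big(h - \tfrac{q}{2}\big)^2 .
\]
This yields $\big|h - \frac{q}{2}\big| \geq \frac{1}{\sqrt{ns}}$.

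\emph{Step 4 (sign).} It remains to check that $h \geq \frac{q}{2}$, which turns the absolute-value bound into the claimed inequality.
\begin{itemize}
\item If $q \leq 0$, this follows from $h \geq \max_i t_i \geq 0$; the maximum is nonnegative because the weighted mean of the $t_i$ is zero.
\item If $q > 0$, take the convex combination of the two inequalities from Step 2 with weights $\frac{1}{s+1}$ and $\frac{s}{s+1}$. The $t_i$-terms cancel and we get $h \geq \frac{s}{s+1} q \geq \frac{q}{2}$, using $s \geq 1$.
\end{itemize}
So $h \geq \frac{q}{2} + \frac{1}{\sqrt{ns}}$, which is the desired support function inequality.

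The step I expect to carry the argument is Step 3: recognising that the two families of boundary points pin every $t_i$ into an interval whose endpoints multiply to $-s h (h-q)$. The specific center $\frac{s+1}{2s} c$ is exactly the one that completes the square in that product. The other steps are routine once John's decomposition is in hand.
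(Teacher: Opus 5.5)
Your proof is correct and follows the paper's argument. Both use the John decomposition and the variance bound $\sigma^2 \le (\alpha_{\max}-\mu)(\mu-\alpha_{\min})$, which you prove inline rather than cite from Belloni--Freund, applied to the two families of points $u^i$ and $\frac{s+1}{s}c-\frac{1}{s}u^i$ in $K$, then complete the square at the center $\frac{s+1}{2s}c$ and check the sign separately. The differences are cosmetic: you work with the unshifted projections $a^Tu^i$ rather than $a^T(u^i-\rho c)$, and you get $h_K(a)\ge\max\{0,a^Tc\}$ directly from the contact points instead of through the paper's separation argument for $0,c\in K$.
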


Before we proceed with the proof, let us point out some special cases of this lemma beside the application for the Minkowski asymmetry.
For $c = 0$ and $s = 1$ (when $K$ is symmetric) or $s = n$ (for general $K$),
the lemma reestablishes the known sharp results about the approximation of convex bodies by their Loewner ellipsoids.
Keeping $c = 0$ but considering the smallest possible choice of $s \in [1,n]$ for a given convex body $K$ smoothly interpolates between these cases and leads to precisely \cite[Theorem~$9$]{2008:BelloniFreund}.
In this case, the minimal $s \in [1,n]$ is also called the \emph{Loewner asymmetry} of $K$.
Further, the Minkowski--Radon theorem shows that we can choose $s = n$ when $c$ is the centroid of $K$.
Therefore, Lemma~\ref{lem:befrconj} implies that a convex body in Loewner position contains Euclidean balls of radius $\frac{1}{n}$ centered at any point between $0$ and $\frac{n+1}{2n}$ times its centroid.

To verify Lemma~\ref{lem:befrconj}, we use and modify the methods employed by Belloni and Freund in the proof of \cite[Theorem~9]{2008:BelloniFreund}.
The first ingredient is, naturally, John's theorem \cite{1948:John} characterizing Loewner ellipsoids.
The second one is \cite[Proposition~4]{2008:BelloniFreund}. We cite them as the two propositions below.

\begin{proposition}\label{prop:lownertheorem}
Let $K \in \CK^n$ be such that $K \subset \B_2^n$.
Then the following are equivalent:
\begin{enumerate}[(i)]
\item $K$ is in Loewner position.
\item There exist some $u^1, \dots, u^m \in K \cap \bd(\B_2^n)$ and $\lambda_1, \dots, \lambda_m > 0$ with
\[
    \sum_{i \in [m]} \lambda_i = n,
        \quad
    \sum_{i \in [m]} \lambda_i u^i = 0,
        \quad \text{and} \quad
    \sum_{i \in [m]} \lambda_i (x^T u^i) u^i = x \text{ for all } x \in \R^n.
\]
\end{enumerate}
\end{proposition}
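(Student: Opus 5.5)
Since this is the classical characterization due to John \cite{1948:John}, I would prove the two implications separately, writing ellipsoids as $E = A\B_2^n + b$ with $A$ symmetric positive definite, so that $\vol_n(E) = \det(A)\vol_n(\B_2^n)$.

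For (ii)$\Rightarrow$(i), let $E = A\B_2^n + b \supset K$ and set $M := A^{-2}$. Each $u^i \in K \subset E$ gives $\|A^{-1}(u^i - b)\|_2^2 \leq 1$. I multiply by $\lambda_i$ and sum. The identity $\sum_i \lambda_i (x^T u^i) u^i = x$ says that $\sum_i \lambda_i u^i (u^i)^T = I$, so $\sum_i \lambda_i (u^i)^T M u^i = \operatorname{tr}(M)$. The cross terms vanish because $\sum_i \lambda_i u^i = 0$. Together with $\sum_i \lambda_i = n$ this yields
\[
    \operatorname{tr}(M) + n\, b^T M b \leq n .
\]
Hence $\operatorname{tr}(M) \leq n$, and the AM--GM inequality on the eigenvalues of $M$ gives $\det(M) \leq 1$, that is, $\det(A) \geq 1$. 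So $\vol_n(E) \geq \vol_n(\B_2^n)$. Equality forces all eigenvalues of $M$ to be $1$, so $A = I$, and then $b^T b = 0$, so $b = 0$. Since $K \subset \B_2^n$, the ball is the unique volume-minimal circumscribed ellipsoid, which is (i).

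For (i)$\Rightarrow$(ii), I would use a separation argument in the space $V := \mathrm{Sym}_n \times \R^n$ with the trace inner product on the first factor. Let $U := K \cap \bd(\B_2^n)$, which is compact and non-empty by minimality. The claim is that $(\frac{1}{n} I, 0) \in \conv(\{ (uu^T, u) : u \in U \})$. Granting this, Carath\'eodory's theorem gives finitely many $u^i$ and positive weights $\mu_i$ summing to $1$. Setting $\lambda_i := n\mu_i$ gives $\sum_i \lambda_i u^i (u^i)^T = I$ and $\sum_i \lambda_i u^i = 0$. Taking the trace shows $\sum_i \lambda_i = n$ automatically, since $\|u^i\|_2 = 1$.

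If the claim failed, strict separation of this compact convex set from the point would give $(H, h) \in V$ with
\[
    u^T H u + h^T u < \frac{\operatorname{tr}(H)}{n} \quad \text{for all } u \in U .
\]
Replacing $H$ by $H - \frac{\operatorname{tr}(H)}{n} I$, which leaves the inequality unchanged on unit vectors, I may assume $\operatorname{tr}(H) = 0$, and then $u^T H u + h^T u < -\eta$ on $U$ for some $\eta > 0$. For small $\delta > 0$ I consider
\[
    E_\delta := \{ x : x^T (I + \delta H) x + \delta h^T x \leq 1 \},
\]
which is an ellipsoid. Its volume is, up to second order in $\delta$, $\det(I + \delta H)^{-1/2}$ times the volume factor coming from the linear term; the linear term only contributes at order $O(\delta^2)$ after completing the square. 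Since $\operatorname{tr}(H) = 0$, the first-order volume change vanishes, so this particular perturbation does not decrease volume at first order. I would fix this by using $x^T (I + \delta H) x + \delta h^T x \leq 1 - \delta\eta/2$ instead: this is a genuine first-order shrinkage of order $\delta\eta$. It still contains all points of $K$ near $U$, by the strict inequality together with continuity on a neighbourhood of $U$. It also contains all points of $K$ at distance $\geq \tau$ from $U$, because those satisfy $\|x\|_2^2 \leq 1 - \kappa$ for some $\kappa > 0$ by compactness, and the $O(\delta)$ perturbation cannot overcome that gap. The result is an ellipsoid containing $K$ with volume smaller than $\vol_n(\B_2^n)$, contradicting (i).

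The main obstacle is this last perturbation step: making the compactness split into a neighbourhood of $U$ and its complement rigorous, and checking the volume expansion of $E_\delta$ carefully, including the translation hidden in the linear term, to confirm that the volume decreases strictly.
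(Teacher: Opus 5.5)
Your proof is correct. Note that the paper does not prove this proposition: it quotes it as John's theorem \cite{1948:John} and uses it as a black box, mainly the direction (i)$\Rightarrow$(ii) in the proof of Lemma~\ref{lem:befrconj}. So you are supplying a self-contained proof where the paper relies on the literature.

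Your (ii)$\Rightarrow$(i) is the standard trace/AM--GM argument and is complete, including uniqueness via the equality case $M = I$, $b = 0$.

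For (i)$\Rightarrow$(ii), separating in $\mathrm{Sym}_n \times \R^n$ is the classical route. The step you flag as the main obstacle goes through exactly as you sketch it. Put $g(x) := x^T H x + h^T x$.
\begin{itemize}
\item \emph{Containment near $U$.} The open set $N := \{ g < -\eta/2 \}$ contains $U$. Every $x \in K \cap N$ satisfies $\|x\|_2^2 + \delta g(x) < 1 - \delta\eta/2$ for all $\delta > 0$, because $\|x\|_2 \leq 1$.
\item \emph{Containment away from $U$.} On the compact set $K \setminus N$ one has $\|x\|_2^2 \leq 1 - \kappa$, so any sufficiently small $\delta$ works there.
\item \emph{Volume.} Completing the square gives $\vol_n(E_\delta) = \vol_n(\B_2^n) \det(I + \delta H)^{-1/2} \big(1 - \delta\eta/2 + O(\delta^2)\big)^{n/2}$. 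Since $\operatorname{tr}(H) = 0$, we have $\det(I + \delta H) = 1 + O(\delta^2)$. Hence $\vol_n(E_\delta) = \vol_n(\B_2^n)\big(1 - \tfrac{n\eta}{4}\delta + O(\delta^2)\big)$, which is strictly smaller than $\vol_n(\B_2^n)$ for small $\delta > 0$.
\end{itemize}

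Compared with the citation, your version buys self-containment. It also explains where the centering condition $\sum_i \lambda_i u^i = 0$ comes from: the freedom to translate the ellipsoid, encoded by $h$. That is exactly the condition the paper later uses to obtain $\mu = -\rho a^T c$ in Lemma~\ref{lem:befrconj}.
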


In general, we say that points $u^1,\dots,u^m \in \bd(\B_2^n)$ together with weights $\lambda_1, \dots, \lambda_m > 0$ form a John decomposition if the conditions in $(ii)$ hold.

\begin{proposition} \label{prop:befr}
Let $\alpha_1, \dots, \alpha_m \in \R$ and let $\alpha_{\max}$, $\alpha_{\min}$ be the largest and smallest among them.
For $p_1, \dots, p_m \geq 0$ with $\sum_{i \in [m]} p_i = 1$,
define $\mu := \sum_{i \in [m]} p_i \alpha_i$ and $\sigma^2 := \sum_{i \in [m]} p_i (\alpha_i - \mu)^2$.
Then
\[
	\sigma^2
	\leq (\alpha_{\max} - \mu) (\mu - \alpha_{\min}).
\]
\end{proposition}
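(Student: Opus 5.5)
The idea is to weight a pointwise nonnegative quadratic expression by the $p_i$ and sum it; this is the Bhatia--Davis argument. Everything follows from the trivial inequality
\[
    (\alpha_{\max} - \alpha_i)(\alpha_i - \alpha_{\min}) \geq 0
        \quad \text{for all } i \in [m],
\]
which holds because $\alpha_{\min} \leq \alpha_i \leq \alpha_{\max}$.

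First, multiply the inequality for index $i$ by $p_i \geq 0$ and sum over $i \in [m]$. Expanding the product gives
\[
    \sum_{i \in [m]} p_i \big( (\alpha_{\max} + \alpha_{\min}) \alpha_i - \alpha_{\max} \alpha_{\min} - \alpha_i^2 \big) \geq 0.
\]
Using $\sum_{i \in [m]} p_i = 1$ and the definition of $\mu$, this becomes
\[
    (\alpha_{\max} + \alpha_{\min}) \mu - \alpha_{\max} \alpha_{\min} - \sum_{i \in [m]} p_i \alpha_i^2 \geq 0.
\]

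Second, rewrite the second moment in terms of the variance. Expanding $\sigma^2$ and again using $\sum_{i \in [m]} p_i = 1$ gives
\[
    \sum_{i \in [m]} p_i \alpha_i^2 = \sigma^2 + \mu^2.
\]

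Substituting this into the previous inequality yields
\[
    \sigma^2 \leq (\alpha_{\max} + \alpha_{\min}) \mu - \alpha_{\max} \alpha_{\min} - \mu^2 = (\alpha_{\max} - \mu)(\mu - \alpha_{\min}),
\]
where the last equality is checked by expanding the right-hand side. This is exactly the claim.

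There is no real obstacle. The only point requiring care is the bookkeeping with $\sum_{i \in [m]} p_i = 1$ in both expansions. The pointwise inequality needs no assumption beyond $\alpha_i \in [\alpha_{\min}, \alpha_{\max}]$. The argument also works verbatim if some $p_i$ vanish or some $\alpha_i$ coincide.
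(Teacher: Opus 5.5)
Your proof is correct and complete: the pointwise inequality $(\alpha_{\max}-\alpha_i)(\alpha_i-\alpha_{\min})\geq 0$, averaged with weights $p_i$ and combined with $\sum_{i\in[m]} p_i\alpha_i^2=\sigma^2+\mu^2$, gives exactly $(\alpha_{\max}-\mu)(\mu-\alpha_{\min})$. The paper gives no proof of this proposition and simply imports it as \cite[Proposition~4]{2008:BelloniFreund}, so your standard Bhatia--Davis argument serves as a self-contained substitute for that citation.
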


We are now ready to prove Lemma~\ref{lem:befrconj}.

\begin{proof}[Proof of Lemma~\ref{lem:befrconj}]
The assumption $s \geq 1$ directly implies that $\rho := \frac{s+1}{2s} \in [0,1]$.
From considering support functions, the claimed set containment follows once we prove for any $a \in \bd(\B_2^n)$ that
\[
	h_{K - \rho c}(a)
	\geq \frac{1}{\sqrt{n s}}.
\]
Since $K$ is in Loewner position, Proposition~\ref{prop:lownertheorem} yields some contact points $u^1, \dots, u^m \in K \cap \bd(\B_2^n)$ and weights $\lambda_1, \dots, \lambda_m > 0$ that form a John decomposition.
Our goal is to take advantage of Proposition~\ref{prop:befr}.
We define for $i \in [m]$ the scalar $\alpha_i := a^T (u^i - \rho c)$ and the weigth $p_i := \frac{\lambda_i}{n}$,
such that $\sum_{i \in [m]} p_i = 1$.
As in Proposition~\ref{prop:befr}, we define
\begin{align*}
	\mu
	& := \sum_{i \in [m]} p_i \alpha_i
	= \sum_{i \in [m]} \frac{\lambda_i}{n} a^T (u^i - \rho c)
	= \frac{1}{n} a^T \Bigg( \sum_{i \in [m]} \lambda_i u^i \Bigg) - \rho a^T c \sum_{i \in [m]} \frac{\lambda_i}{n}
	= - \rho a^T c
\intertext{
and
}
	\sigma^2
	& := \sum_{i \in [m]} p_i (\alpha_i - \mu)^2
	= \sum_{i \in [m]} \frac{\lambda_i}{n} \big( a^T (u^i - \rho c) + \rho a^T c \big)^2
	= \sum_{i \in [m]} \frac{\lambda_i}{n} (a^T u^i)^2
	\\
	& = \frac{1}{n} a^T \Bigg( \sum_{i \in [m]} \lambda_i (a^T u^i) u^i \Bigg)
	= \frac{a^T a}{n}
	= \frac{1}{n}.
\end{align*}
Next, we observe that
\begin{equation} \label{eq:max_ai}
	\mu
	\leq \max_{i \in [m]} \alpha_i
	= \max_{i \in [m]} a^T (u^i - \rho c)
	\leq h_{K - \rho c}(a).
\end{equation}
Furthermore, we have for any $i \in [m]$ that $u^i \in K \subset s \cdot (c - K) + c$ and consequently
\[
	\rho c - u^i
	\in \rho c - ( s \cdot (c - K) + c )
	= s \cdot (K - \rho c) - (s - s \rho + 1 - \rho) c
	= s \cdot (K - \rho c) - (s+1) (1-\rho) c.
\]
It follows that
\begin{equation} \label{eq:min_ai}
	- \min_{i \in [m]} \alpha_i
	= \max_{i \in [m]} a^T (\rho c - u^i)
	\leq h_{s \cdot (K - \rho c) - (s+1) (1-\rho) c}(a)
	= s h_{K - \rho c}(a) - (s+1) (1-\rho) a^T c.
\end{equation}
Now, Proposition~\ref{prop:befr} together with \eqref{eq:max_ai} and \eqref{eq:min_ai} shows
\begin{align*}
	\frac{1}{n}
	& = \sigma^2
	\leq \Big( \max_{i \in [m]} \alpha_i - \mu \Big) \Big( \mu - \min_{i \in [m]} \alpha_i \Big)
	\leq (h_{K - \rho c}(a) - \mu) \Big( \mu - \min_{i \in [m]} \alpha_i \Big)
	\\
	& \leq (h_{K - \rho c}(a) - \mu) (\mu + s h_{K - \rho c}(a) - (s+1) (1-\rho) a^T c)
	\\
	& = s h_{K - \rho c}(a)^2 + (\mu - (s+1) (1-\rho) a^T c - s \mu) h_{K - \rho c}(a) - \mu^2 + \mu (s+1) (1-\rho) a^T c
	\\
	& = s h_{K - \rho c}(a)^2 + (-\rho - (s+1) (1-\rho) + s \rho) (a^T c) h_{K - \rho c}(a) - \rho^2 (a^T c)^2 - (s+1) (1-\rho) \rho (a^T c)^2.
\end{align*}
Additionally, we observe that $\rho = \frac{s+1}{2s} \in [0,1]$ gives $(1-\rho) \rho \geq 0$ and
\[
	-\rho - (s+1) (1-\rho) + s \rho
	= -\rho - s + s \rho - 1 + \rho + s \rho
	=  2 s \rho - (s + 1)
	= 0.
\]
The above may therefore be simplified to
\begin{equation}\label{eq:ball_last_estimate}
	\frac{1}{n}
	\leq s h_{K - \rho c}(a)^2 - \rho^2 (a^T c)^2 - (s+1) (1-\rho) \rho (a^T c)^2
	\leq s h_{K - \rho c}(a)^2,
\end{equation}
which yields $\vert h_{K - \rho c}(a) \vert \geq 1/\sqrt{n s}$.
Thus, the proof of the claimed set containment is complete once we show $h_{K - \rho c}(a) \geq 0$.
To this end, we observe that $0 \in K$ since $K$ is in Loewner position.
Moreover, if $c \in K$ were wrong, then there would exist some hyperplane $H$ strongly separating $K$ and $c$.
In this case, $s \cdot (c - K) + c$ would clearly lie in the same halfspace bounded by $H$ as $c$.
Thus, $H$ would also strongly separate $K$ and $s \cdot (c - K) + c$, contradicting $K \subset s \cdot (c - K) + c$.
We thus conclude $c \in K$.
Finally, $\rho \in [0,1]$ gives $\rho c \in [0,c] \subset K$,
that is, $0 \in K - \rho c$.
It follows that $h_{K - \rho c}(a) \geq a^T 0 = 0$,
so the claimed set containment follows as discussed above.
\end{proof}

\begin{remark}\label{rem:ellipsoid_approx}
With Lemma~\ref{lem:befrconj}, we can determine for a convex body $K \in \CK^n$ how well its John ellipsoid $\CE_J(K)$ and its Loewner ellipsoid $\CE_L(K)$ approximate the actual Banach--Mazur distance between $K$ and the Euclidean ball.
The John ellipsoid theorem already gives
\[
    d_J(K,\B_2^n)
    := \inf \{ \rho \geq 0 : \CE_J(K) \subset K \subset v + \rho \CE_J(K), v \in \R^n \}
    \leq n
    \leq n \cdot d_{BM}(K,\B_2^n).
\]
Remarkably, the estimate from left to right is already of the optimal order in $n$,
as the factor $n$ cannot be reduced below $\frac{n+1}{4}$ for example for the convex body $K := \conv( \B_2^n \cup \{n u\})$ when $u \in \bd(\B_2^n)$.
It is easy to verify that $\CE_J(K) = \B_2^n$, $d_J(K,\B_2^n) \geq \frac{n+1}{2}$, and $d_{BM}(K,\B_2^n) \leq 2$.

In contrast, the approximation by Loewner ellipsoids provides a better worst case order in $n$.
Lemma~\ref{lem:befrconj} together with the well-known inequality $s(K) \leq d_{BM}(K,\B_2^n)$ (see, e.g., \cite[Proposition~$3.1$]{2017:BrandenbergGonzalezMerino}) 
shows that
\begin{align*}
    d_L(K,\B_2^n)
    & := \inf \{ \rho \geq 0 : u + \rho^{-1} \CE_L(K) \subset K \subset \CE_L(K), u \in \R^n \}
    \\
    & \leq \sqrt{n s(K)}
    \leq \sqrt{n} \cdot s(K)
    \leq \sqrt{n} \cdot d_{BM}(K,\B_2^n).
\end{align*}
In particular, when we take the better of the approximations by John and Loewner ellipsoids, we always have
\[
    \min \{ d_J(K,\B_2^n), d_L(K,\B_2^n) \}
    \leq \sqrt{n} \cdot d_{BM}(K,\B_2^n).
\]
Even this inequality is again of the optimal order in $n$:
The two quantities in the minimum equal $\sqrt{n-1}$ for
$K := \conv(\{ \pm \sqrt{n-1} \cdot u \} \cup ( \B_2^n \cap H_{(\pm v, 1/\sqrt{n-1})}^\leq ) )$ when $u,v \in \bd(\B_2^n)$ are orthogonal,
whereas this body again satisfies $d_{BM}(K,\B_2^n) \leq 2$.

Let us point out that the tightness result for John ellipsoids shows that the analog of Lemma~\ref{lem:befrconj} for John ellipsoids is \emph{not} true.
Otherwise, we would get order $\sqrt{n}$ like for the Loewner ellipsoid.

Further, notice that the example of tightness for John ellipsoids shows that Lemma~\ref{lem:befrconj} would be wrong if we insisted on using $0$ as center for the inballs.
Indeed, by polarity, it shows for any vector $u \in \bd(\B_2^n)$ that the convex body $K := \B_2^n \cap H_{(u,1/n)}^\leq$ is in Loewner position.
The largest ball contained in $K$ with center at $0$ has radius $\frac{1}{n}$.
However, $s(K) \leq d_{BM}(K,\B_2^n) \leq 2$ since $K$ is a body of revolution, so that $\sqrt{n s(K)} < n$ for $n \geq 3$.
\end{remark}

We end this section with the immediate consequences on the Banach--Mazur distance to $\B_2^n$, namely Theorems~\ref{thm:DistanceBall}~and~\ref{thm:AsymmetryDistance}.

\begin{proof}[Proof of Theorem~\ref{thm:AsymmetryDistance}]
We may assume that $K$ is in Loewner position.
Choosing $c \in \R^n$ to be any Minkowski center of $K$ and $s := s(K)$,
we have $K \subset s \cdot (c - K) + c$.
In this case, Lemma~\ref{lem:befrconj} shows that $K$ contains a Euclidean ball of radius $1/\sqrt{n s(K)}$.
Since $K$ is in Loewner position, we also have $K \subset \B_2^n$.
Altogether, the claimed inequality follows.

For the claimed sharpness, let $s \in [1,n]$ be an integer divisor of $n$, let $k := \frac{n}{s}$, and choose $K$ to be the convex hull of $k$ many $s$-simplices $T^1, \dots, T^k$ that are contained in $k$ respective, pairwise orthogonal subspaces and all have their Minkowski centers at $0$.
Since all $T^i$ are Minkowski centered with $s(T^i) = s$,
the reflected set $-sK$ clearly contains $T^i$ for all $i \in [k]$ and therefore also $K$.
It follows that $s(K) \leq s$.
For the reverse inequality, let $c \in \R^n$ such that $K \subset c - s(K) K$.
With $P$ the orthogonal projection onto the linear hull of $T^1$, we have $P(T^i) = \{0\}$ for all $i \geq 2$, so that
\[
    T^1
    = P(K)
    \subset P(c - s(K) K)
    = P(c) - s(K) T^1.
\]
Therefore, $s(K) \geq s(T^1) = s$, which verifies $s(K) = s$.
Lastly, since $d_{BM}(T^i,\B_2^s) = s$ for all $i \in [k]$, where this distance is realized by a pair of $0$-symmetric ellipsoids for each $i \in [k]$, it follows from \cite[Remark~$2.5$]{2026:GrundbacherKobos} that $d_{BM}(K,\B_2^n) = \sqrt{ k s^2 } = \sqrt{n s(K)}$, completing the proof.
\end{proof}

\begin{remark}\label{rem:Litvak}
For general $s \in [n]$, constructing $K \in \CK^n$ from simplices like above but using $\lfloor \frac{n}{s} \rfloor$ many $s$-simplices and one $(n - s \lfloor \frac{n}{s} \rfloor)$-simplex leads with essentially the same proof to $s(K) = s$ and
\[
    d_{BM}(K,\B_2^n)
    = \sqrt{ns - \bigg( n - s \bigg\lfloor \frac{n}{s} \bigg\rfloor \bigg) \bigg( s \bigg\lceil \frac{n}{s} \bigg\rceil - n \bigg)}.
\]
We remark that while this does not fully reach $\sqrt{n s}$ when $s$ does not divide $n$,
as a consequence of the inequality between the geometric mean and the arithmetic mean
together with a case distinction whether $s \leq \frac{n}{2}$,
we see that this distance is still at least
\[
    \sqrt{2 \big( \sqrt{2}-1 \big)} \cdot \sqrt{n s}
    > 0.91 \cdot \sqrt{n s}.
\]
\end{remark}

\begin{proof}[Proof of Theorem~\ref{thm:DistanceBall}]
Theorem~\ref{thm:AsymmetryDistance} shows that $n - \varepsilon \leq \sqrt{n s(K)}$.
Squaring both sides and rearranging gives
\[
	s(K)
	\geq n - 2 \varepsilon + \frac{\varepsilon^2}{n}
	> n - 2 \varepsilon
	> n - 1.
\]
Theorem~\ref{thm:main} therefore yields
\[
	d_{BM}(K,\simplex{n})
	< 1 + 2 \varepsilon + \frac{4 \varepsilon^2}{2 (1 - 2 \varepsilon)},
\]
which proves the claimed inequality.

When we restrict to $\varepsilon \leq \gamma$ for some $\gamma \in (0,\frac{1}{2})$,
then
\[
    d_{BM}(K,\simplex{n})
    < 1 + \bigg(2 + \frac{2 \gamma}{1 - 2 \gamma} \bigg) \varepsilon
\]
is a valid upper bound of linear order in $\varepsilon$.
To see that this order is optimal,
let $K \in \CK^n$ with $d_{BM}(K,\B_2^n) = n - \varepsilon$.
Then, by the multiplicative triangle inequality for the Banach--Mazur distance,
\[
	n
	= d_{BM}(\B_2^n,\simplex{n})
	\leq d_{BM}(K,\B_2^n) \cdot d_{BM}(K,\simplex{n})
	= (n - \varepsilon) d_{BM}(K,\simplex{n})
\]
and thus
\[
	d_{BM}(K,\simplex{n})
	\geq \frac{n}{n-\varepsilon}
	= 1 + \frac{\varepsilon}{n - \varepsilon}
	\geq 1 + \frac{\varepsilon}{n}. \qedhere
\]
\end{proof}

\section{Stability for the \texorpdfstring{$m$}{m}-th Order \texorpdfstring{$L_p$}{Lp}-Rogers--Shephard Inequality}\label{sec:RS}

Our goal in this section is to prove Theorems~\ref{thm:Lp_RSS}~and~\ref{thm:Lp_RSS_stability}.
In preparation, we introduce some notation and collect results from \cite{2025:Kotrbaty,2026:KotrbatyMouamine}.
Throughout, the integer $m \geq 1$ and the reals $p,q \in [1,\infty]$ such that $\frac{1}{p} + \frac{1}{q} = 1$ are understood as fixed parameters that are omitted from newly introduced notation for brevity.
We understand $\frac{x}{\infty} = 0$ for $x \in \R$.
We denote by $V(K[k],L[n-k])$ the \cemph{mixed volume} of $k \in \{0\} \cup [n]$ copies of $K \subset \R^n$ and $n-k$ copies of $L \subset \R^n$ when the sets are non-empty, compact, and convex (cf.~\cite[Chapter~$5$]{2014:Schneider}).
Using the representation of $p$-sums from \cite{2012:LutwakYangZhang}, we have
\[
    \diff{p}{m}{K}
    = \bigcup_{t \in [0,1]} \lyz{p}{m}{K}{t},
        \quad \text{where} \quad
    \lyz{p}{m}{K}{t}
    := (1-t)^{\frac{1}{q}} \diag{m}{K} - t^{\frac{1}{q}} \cartesian{K}{m}
    \text{ for all } t \in [0,1].
\]
For $z \in \diff{p}{m}{K}$, we define the set
\[
    \interval{p}{m}{K}{z}
    := \{ t \in [0,1] : z \in \lyz{p}{m}{K}{t} \}.
\]
It can be shown to be an interval, though we do not need this fact later.

Whenever $T \in \CK^n$ is a simplex with $0 \in T$ in this section,
we let its vertices be given by $v^1, \dots, v^{n+1} \in \R^n$.
Moreover, the facet outer normals of $T$ are always given by $a^1, \dots, a^{n+1} \in \R^n$ such that for all $i,j \in [n+1]$ with $i \neq j$,
we have $0 \leq (a^i)^T v^j = (a^i)^T v^i + 1 =: \beta_i$.
In particular, $T = \bigcap_{i \in [n+1]} H_{(a^i,\beta_i)}^\leq$.
This normalization of the $a^i$ results in
\begin{equation} \label{eq:simplex_normalization}
    \sum_{i \in [n+1]} a^i = 0
        \quad \text{and} \quad
    \sum_{i \in [n+1]} \beta_i = 1.
\end{equation}
Indeed, if $c_T$ is the (Minkowski) center of $T$, then $T - c_T$ is centered at the origin and
\begin{equation}\label{eq:simplex_normalization_centered}
    1
    = h_T(a^i) + h_T(-a^i)
    = h_{T-c_T}(a^i) + h_{T-c_T}(-a^i)
    = (n+1) h_{T-c_T}(a^i)
    = (n+1) (\beta_i - (a^i)^T c_T).
\end{equation}
The proof of Lemma~\ref{lem:simplex_circumradius} shows from $h_{T-c_T}(a^i) = \frac{1}{n+1}$ for all $i \in [n+1]$ that $\sum_{i \in [n+1]} (n+1) a^i = 0$
and therefore the first part of \eqref{eq:simplex_normalization}.
With \eqref{eq:simplex_normalization_centered}, we further obtain
$1 = \sum_{i \in [n+1]} \beta_i - (a^i)^T c_T = \sum_{i \in [n+1]} \beta_i$,
so the second part of \eqref{eq:simplex_normalization} is also true.

The following proposition for $k \leq n$ is the inequality part of \cite[Conjecture~$2$]{2025:Kotrbaty}, a generalization of the Godbersen conjecture.
It is stated in \cite{2026:KotrbatyMouamine} that the method used there to verify the inequality part of the usual Godbersen conjecture directly generalizes for the even more general \cite[Conjecture~$1.3$]{2025:Kotrbaty}, which we verified independently.
We give the resulting statement here with some further details.
The inequality for $k > n$, where both sides equal $0$, follows from \cite[Theorem~$5.1.8$]{2014:Schneider}.
The equality case for simplices is obtained from \cite[Proposition~$3.2$]{2025:Kotrbaty} together with the equality case in Schneider's inequality \eqref{eq:Schneider_RS}.

\begin{proposition}\label{prop:Godbersen}
Let $K \in \CK^n$ and $k \in \{0\} \cup [nm]$.
Then
\[
    V(-\diag{m}{K}[k],\cartesian{K}{m}[nm-k])
    \leq \binom{n}{k} \vol_n(K)^m,
\]
with equality when $K$ is a simplex.
\end{proposition}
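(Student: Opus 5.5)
The plan is to follow the route that Kotrbat\'y and Mouamine use for the classical case $m = 1$. We encode all the mixed volumes $V_k := V(-\diag{m}{K}[k],\cartesian{K}{m}[nm-k])$ in a single volume polynomial and bound it through a fibre-counting (Rogers--Shephard type) argument. The inequality itself needs no assumption on the position of $K$, since mixed volumes are translation invariant. The cases $k > n$ are trivial: both sides vanish, because $\diag{m}{K}$ is only $n$-dimensional (\cite[Theorem~$5.1.8$]{2014:Schneider}). So we restrict to $k \in \{0\} \cup [n]$.

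\textbf{Step 1 (volume polynomial and fibre integral).} For $t \in [0,1]$, write $M_t := t(-\diag{m}{K}) + (1-t)\cartesian{K}{m}$. Expanding mixed volumes gives
\[
\vol_{nm}(M_t) = \sum_{k=0}^{n} \binom{nm}{k} t^k (1-t)^{nm-k} V_k .
\]
Next, define for $z \in \cartesian{(\R^n)}{m}$ the fibre function
\[
\varphi_t(z) := \vol_n\big(\{ x \in K : z + t(x,\dots,x) \in (1-t)\cartesian{K}{m} \}\big).
\]
By Fubini, $\int \varphi_t = (1-t)^{nm}\vol_n(K)^{m+1}$, and $\varphi_t$ is supported exactly on $M_t$. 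The function $\varphi_t^{1/n}$ is concave on $M_t$ by Brunn--Minkowski, since the fibre is an intersection of $K$ with $m$ translates of a dilate of $K$ that depend affinely on $z$.

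\textbf{Step 2 (lower bound on the fibres along rays).} Rather than the crude bound $\varphi_t \le \vol_n(K)$, we use concavity along segments from a point where the fibre is all of $K$ up to the dilation. This yields a lower bound of the form $\varphi_t(z) \ge (1-\|z\|)^n_+\, \psi$, where $\|\cdot\|$ is the gauge of a body built from $M_t$ and $\psi$ is explicit. Integrating this bound and comparing with Step 1 gives an inequality between $\vol_{nm}(M_t)$, i.e.\ between the polynomial $\sum_k \binom{nm}{k} t^k(1-t)^{nm-k}V_k$, and Beta-type integrals. I would first check that for a simplex every inequality in this chain is an equality. This would give the target coefficients $\binom{n}{k}\vol_n(K)^m$ and at the same time the equality case, which can be cross-checked against \cite[Proposition~$3.2$]{2025:Kotrbaty} and the equality case of \eqref{eq:Schneider_RS}.

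\textbf{Step 3 (separating the coefficients).} This is where I expect the main obstacle. Step 2 controls a single combination of the $V_k$ for each $t$ (or for each scaling parameter), but the claim is coefficientwise. A one-parameter family of inequalities does not by itself imply the individual coefficient bounds. The idea I would try first is to strengthen the fibre argument by weighting it with a second parameter. Concretely, one can replace $t(x,\dots,x)$ by $t(x,\dots,x)$ with $x$ ranging over $K$, but count each $x$ with a weight $w(x)$ that varies over its position, e.g.\ along the ray to a fixed boundary point. The resulting integrated inequality then becomes an identity in an auxiliary variable $r$, of the form $\sum_k c_k(r)\big(\binom{n}{k}\vol_n(K)^m - V_k\big) \ge 0$. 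Here the $c_k(r)$ should be chosen as functions forming a Chebyshev system (e.g.\ Laplace-transform kernels $r^k e^{-r}$). Each coefficient then follows by an inversion or extremal-choice argument.

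If this separation fails, the fallback is to reduce to the classical $m=1$ statement of Kotrbat\'y--Mouamine. One would interpret $V_k$ via the $m$-fold product structure, namely Minkowski sums of the diagonal with products, which expand through the mixed volumes of $-K$ and $K$ in $\R^n$. The task would then be to check that the resulting combination is dominated termwise by the $m = 1$ Godbersen inequalities. Nonnegativity of the combinatorial multipliers is the key point to verify there.
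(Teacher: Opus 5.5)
There is a genuine gap, and it sits at the coefficientwise bound, which is the only non-routine part of the statement. Your Steps~1--2 are the Rogers--Shephard fibre argument. For each fixed $t$ it controls only the single combination $\sum_k \binom{nm}{k} t^k (1-t)^{nm-k} V_k = \vol_{nm}(M_t)$. Suppose the resulting bound were attained by simplices for every $t$. You would then know only that $\sum_k \binom{nm}{k} t^k (1-t)^{nm-k} d_k \geq 0$ on $[0,1]$, where $d_k := \binom{n}{k}\vol_n(K)^m - V_k$. A polynomial that is nonnegative on $[0,1]$ can still have negative Bernstein coefficients, e.g.\ $(1-2t)^2 = (1-t)^2 - 2t(1-t) + t^2$.

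The second parameter in Step~3 does not fix this as proposed. From $\sum_k c_k(r) d_k \geq 0$ for all $r$ you can conclude $d_k \geq 0$ only if the unit vector $e_k$ lies in the closed convex cone generated by the vectors $(c_0(r),\dots,c_n(r))$. For your kernels $c_k(r) = r^k e^{-r}$, Cauchy--Schwarz gives $c_{k-1} c_{k+1} \geq c_k^2$ for every vector of that cone, which excludes $e_k$ for all $0<k<n$. A Chebyshev property bounds the number of zeros of such combinations, not the signs of the $d_k$. So the proposal contains no mechanism that separates the coefficients, and that separation is exactly the content of the (higher-order) Godbersen inequality.

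The fallback also fails, because its premise is wrong. Already for $m=2$, the substitution $z = (u, u+w)$ gives
\[
\vol_{2n}\big(t(-\Delta^2 K) + K^{\times 2}\big) = \int_{\R^n} \vol_n\big((K \cap (K-w)) + t(-K)\big) \,\mathrm{d}w,
\]
hence $\binom{2n}{k} V_k = \binom{n}{k} \int_{\R^n} V\big(-K[k], (K \cap (K-w))[n-k]\big) \,\mathrm{d}w$. These are mixed volumes of $-K$ with the intersections $K \cap (K-w)$, not mixed volumes of $-K$ and $K$ alone.

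The natural way to feed in the $m=1$ inequalities is monotonicity, then the $m=1$ bound, then Rogers--Shephard for $\vol_n(K-K)$. This bounds the integral by $\binom{n}{k}\binom{2n}{n}\vol_n(K)^2$ rather than the required $\binom{2n}{k}\vol_n(K)^2$, a strict loss for $0<k<n$.

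For comparison, the paper does not reprove the statement. For $k \leq n$ it imports the inequality from \cite{2026:KotrbatyMouamine}, whose method for the classical Godbersen inequality directly generalizes to the higher-order setting. It handles $k>n$ by \cite[Theorem~5.1.8]{2014:Schneider}. It obtains equality for simplices from \cite[Proposition~3.2]{2025:Kotrbaty} together with the equality case of \eqref{eq:Schneider_RS}.

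Your treatment of $k>n$, of translation invariance, and of the equality case agrees with this. Without the coefficient-separating argument of \cite{2026:KotrbatyMouamine}, or a genuine substitute, the inequality for intermediate $k$ remains unproved.
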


For $k = 1$, the inclusion $-\diag{m} K + \diag{m} c \subset s(K) \diag{m} K \subset s(K) \cartesian{K}{m}$ for $c \in \R^n$ with $-K + c \subset s(K) K$ leads to the stronger inequality
\begin{equation}\label{eq:asymmetry_Godbersen}
    V(-\diag{m} K[1], \cartesian{K}{m}[nm-1])
    \leq s(K) V(\cartesian{K}{m}[nm])
    = s(K) \vol_{nm}(\cartesian{K}{m})
    = s(K) \vol_n(K)^m.
\end{equation}

As outlined in the introduction, the method for proving Theorem~\ref{thm:Lp_RSS} is essentially the one from \cite{2026:KotrbatyMouamine} for \eqref{eq:Lp_RS},
up to minor simplifications.
The approach presented there directly generalizes to the higher-order setting if Proposition~\ref{prop:Godbersen} is substituted for the verified inequality from the usual Godbersen conjecture.
The proof proceeds by finding a lower bound on the \cemph{length} $|\interval{p}{m}{K}{z}|$ for all $z \in \diff{p}{m}{K}$, and then integrating over both $|\interval{p}{m}{K}{z}|$ and this lower bound to obtain an upper bound on
$\vol_{nm}(\diff{p}{m}{K})$ in terms of the $\vol_{nm}(\lyz{p}{m}{K}{t})$.
Afterwards, applying the multilinearity of the mixed volume and Proposition~\ref{prop:Godbersen} to bound the $\vol_{nm}(\lyz{p}{m}{K}{t})$ yields the inequality.
The proof of Theorem~\ref{thm:Lp_RSS_stability} then uses \eqref{eq:asymmetry_Godbersen} to control the distance to the simplex, and a stability improvement of the bound on $|\interval{p}{m}{K}{z}|$ for simplices to control the positioning of $K$.

Our first goal is therefore to obtain a lower bound for $|\interval{p}{m}{K}{z}|$ analogous to the one from \cite{2026:KotrbatyMouamine}.
The method for general bodies $K$ is essentially the same albeit simplified.
For simplices, we also require the corresponding equality case and a stability improvement based on the positioning.
Therefore, we first provide a novel lemma characterizing the membership in $\lyz{p}{m}{T}{t}$ for a simplex $T$.

\begin{lemma}\label{lem:simplex_lyz}
Let $T \in \CK^n$ be a simplex with $0 \in T$,
$z = (z^1, \dots, z^m) \in \cartesian{(\R^n)}{m}$,
and $t \in [0,1]$.
Then
\[
	z \in \lyz{p}{m}{T}{t}
	\iff 0 \leq \sumfunc{p}{m}{T}{z}{t}
	:= \sum_{i \in [n+1]} \min \bigg\{ (1-t)^{\frac{1}{q}} \beta_i, t^{\frac{1}{q}} \beta_i + \min_{j \in [m]} (a^i)^T z^j \bigg\}.
\]
\end{lemma}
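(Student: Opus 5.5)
The plan is to unfold the definition of $\lyz{p}{m}{T}{t}$ into a system of linear inequalities in one auxiliary point $x\in\R^n$. Then I will use the normalization \eqref{eq:simplex_normalization} to decide when that system is feasible. Write $\alpha:=(1-t)^{\frac{1}{q}}$ and $\gamma:=t^{\frac{1}{q}}$, with the paper's conventions covering the endpoints and $q=\infty$. Both are nonnegative.

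\textbf{Step 1: reduce to a feasibility question.} By definition, $z\in\lyz{p}{m}{T}{t}=\alpha\diag{m}{T}-\gamma\cartesian{T}{m}$ if and only if there is some $x\in\alpha T$ with $x-z^j\in\gamma T$ for all $j\in[m]$. Since $T=\bigcap_{i\in[n+1]}H^\leq_{(a^i,\beta_i)}$, we have $\lambda T=\bigcap_i H^\leq_{(a^i,\lambda\beta_i)}$ for every $\lambda\geq 0$. This also holds for $\lambda=0$: the system $(a^i)^Tx\le 0$ for all $i$ forces $x=0$, because $\sum_i a^i=0$ and the $a^i$ span $\R^n$. Hence the conditions are equivalent to
\[
(a^i)^T x \le c_i := \min\Big\{\alpha\beta_i,\ \gamma\beta_i+\min_{j\in[m]}(a^i)^Tz^j\Big\}\quad\text{for all } i\in[n+1],
\]
and $\sumfunc{p}{m}{T}{z}{t}=\sum_i c_i$. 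So it suffices to show that the polyhedron $P(c):=\{x:(a^i)^Tx\le c_i,\ i\in[n+1]\}$ is nonempty if and only if $\sum_i c_i\ge 0$.

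\textbf{Step 2: necessity.} If some $x$ lies in $P(c)$, summing the $n+1$ inequalities and using $\sum_i a^i=0$ from \eqref{eq:simplex_normalization} gives $0\le\sum_i c_i$.

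\textbf{Step 3: sufficiency.} Suppose $\sum_i c_i\ge 0$. Any $n$ of the facet normals $a^i$ of the simplex $T$ are linearly independent, since the $n$ facets through a vertex meet only in that vertex. So there is a unique $x$ with $(a^i)^Tx=c_i$ for $i\in[n]$. Then $\sum_i a^i=0$ yields
\[
(a^{n+1})^Tx=-\sum_{i\in[n]}c_i\le c_{n+1},
\]
so $x\in P(c)$. This completes the equivalence.

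There is no real obstacle here. The only care needed is that the halfspace description of the dilate $\lambda T$ remains valid in the degenerate cases $\lambda=0$, which arise at $t\in\{0,1\}$ when $q<\infty$. That is handled by the spanning remark in Step 1, and with it the argument is uniform in $t$, $p$ and $m$.
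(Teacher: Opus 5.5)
Your proof is correct and is essentially the paper's argument: necessity by summing the facet inequalities using $\sum_i a^i = 0$, and sufficiency by solving $n$ of the equations $(a^i)^T x = c_i$ and letting $\sum_i a^i = 0$ control the remaining one. The paper instead spreads the slack evenly, subtracting $\frac{1}{n+1}\sum_i c_i$ in all $n+1$ constraints, while you put it entirely on the last; both work equally well, and your explicit treatment of the dilation factor $0$ at $t \in \{0,1\}$ settles a small point the paper passes over silently.
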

\begin{proof}
First, assume that $z \in \lyz{p}{m}{T}{t}$.
Then there exist $x, y^1, \dots, y^m \in T$ with $z^j = (1-t)^{\frac{1}{q}} x - t^{\frac{1}{q}} y^j$ for all $j \in [m]$.
For any $i \in [n+1]$ and $j \in [m]$, we have
\[
	(1-t)^{\frac{1}{q}} (a^i)^T x \leq (1-t)^{\frac{1}{q}} \beta_i
		\quad \text{and} \quad
	(1-t)^{\frac{1}{q}} (a^i)^T x = t^{\frac{1}{q}} (a^i)^T y^j + (a^i)^T z^j \leq t^{\frac{1}{q}} \beta_i + (a^i)^T z^j.
\]
In particular, if we always choose $j = j(i) \in [m]$ to minimize $(a^i)^T z^j$,
then $\sum_{i \in [n+1]} a^i = 0$ leads to
\[
	0
	= \sum_{i \in [n+1]} (1-t)^{\frac{1}{q}} (a^i)^T x
	\leq \sum_{i \in [n+1]} \min \bigg\{ (1-t)^{\frac{1}{q}} \beta_i, t^{\frac{1}{q}} \beta_i + \min_{j \in [m]} (a^i)^T z^j \bigg\}
	= \sumfunc{p}{m}{T}{z}{t}.
\]

Now, assume that $\sumfunc{p}{m}{T}{z}{t} \geq 0$.
Choose $x \in \R^n$ as the unique solution of the $n$ linear equations
\[
	(a^i)^T x
	= \min \bigg\{ (1-t)^{\frac{1}{q}} \beta_i, t^{\frac{1}{q}} \beta_i + \min_{j \in [m]} (a^i)^T z^j \bigg\} - \frac{\sumfunc{p}{m}{T}{z}{t}}{n+1}
\]
for $i \in [n]$.
Since $\sum_{i \in [n+1]} a^i = 0$, this equation must also be satisfied for $i = n+1$.
Additionally, we define $y^j = x - z^j$ for all $j \in [m]$.
Then $\sumfunc{p}{m}{T}{z}{t} \geq 0$ implies $(a^i)^T x \leq (1-t)^{\frac{1}{q}} \beta_i$ for all $i \in [n+1]$ and thus $x \in (1-t)^{\frac{1}{q}} T$.
Moreover, for all $i \in [n+1]$ and $j \in [m]$,
\[
	(a^i)^T y^j
	= (a^i)^T x - (a^i)^T z^j
	\leq t^{\frac{1}{q}} \beta_i + \min_{k \in [m]} (a^i)^T z^k - (a^i)^T z^j
	\leq t^{\frac{1}{q}} \beta_i,
\]
which yields $y^j \in t^{\frac{1}{q}} T$.
Consequently,
\[
	z
	= (z^1, \dots, z^m)
	= (x, \dots, x) - (y^1, \dots, y^m)
	\in (1-t)^{\frac{1}{q}} \diag{m} T - t^{\frac{1}{q}} \cartesian{T}{m}
	= \lyz{p}{m}{T}{t}
\]
completes the proof.
\end{proof}

\begin{lemma}\label{lem:general_iz}
Let $K \in \CK^n$ with $0 \in K$, $z \in \diff{p}{m}{K}$,
and suppose that $p > 1$ (and $q < \infty$).
Then
\[
	|\interval{p}{m}{K}{z}|
	\geq 1 - \|z\|_{\diff{p}{m}{K}}^q,
\]
with equality if $K$ is a simplex with a vertex at $0$.
\end{lemma}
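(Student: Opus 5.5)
The plan is to prove the inequality by a direct scaling argument that uses only $0 \in K$, and then to get the equality for simplices from Lemma~\ref{lem:simplex_lyz}.

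\textbf{Inequality.} Set $r := \|z\|_{\diff{p}{m}{K}} \le 1$. The case $r=0$ is trivial, since $0 \in \lyz{p}{m}{K}{t}$ for all $t$. Otherwise, since $\diff{p}{m}{K}$ is the union of the $\lyz{p}{m}{K}{t}$, compactness gives some $t_0 \in [0,1]$ with $z \in r\,\lyz{p}{m}{K}{t_0}$. So we can write
\[
z = r(1-t_0)^{\frac1q}(x,\dots,x) - r t_0^{\frac1q}(y^1,\dots,y^m)
\]
with $x,y^j \in K$. Now take any $t \in [0,1]$ with $(1-t)^{\frac1q} \ge r(1-t_0)^{\frac1q}$ and $t^{\frac1q} \ge r t_0^{\frac1q}$. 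Put $\lambda := r(1-t_0)^{\frac1q}/(1-t)^{\frac1q}$ and $\mu := r t_0^{\frac1q}/t^{\frac1q}$, both in $[0,1]$, with the convention $0/0 = 0$. Because $0 \in K$ and $K$ is convex, $\lambda x \in K$ and $\mu y^j \in K$. Hence
\[
z = (1-t)^{\frac1q}\diag{m}{(\lambda x)} - t^{\frac1q}(\mu y^1,\dots,\mu y^m) \in \lyz{p}{m}{K}{t}.
\]
The two conditions on $t$ read $t \ge r^q t_0$ and $t \le 1 - r^q(1-t_0)$. So $\interval{p}{m}{K}{z}$ contains an interval of length $1 - r^q$.

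\textbf{Equality for simplices.} Let $K = T$ be a simplex with vertex $v^k = 0$. Since $(a^i)^T v^k = \beta_i$ for $i \ne k$, we get $\beta_i = 0$ for all $i \ne k$, and then $\beta_k = 1$ by \eqref{eq:simplex_normalization}. Write $b_i := \min_j (a^i)^T z^j$ and $N := -\sum_{i\ne k}\min\{0,b_i\} \ge 0$. Lemma~\ref{lem:simplex_lyz} says that $z \in \lyz{p}{m}{T}{t}$ if and only if
\[
\min\{(1-t)^{\frac1q},\, t^{\frac1q} + b_k\} \ge N .
\]
With $B := \max\{0, N - b_k\}$, this gives $\interval{p}{m}{T}{z} = [B^q, 1 - N^q]$, so $|\interval{p}{m}{T}{z}| = 1 - N^q - B^q$.

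It remains to compute the gauge. The proof of Lemma~\ref{lem:simplex_lyz} works verbatim for $rT$, which has the same $a^i$ and scaled $\beta_i$. Hence $z \in r\,\lyz{p}{m}{T}{t_0}$ if and only if $r(1-t_0)^{\frac1q} \ge N$ and $r t_0^{\frac1q} \ge N - b_k$, i.e. $r^q(1-t_0) \ge N^q$ and $r^q t_0 \ge B^q$. Minimizing $r$ over $t_0$ gives $\|z\|^q_{\diff{p}{m}{T}} = N^q + B^q$, by adding the two inequalities and choosing $t_0 = B^q/(N^q + B^q)$. This yields equality.

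\textbf{Main obstacle.} I expect the hardest step to be this gauge computation: checking that scaling commutes with the description in Lemma~\ref{lem:simplex_lyz}, and handling the degenerate cases where $N = 0$ or $B = 0$.
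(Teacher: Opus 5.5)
Your proposal is correct and follows the paper's proof essentially step for step. For the inequality you use the same rescaling of a representation $z = r((1-t_0)^{1/q}x - t_0^{1/q}y)$ together with $0 \in K$; for a simplex with a vertex at $0$ you use the same reduction via Lemma~\ref{lem:simplex_lyz} to $I_T(z) = [B^q, 1-N^q]$, followed by degree-$q$ homogeneity to identify $N^q + B^q$ with $\|z\|_{D_p^m T}^q$ (these are the paper's $f_1 + f_2$). On the degenerate cases you flagged: $N - b_k \geq 0$ always holds, so the $\max\{0,\cdot\}$ in $B$ is unnecessary, and applying the lemma to $z/r$ instead of to $rT$ avoids any question about the normalization of the $a^i$.
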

\begin{proof}
There exist $x \in \diag{m}{K}$, $y \in \cartesian{K}{m}$, and $t_0 \in [0,1]$ with
\[
	z
	= \|z\|_{\diff{p}{m}{K}} \Big( (1-t_0)^{\frac{1}{q}} x - t_0^{\frac{1}{q}} y \Big).
\]
If $t \in [0,1]$ satisfies $\|z\|_{\diff{p}{m}{K}}^q t_0 \leq t \leq 1 - \|z\|_{\diff{p}{m}{K}}^q (1-t_0)$, then
\[
	\|z\|_{\diff{p}{m}{K}} (1-t_0)^{\frac{1}{q}} \leq (1-t)^{\frac{1}{q}}
		\quad \text{and} \quad
	\|z\|_{\diff{p}{m}{K}} t_0^{\frac{1}{q}} \leq t^{\frac{1}{q}}.
\]
Together with $0 \in \diag{m}{K} \cap \cartesian{K}{m}$, it follows that
\[
	z
	\in \|z\|_{\diff{p}{m}{K}} (1-t_0)^{\frac{1}{q}} \diag{m}{K} - \|z\|_{\diff{p}{m}{K}} t_0^{\frac{1}{q}} \cartesian{K}{m}
	\subset (1-t)^{\frac{1}{q}} \diag{m}{K} - t^{\frac{1}{q}} \cartesian{K}{m}
	= \lyz{p}{m}{K}{t}.
\]
This means $\big [\|z\|_{\diff{p}{m}{K}}^q t_0, 1 - \|z\|_{\diff{p}{m}{K}}^q (1-t_0) \big] \subset \interval{p}{m}{K}{z}$, which yields the claimed inequality.

For the equality case, we may assume that $K = T$ is a simplex and $v^{n+1} = 0$.
Then $\beta_i = 0$ for all $i \in [n]$, and $\beta_{n+1} = 1$ from $\sum_{i \in [n+1]} \beta_i = 1$, so $\sum_{i\in[n+1]} a^i = 0$ shows
\[
	\sumfunc{p}{m}{T}{z_0}{t_0}
	= \min \bigg\{ (1-t_0)^{\frac{1}{q}}, t_0^{\frac{1}{q}} - \max_{j \in [m]} \sum_{i \in [n]} (a^i)^T z_0^j \bigg\} + \sum_{i \in [n]} \min \bigg\{ 0, \min_{j \in [m]} (a^i)^T z_0^j \bigg\}
\]
for any $z_0 = (z_0^1, \dots, z_0^m) \in \cartesian{(\R^n)}{m}$ and $t_0 \in [0,1]$.
By Lemma~\ref{lem:simplex_lyz}, we have $z_0 \in \diff{p}{m}{T}$ if and only if there exists $t_0 \in [0,1]$ with $\sumfunc{p}{m}{T}{z_0}{t_0} \geq 0$, that is,
\[
	\Bigg( \max_{j \in [m]} \sum_{i \in [n]} (a^i)^T z_0^j - \sum_{i \in [n]} \min \bigg\{ 0, \min_{j \in [m]} (a^i)^T z_0^j \bigg\} \Bigg)^q
	\leq t_0
	\leq 1 - \Bigg( - \sum_{i \in [n]} \min \bigg\{ 0, \min_{j \in [m]} (a^i)^T z^j \bigg\} \Bigg)^q.
\]
Writing $f_1(z_0)$ and $1 - f_2(z_0)$ for the left- and right-hand terms, respectively,
this means that $z_0 \in \diff{p}{m}{T}$ is equivalent to $f_1(z_0) + f_2(z_0) \leq 1$.
Since $f_1$ and $f_2$ are positively homogeneous of degree $q$,
it follows that $f_1(z_0) + f_2(z_0) = \|z_0\|_{\diff{p}{m}{T}}^q$.
Indeed, we have for any $\rho \geq 0$ that $z_0 \in \rho \diff{p}{m}{T}$ if and only if $f_1(z_0) + f_2(z_0) \leq \rho^q$. The smallest $\rho \geq 0$ satisfying these conditions is $\|z\|_{\diff{p}{m}{T}}$ by the former, and $(f_1(z_0) + f_2(z_0))^{\frac{1}{q}}$ by the latter.
Lastly, we also see from the above for $z \in \diff{p}{m}{T}$ that
\[
	|\interval{p}{m}{T}{z}|
	= (1 - f_2(z)) - f_1(z)
	= 1 - \|z\|_{\diff{p}{m}{T}}^q,
\]
completing the proof of the lemma.
\end{proof}

For the stability improvement of the above lemma,
given a simplex $T \in \CK^n$ with $0 \in T$,
we define for any $\mu \geq 0$ the set
\[
	\simplexscale{T}{\mu}
	:= \mu (T - v^{n+1})
    = \bigcap_{i \in [n]} H_{(a^i,0)}^\leq \cap H_{(a^{n+1},\mu)}^\leq.
\]
Moreover, for $z = (z^1, \dots, z^m) \in \cartesian{(\R^n)}{m}$ we write
\[
	\simplexscaling{T}{z}
	:= \max_{j \in [m]} (a^{n+1})^T z^j.
\]

\begin{lemma}\label{lem:simplex_iz}
Let $T \in \CK^n$ be a simplex with $0 \in T$,
$z \in - \cartesian{\simplexscale{T}{\min \{ \beta_{n+1}, 2^{-\frac{1}{q}} \}}}{m}$,
and suppose that $p > 1$ (and $q < \infty$).
Then $z \in \diff{p}{m}{T}$ and
\[
    |\interval{p}{m}{T}{z}|
	\geq 1 - \|z\|_{\diff{p}{m}{T}}^q + \simplexscaling{T}{-z}^q \min \bigg\{ \frac{1-\beta_{n+1}}{\beta_{n+1}}, \frac{\beta_{n+1}}{1-\beta_{n+1}} \bigg\}.
\]
\end{lemma}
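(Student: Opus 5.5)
The plan is to apply Lemma~\ref{lem:simplex_lyz} directly. We exhibit an explicit interval of admissible $t$, and compare its length with a lower bound for $\|z\|_{\diff{p}{m}{T}}$ obtained from a single support-function computation.

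\textbf{Step 1: translating the hypothesis.} Write $\beta := \beta_{n+1}$ and $\mu := \simplexscaling{T}{-z}$. The assumption $z \in -\cartesian{\simplexscale{T}{\min\{\beta,2^{-1/q}\}}}{m}$ means $(a^i)^T z^j \geq 0$ for all $i \in [n]$ and $j \in [m]$, and also $\mu \leq \min\{\beta, 2^{-1/q}\}$. Since $\sum_i a^i = 0$, we get $-(a^{n+1})^T z^j = \sum_{i \in [n]} (a^i)^T z^j \geq 0$, so $\mu \geq 0$ and $\min_j (a^{n+1})^T z^j = -\mu$. Put $c_i := \min_j (a^i)^T z^j \geq 0$ for $i \in [n]$. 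Then each term of $\sumfunc{p}{m}{T}{z}{t}$ with $i \leq n$ is at least $\min\{(1-t)^{1/q}, t^{1/q}\}\beta_i$. Using $\sum_{i \in [n]} \beta_i = 1-\beta$ from \eqref{eq:simplex_normalization}, this gives
\[
\sumfunc{p}{m}{T}{z}{t} \geq \min\{(1-t)^{1/q},t^{1/q}\}(1-\beta) + \min\{(1-t)^{1/q}\beta,\ t^{1/q}\beta - \mu\}.
\]
The first branch of the last minimum makes the right-hand side nonnegative automatically. So by Lemma~\ref{lem:simplex_lyz}, $t \in \interval{p}{m}{T}{z}$ as soon as $g(t) := \min\{(1-t)^{1/q},t^{1/q}\}(1-\beta) + t^{1/q}\beta \geq \mu$.

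\textbf{Step 2: an explicit interval.} For $t \leq \tfrac12$ the condition $g(t) \geq \mu$ reduces to $t \geq \mu^q$. For $t \geq \tfrac12$ it reads $\phi(t) := (1-t)^{1/q}(1-\beta) + t^{1/q}\beta \geq \mu$; here $\phi$ is concave and $\phi(1) = \beta \geq \mu$. The hypothesis $\mu \leq 2^{-1/q}$ gives $\mu^q \leq \tfrac12$, so $\interval{p}{m}{T}{z} \supset [\mu^q, 1]$. In particular $z \in \diff{p}{m}{T}$, and $|\interval{p}{m}{T}{z}| \geq 1-\mu^q$. (Possibly $\phi \geq \mu$ already forces this; the point is that the whole upper range is admissible because $\mu \leq \beta$.)

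\textbf{Step 3: lower bound on the norm.} Test against the functional $w \mapsto -(a^{n+1})^T w^{j_0}$, where $j_0$ attains $\mu$. By the support function of $\lyz{p}{m}{T}{t}$ this gives
\[
\mu \leq \|z\|_{\diff{p}{m}{T}} \max_{t \in [0,1]} \Big( (1-t)^{1/q} h_T(-a^{n+1}) + t^{1/q} h_T(a^{n+1}) \Big) = \|z\|_{\diff{p}{m}{T}} \big((1-\beta)^p + \beta^p\big)^{1/p},
\]
by Hölder, using $h_T(a^{n+1}) = \beta$ and $h_T(-a^{n+1}) = 1-\beta$. Hence $\|z\|^q_{\diff{p}{m}{T}} \geq \mu^q \big((1-\beta)^p+\beta^p\big)^{-q/p}$.

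\textbf{Step 4: the final inequality (main obstacle).} It remains to show
\[
1-\mu^q \geq 1 - \mu^q\big((1-\beta)^p+\beta^p\big)^{-q/p} + \mu^q\min\Big\{\tfrac{1-\beta}{\beta},\tfrac{\beta}{1-\beta}\Big\},
\]
that is, $\big((1-\beta)^p+\beta^p\big)^{-q/p} \geq 1 + \min\{\tfrac{1-\beta}{\beta},\tfrac{\beta}{1-\beta}\}$. This is a purely one-variable inequality, and it is the step I expect to be delicate. By symmetry in $\beta \leftrightarrow 1-\beta$, take $\beta \geq \tfrac12$. Then the right-hand side equals $1/\beta$, and the claim becomes $(1-\beta)^p + \beta^p \leq \beta^{p/q} = \beta^{p-1}$. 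This follows from $\beta^p + (1-\beta)^p \leq \beta^{p-1}\beta + \beta^{p-1}(1-\beta)$, since $(1-\beta)^{p-1} \leq \beta^{p-1}$.

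If this crude chain turned out to lose too much, I would refine Step 2 instead: enlarge the interval using the nonnegative contributions $c_i$ and the exact form of $\phi$ near $t = 1$, rather than only sharpen Step 3.
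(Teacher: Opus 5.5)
Your proof is correct and follows the paper's argument essentially step for step. The shared steps are the same lower bound on $\sumfunc{p}{m}{T}{z}{t}$ via $(a^i)^T z^j \ge 0$ for $i \in [n]$, and the admissible interval $[\mu^q,1]$; on $[\frac12,1]$ the latter uses concavity with both endpoint values $2^{-1/q}$ and $\beta$ at least $\mu$, which your hypotheses give. They also include the test functional with $j_0$-th component $-a^{n+1}$, and the one-variable inequality $((1-\beta)^p+\beta^p)^{q/p} \le \max\{\beta,1-\beta\}$, which you verify explicitly where the paper only calls it an easy computation. The differences are cosmetic: you deduce $z \in \diff{p}{m}{T}$ from $\interval{p}{m}{T}{z} \neq \emptyset$, and you evaluate the support value by H\"older over $t$ instead of from the $p$-sum formula. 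Your closing hedge is unnecessary, since Step~4 already closes the argument.
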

\begin{proof}
Clearly, any $x \in \simplexscale{T}{\min \{ \beta_{n+1}, 2^{-\frac{1}{q}} \}}$ satisfies $(a^i)^T x \leq 0 \leq \beta_i$ for all $i \in [n]$.
Moreover, $(a^{n+1})^T x \leq \beta_{n+1}$,
so that $x \in T$.
Thus, $-\cartesian{\simplexscale{T}{\min \{ \beta_{n+1}, 2^{-\frac{1}{q}} \}}}{m} \subset -\cartesian{T}{m} = \lyz{p}{m}{T}{1} \subset \diff{p}{m}{T}$.

Next, we derive a lower bound on $|\interval{p}{m}{T}{z}|$ from  Lemma~\ref{lem:simplex_lyz}.
Writing $z = (z^1, \dots, z^m) \in \cartesian{(\R^n)}{m}$, we use $(a^i)^T z^j \geq 0$ for $i \in [n]$ and $j \in [m]$ by $z \in -\cartesian{\simplexscale{T}{\min\{ \beta_{n+1}, 2^{-\frac{1}{q}} \}}}{m}$ together with $\sum_{i \in [n+1]} \beta_i = 1$ to estimate for $t \in [0,1]$ that
\begin{align*}
	\sumfunc{p}{m}{T}{z}{t}
	& \geq  \min \bigg\{ (1-t)^{\frac{1}{q}} \beta_{n+1}, t^{\frac{1}{q}} \beta_{n+1} + \min_{j \in [m]} (a^{n+1})^T z^j \bigg\}
		+ \sum_{i \in [n]} \min \Big\{ (1-t)^{\frac{1}{q}}, t^{\frac{1}{q}} \Big\} \beta_i
	\\
	& = \min \bigg\{ (1-t)^{\frac{1}{q}} \beta_{n+1}, t^{\frac{1}{q}} \beta_{n+1} - \simplexscaling{T}{-z} \bigg\}
		+ \min \Big\{ (1-t)^{\frac{1}{q}}, t^{\frac{1}{q}} \Big\} (1 - \beta_{n+1}).
\end{align*}
If the left-hand minimum in the term in the last line is given by $(1-t)^{\frac{1}{q}} \beta_{n+1}$,
then the term in the last line is clearly non-negative, so that $t \in \interval{p}{m}{T}{z}$.
Otherwise, the term in the last line is non-negative if and only if
\begin{equation}\label{eq:muz_cond}
	t^{\frac{1}{q}} \beta_{n+1} + \min \Big\{ (1-t)^{\frac{1}{q}}, t^{\frac{1}{q}} \Big\} (1 - \beta_{n+1})
	\geq \simplexscaling{T}{-z}.
\end{equation}
Note that $t^{\frac{1}{q}} \beta_{n+1} + (1-t)^{\frac{1}{q}} (1-\beta_{n+1})$ is a concave function in $t \in [0,1]$
that attains its minimum on the interval $[\frac{1}{2},1]$ at one of the endpoints.
The respective function values are $2^{-\frac{1}{q}}$ and $\beta_{n+1}$,
both of which are at least $\min\{ \beta_{n+1}, 2^{-\frac{1}{q}} \} \geq \simplexscaling{T}{-z}$.
Hence, \eqref{eq:muz_cond} is satisfied for all $t \in [\frac{1}{2},1]$.
If $t \leq \frac{1}{2}$, then \eqref{eq:muz_cond} is equivalent to $t \geq \simplexscaling{T}{-z}^q$.
In summary, $[\simplexscaling{T}{-z}^q,1] \subset \interval{p}{m}{T}{z}$, such that
\begin{equation}\label{eq:iz_muz_bound}
	|\interval{p}{m}{T}{z}|
	\geq 1 - \simplexscaling{T}{-z}^q.
\end{equation}

Now, choose $k \in [m]$ with $\simplexscaling{T}{-z} = (a^{n+1})^T (-z^k)$ and let $\hat a \in \cartesian{(\R^n)}{m}$ be the vector
whose $k$-th $n$-dimensional vector component is $-a^{n+1}$ and all others are $0 \in \R^n$.
Then
\[
	h_{\diff{p}{m}{T}}(\hat a)
	= \big( h_{\diag{m}{T}} (\hat a)^p + h_{-\cartesian{T}{m}} (\hat a)^p \big)^{\frac{1}{p}}
	= \big( h_T(-a^{n+1})^p + h_T(a^{n+1})^p \big)^{\frac{1}{p}}
	= \big( (1-\beta_{n+1})^p + \beta_{n+1}^p \big)^{\frac{1}{p}},
\]
where we used that $\beta_{n+1} = h_T(a^{n+1}) = 1 - h_T(-a^{n+1})$ by the normalization of the $a^i$.
Therefore,
\[
	\|z\|_{\diff{p}{m}{T}}
	\geq \frac{\hat a^T z}{h_{\diff{p}{m}{T}} (\hat a)}
	= \frac{(-a^{n+1})^T z^k}{h_{\diff{p}{m}{T}} (\hat a)}
	= \frac{\simplexscaling{T}{-z}}{( (1-\beta_{n+1})^p + \beta_{n+1}^p )^{\frac{1}{p}}},
\]
and with \eqref{eq:iz_muz_bound} consequently
\begin{align*}
	|\interval{p}{m}{T}{z}|
	& \geq 1 - \simplexscaling{T}{-z}^q - \|z\|_{\diff{p}{m}{T}}^q + \frac{\simplexscaling{T}{-z}^q}{( (1-\beta_{n+1})^p + \beta_{n+1}^p )^{\frac{q}{p}}}
    \\
	& = 1 - \|z\|_{\diff{p}{m}{T}}^q + \simplexscaling{T}{-z}^q \Big( ( (1-\beta_{n+1})^p + \beta_{n+1}^p )^{-\frac{q}{p}} - 1 \Big).
\end{align*}
Lastly, an easy computation shows for $\beta \in [0,\frac{1}{2}]$ that
$((1-\beta)^p + \beta^p )^{\frac{q}{p}} = ((1-\beta)^p + \beta^p )^{\frac{1}{p-1}} \leq 1-\beta$.
By symmetry, $((1-\beta)^p + \beta^p )^{\frac{q}{p}} \leq \beta$ for $\beta \in [\frac{1}{2},1]$.
The claimed inequality follows.
\end{proof}

We now have all ingredients necessary to verify Theorem~\ref{thm:Lp_RSS}.
Note that the use of Lemma~\ref{lem:simplex_iz} in the characterization of the equality case can be replaced by a simpler argument based on continuity like in \cite{2026:KotrbatyMouamine}.
Since we anyway require Lemma~\ref{lem:simplex_iz} for the proof of Theorem~\ref{thm:Lp_RSS_stability}, we may also use it here.
Before we proceed, we note from the relation of the beta and gamma functions that
\begin{equation}\label{eq:beta_gamma}
    \bigg( 1 + \frac{nm}{q} \bigg) \int_0^1 (1-t)^\frac{k}{q} t^{\frac{nm-k}{q}} \dd t
    = \frac{\Gamma(1+\frac{k}{q}) \Gamma(1+\frac{nm-k}{q})}{\Gamma(1+\frac{nm}{q})}
    = \binom{nm/q}{k/q}^{-1}.
\end{equation}

\begin{proof}[Proof of Theorem~\ref{thm:Lp_RSS}]
Using Tonelli's theorem, we have
\begin{equation}\label{eq:fiber_integral}
    \int_0^1 \vol_{nm}(\lyz{p}{m}{K}{t}) \dd t
    = \int_0^1 \int_{\cartesian{(R^n)}{m}} \bone_{\{ z \in \lyz{p}{m}{K}{t} \}} \dd z \dd t
    = \int_{\diff{p}{m}{K}} |\interval{p}{m}{K}{z}| \dd z.
\end{equation}
Our goal is to use Lemma~\ref{lem:general_iz} to obtain a lower bound to the right-hand integral.
In preparation, we compute for $p > 1$ (and $q < \infty$) with the layer cake representation for the $q$-norm that
\begin{align*}
    \int_{\diff{p}{m}{K}} \|z\|_{\diff{p}{m}{K}}^q \dd z
    & = q \int_0^1 t^{q-1} \vol_{nm}(\diff{p}{m}{K} \setminus t \diff{p}{m}{K}) \dd t
    = q \vol_{nm}(\diff{p}{m}{K}) \int_0^1 t^{q-1} (1 - t^{nm}) \dd t
    \\
    & = q \vol_{nm}(\diff{p}{m}{K}) \bigg( \frac{1}{q} - \frac{1}{nm+q} \bigg)
    = \frac{nm}{nm+q} \vol_{nm}(\diff{p}{m}{K}).
\end{align*}
Therefore, Lemma~\ref{lem:general_iz} and \eqref{eq:fiber_integral} yield
\begin{equation}\label{eq:fiber_estimate}
    \vol_{nm}(\diff{p}{m}{K})
    = \bigg( 1 + \frac{nm}{q} \bigg) \int_{\diff{p}{m}{K}} 1 - \|z\|_{\diff{p}{m}{K}}^q \dd z
    \leq \bigg( 1 + \frac{nm}{q} \bigg) \int_0^1 \vol_{nm}(\lyz{p}{m}{K}{t}) \dd t.
\end{equation}
The inequality from left to right is also true for $p = 1$ and $q = \infty$ (with equality) since $\lyz{p}{m}{K}{t} = \diff{p}{m}{K}$ for all $t \in [0,1]$ in this case.
Hence, using the multilinearity of the mixed volume, Proposition~\ref{prop:Godbersen}, \eqref{eq:asymmetry_Godbersen}, and \eqref{eq:beta_gamma},
we obtain for general $p$ and $q$ that
\begin{align*}
    \vol_{nm}(\diff{p}{m}{K})
    & \leq \bigg(1 + \frac{nm}{q} \bigg) \int_0^1 \vol_{nm}(\lyz{p}{m}{K}{t}) \dd t
    \\
    & = \bigg(1 + \frac{nm}{q} \bigg)
    \int_0^1 \sum_{k \in \{0\} \cup [nm]} \binom{nm}{k} V \Big(-t^\frac{1}{q} \diag{m} K[k], (1-t)^\frac{1}{q} \cartesian{K}{m}[nm-k] \Big) \dd t
    \\
    & = \sum_{k \in \{0\} \cup [n]} \binom{nm}{k} V(-\diag{m} K[k], \cartesian{K}{m}[nm-k])
    \bigg(1 + \frac{nm}{q} \bigg) \int_0^1 t^\frac{k}{q} (1-t)^\frac{nm-k}{q} \dd t
    \\
    & \leq nm \binom{nm/q}{1/q}^{-1}  (s(K)-n) \vol_n(K)^m + \sum_{k \in \{0\} \cup [n]} \binom{nm}{k} \binom{n}{k} \binom{nm/q}{k/q}^{-1} \vol_n(K)^m
    \\
    & = \bigg( C_{n,m,q} - nm \binom{nm/q}{1/q}^{-1} (n-s(K)) \bigg) \vol_n(K)^m
    \leq C_{n,m,q} \vol_n(K)^m.
\end{align*}

Equality is attained if and only if all three estimates above are satisfied with equality, which means equality in Proposition~\ref{prop:Godbersen}, \eqref{eq:asymmetry_Godbersen}, \eqref{eq:fiber_estimate}, and $s(K) \leq n$.
This is simultaneously the case for all three of Proposition~\ref{prop:Godbersen}, \eqref{eq:asymmetry_Godbersen}, and $s(K) \leq n$ if and only if $K$ is a simplex.
Under the assumption that $K$ is a simplex, Lemmas~\ref{lem:general_iz}~and~\ref{lem:simplex_iz} show that equality is attained in \eqref{eq:fiber_estimate} if and only if $p = 1$ or $K$ has a vertex at $0$.
The characterization of the equality case follows.
\end{proof}

Next, we work toward Theorem~\ref{thm:Lp_RSS_stability}.
The proof of Theorem~\ref{thm:Lp_RSS} already ties the tightness of the inequality there to the Minkowski asymmetry.
Near-extremal bodies must have near-maximal Minkowski asymmetry,
so that Theorem~\ref{thm:main} bounds their Banach--Mazur distance to the simplex.
Therefore, we may focus solely on simplices to control the positioning of general near-extremal bodies.
Lemma~\ref{lem:simplex_iz} already leads to a stability improvement of Theorem~\ref{thm:Lp_RSS} for simplices based on the largest of the $\beta_i$ as shown in Lemma~\ref{lem:simplex_stability} below.
The last remaining step afterward is to connect the $\beta_i$ to the desired statements about the positioning of the simplex.
Before we can proceed with these steps, we require a simple, albeit technical, estimate.

\begin{lemma}\label{lem:1/2_powers}
Let $t \in (0,\infty)$.
Then
\[
	2^{-\frac{1}{t}} + 2^{-t}
	\leq 1.
\]
\end{lemma}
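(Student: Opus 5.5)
By the symmetry $t \leftrightarrow 1/t$ of the left-hand side, I will assume $t \geq 1$ throughout. At $t = 1$ equality holds, since $2^{-1} + 2^{-1} = 1$. So the plan is to rewrite the inequality as a one-variable monotonicity statement on $[1,\infty)$ that starts from this equality. For $t \geq 1$ put $a := 2^{-t} \in (0,\frac12]$. Then the claim $2^{-1/t} \leq 1 - 2^{-t}$ becomes, after taking $\log_2$ and multiplying by $t>0$,
\[
    g(t) := -\frac{t \ln(1-2^{-t})}{\ln 2} \leq 1.
\]
Since $g(1) = -\ln(1/2)/\ln 2 = 1$, it is enough to show that $g$ is non-increasing on $[1,\infty)$.

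Next I differentiate $g$. Up to the positive factor $1/\ln 2$,
\[
    g'(t) \propto -\ln(1-a) - t\,\frac{a \ln 2}{1-a}, \qquad a = 2^{-t},
\]
using $\frac{\dd}{\dd t}\, 2^{-t} = -a\ln 2$. Note that $t \ln 2 = -\ln a$. Multiplying by $1-a > 0$, the condition $g' \leq 0$ becomes
\[
    -(1-a)\ln(1-a) \leq -a \ln a.
\]
With $\varphi(x) := -x\ln x$, this reads $\varphi(1-a) \leq \varphi(a)$ for all $a \in (0,\frac12]$.

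The main step is this entropy-type comparison, and I expect it to be the only real obstacle. I would handle it by concavity. Let $h(a) := \varphi(a) - \varphi(1-a)$. Then $h(a) \to 0$ as $a \to 0^+$ and $h(\frac12) = 0$. Moreover
\[
    h''(a) = -\frac1a + \frac1{1-a} < 0 \quad \text{on } (0,\tfrac12),
\]
so $h$ is concave there. A concave function on $(0,\frac12]$ that tends to $0$ at both ends is non-negative on the whole interval. Hence $\varphi(1-a) \leq \varphi(a)$, so $g' \leq 0$ and $g(t) \leq g(1) = 1$ for all $t \geq 1$.

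Undoing the substitutions gives $2^{-1/t} \leq 1 - 2^{-t}$ for $t \geq 1$. The symmetry then extends this to all $t \in (0,\infty)$.
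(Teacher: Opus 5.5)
Your proof is correct, but it takes a different route from the paper.

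The paper argues by contradiction directly on $f(t)=2^{-1/t}+2^{-t}$. If $f(t_0)>1$ for some $t_0>1$, then $f(1)=1=\lim_{t\to\infty}f(t)$ forces an interior local maximum $t_1>1$ with $f(t_1)>1$. The critical-point equation $2^{-t_1}=2^{-1/t_1}/t_1^2$ rewrites $f(t_1)$ as $2^{-1/t_1}(1+t_1^{-2})$. This is at most $1$ by the elementary bound $1+s^2\le 2^s$ on $[0,1]$, which follows from concavity of $2^s-1-s^2$.

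You instead pass to the equivalent logarithmic form $-t\log_2(1-2^{-t})\le 1$ and show that the left-hand side is non-increasing on $[1,\infty)$. With $a=2^{-t}$, this reduces to the entropy-type comparison $-(1-a)\ln(1-a)\le -a\ln a$ (i.e.\ $a^a\le(1-a)^{1-a}$) on $(0,\tfrac12]$. You settle that by a concavity argument for a function vanishing at both ends. Your derivative, the sign reduction after multiplying by $1-a$, and the limit argument as $a\to 0^+$ are all correct.

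The paper's version uses $f$ only at its critical points, but it needs the behavior at infinity and an extremum/contradiction step. Yours is a direct monotonicity proof and gives slightly more. Since $h''<0$ makes $h$ strictly concave, $t\mapsto -t\log_2(1-2^{-t})$ is strictly decreasing on $[1,\infty)$, so the inequality is strict for $t\neq 1$.
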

\begin{proof}
Let $f(t) = 2^{-\frac{1}{t}} + 2^{-t}$ and assume for a contradiction that there exists some $t_0 \in (0,\infty)$ with $f(t_0) > 1$.
Since $f(t) = f(\frac{1}{t})$ for all $t \in (0,\infty)$, we may suppose that $t_0 \in (1,\infty)$.
Then $1 = f(1) = \lim_{t \to \infty} f(t)$ shows that $f$ has a local maximum $t_1 \in (1,\infty)$ with $f(t_1) > 1$.
Using
\[
	0
	= f'(t_1)
	= \ln(2) \bigg( \frac{2^{-\frac{1}{t_1}}}{t_1^2} - 2^{-t_1} \bigg),
\]
we obtain
\[
	1
	< f(t_1)
	= 2^{-\frac{1}{t_1}} + \frac{2^{-\frac{1}{t_1}}}{t_1^2}
	= \frac{1+\frac{1}{t_1^2}}{2^{\frac{1}{t_1}}}.
\]
However, we claim that $1 + t^2 \leq 2^t$ for all $t \in (0,1)$,
in which case the particular choice $t = \frac{1}{t_1}$ leads to the desired contradiction.
Now, let $g(t) = 2^t - 1 - t^2$.
Then $g(0) = g(1) = 0$ and
\[
	g''(t)
	= \ln(2)^2 2^t - 2
	< 2 (\ln(2)^2 - 1)
	< 0
\]
for all $t \in (0,1)$.
Therefore, $g$ is concave on $[0,1]$, so that $g(t) \geq 0$ for all $t \in [0,1]$ as claimed.
\end{proof}

\begin{lemma}\label{lem:simplex_stability}
Let $T \in \CK^n$ be a simplex with $0 \in T$ and $\beta_{n+1} \geq \frac{1}{n+1}$, and suppose that $p > 1$ (and $q < \infty$).
Then
\[
	\vol_{nm}(\diff{p}{m}{T})
	\leq \begin{cases}
		\displaystyle \Bigg( C_{n,m,q} - \frac{nm}{2^{1+\frac{nm}{q}} q} (1-\beta_{n+1}) \Bigg) \vol_n(T)^m,
			& \text{if } \beta_{n+1} \geq 2^{-\frac{1}{q}},
		\vspace{10pt}
		\\
		\displaystyle \Bigg( C_{n,m,q} - \frac{m}{(n+1)^{nm+q} q} \Bigg) \vol_n(T)^m,
			& \text{else}.
	\end{cases}
\]
\end{lemma}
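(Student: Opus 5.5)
The plan is to rerun the proof of Theorem~\ref{thm:Lp_RSS} for $K=T$ and keep the error term that Lemma~\ref{lem:simplex_iz} supplies. For a simplex, equality holds in Proposition~\ref{prop:Godbersen}, in \eqref{eq:asymmetry_Godbersen} and in $s(T)\le n$. So the chain of estimates there gives $(1+\frac{nm}{q})\int_0^1\vol_{nm}(\lyz{p}{m}{T}{t})\dd t=C_{n,m,q}\vol_n(T)^m$. Combining this with \eqref{eq:fiber_integral} and the layer-cake identity used for \eqref{eq:fiber_estimate}, we get
\[
C_{n,m,q}\vol_n(T)^m-\vol_{nm}(\diff{p}{m}{T})=\Big(1+\frac{nm}{q}\Big)\int_{\diff{p}{m}{T}}\Big(|\interval{p}{m}{T}{z}|-1+\|z\|_{\diff{p}{m}{T}}^q\Big)\dd z .
\]
By Lemma~\ref{lem:general_iz} the integrand is nonnegative everywhere. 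Restrict the integral to $Z:=-\cartesian{\simplexscale{T}{\mu_0}}{m}$ with $\mu_0:=\min\{\beta_{n+1},2^{-1/q}\}$. There Lemma~\ref{lem:simplex_iz} bounds the integrand below by $\simplexscaling{T}{-z}^q\,\kappa$, where $\kappa:=\min\{\frac{1-\beta_{n+1}}{\beta_{n+1}},\frac{\beta_{n+1}}{1-\beta_{n+1}}\}$.

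Next I compute $\int_Z\simplexscaling{T}{-z}^q\dd z$ exactly. Since $T(1)=T-v^{n+1}$ is a cone with apex $0$ on which $(a^{n+1})^Tx$ ranges over $[0,1]$ (because $h_T(a^{n+1})-(a^{n+1})^Tv^{n+1}=1$), the quantity $u=(a^{n+1})^Tx/\mu_0$ for $x$ uniform in $\simplexscale{T}{\mu_0}$ has density $nu^{n-1}$ on $[0,1]$. Hence the maximum of $m$ independent copies has distribution function $u^{nm}$, and $\mathbb E[\max^q]=\frac{nm}{nm+q}$. Since $\vol(Z)=\mu_0^{nm}\vol_n(T)^m$, this gives
\[
C_{n,m,q}\vol_n(T)^m-\vol_{nm}(\diff{p}{m}{T})\ \ge\ \frac{nm}{q}\,\mu_0^{nm+q}\,\kappa\,\vol_n(T)^m .
\]
Using the maximum here, rather than a single coordinate, is essential: it is what turns the prefactor $(1+\frac{nm}{q})$ into exactly $\frac{nm}{q}$.

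It remains to do the case analysis on $\beta:=\beta_{n+1}$.
\begin{itemize}
\item If $\beta\ge 2^{-1/q}$, then $\mu_0^{nm+q}=2^{-1-nm/q}$ and $\kappa\ge 1-\beta$ (since $\beta\ge\frac12$). This gives exactly the first bound.
\item If $\frac{1}{n+1}\le\beta\le\frac12$, then $\mu_0=\beta\ge\frac1{n+1}$ and $\kappa=\frac{\beta}{1-\beta}\ge\frac1n$. This yields $\frac{nm}{q}(n+1)^{-nm-q}\frac1n$, which is exactly the second bound.
\item If $\frac12<\beta<2^{-1/q}$, then $\mu_0=\beta$ and the bound is $\frac{nm}{q}\beta^{nm+q-1}(1-\beta)$.
\end{itemize}

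I expect the main obstacle to be this last subcase, since the naive estimates lose an exponential factor in $q$. My plan there is to use $\beta^{nm}\ge 2^{-nm}$ and to minimize $\beta^{q-1}(1-\beta)$ over the interval $[\frac12,2^{-1/q}]$. At $\beta=\frac12$ it equals $2^{-q}$. At the right endpoint it equals $2^{1/q}2^{-1}(1-2^{-1/q})\ge 2^{-1}\cdot2^{-q}$ by Lemma~\ref{lem:1/2_powers}. By unimodality of this function (the critical point is a maximum), the minimum is at least $2^{-q-1}$. It then suffices to check $n\,2^{-nm-q-1}\ge(n+1)^{-nm-q}$, which follows from $(n+1)^{nm+q}\ge 2^{nm+q}$ together with $2n\ge 2$ after absorbing the factor $2^{-1}$.
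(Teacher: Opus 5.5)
Your argument follows the paper's through its first two subcases, and those are correct. This covers the identity from the equality cases in the proof of Theorem~\ref{thm:Lp_RSS}, the restriction to $-\cartesian{\simplexscale{T}{\mu_0}}{m}$ via Lemma~\ref{lem:simplex_iz}, and the value $\frac{nm}{nm+q}\mu_0^{nm+q}\vol_n(T)^m$; your order-statistics computation is equivalent to the paper's layer-cake one. The gap is in the subcase $\frac12<\beta<2^{-1/q}$. Your final inequality $n\,2^{-nm-q-1}\ge(n+1)^{-nm-q}$ is equivalent to $n(n+1)^{nm+q}\ge 2^{nm+q+1}$, and from $(n+1)^{nm+q}\ge 2^{nm+q}$ this needs $n\ge 2$, not $2n\ge 2$. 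For $n=1$ it reads $2^{-m-q-1}\ge 2^{-m-q}$, which is false. The lemma does not exclude $n=1$, and for $n=1$ this subcase is nonempty as soon as $q>1$. The problem is not merely a loose last step. Bounding $\beta^{nm}$ by its value at the left endpoint, while $\beta^{q-1}(1-\beta)$ may attain its minimum at the right endpoint, loses too much. For $n=1$ and $q=2$, the minimum of $\beta(1-\beta)$ on $[\frac12,2^{-1/2}]$ is $2^{-1/2}(1-2^{-1/2})\approx 0.207<\frac14$. So no estimate of the form $2^{-m}\min\beta^{q-1}(1-\beta)$ can reach the required $2^{-m-2}$.

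The fix is to keep the factors coupled, as the paper does. The function $h(\beta)=\beta^{nm+q-1}(1-\beta)$ increases on $[0,1-\frac{1}{nm+q}]$ and decreases afterwards, so on $[\frac12,2^{-1/q}]$ it is minimized at an endpoint. One has $h(\frac12)=2^{-nm-q}$ and, by Lemma~\ref{lem:1/2_powers},
\[
h(2^{-1/q})=2^{-\frac{nm+q-1}{q}}\big(1-2^{-1/q}\big)\ge 2^{-\frac{nm+q-1}{q}-q}\ge 2^{-nm-q},
\]
where the last step is $(nm-1)(q-1)\ge 0$. Hence $\frac{nm}{q}h(\beta)\ge\frac{nm}{q}2^{-nm-q}\ge\frac{m}{q(n+1)^{nm+q}}$ for every $n\ge 1$. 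The paper actually runs this endpoint comparison once on all of $[\frac{1}{n+1},2^{-1/q}]$ for $\beta^{nm+q}\min\{\frac{1-\beta}{\beta},\frac{\beta}{1-\beta}\}$, which makes your separate middle subcase unnecessary. Alternatively, you can keep your decoupled estimate for $n\ge 2$, where it does work, and treat $n=1$ with the coupled bound.
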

\begin{proof}
From Lemmas~\ref{lem:general_iz}~and~\ref{lem:simplex_iz}, we have for $\mu := \min \{ \beta_{n+1}, 2^{-\frac{1}{q}} \}$ that
\begin{equation}\label{eq:simplex_fiber_estimate}
	\int_{\diff{p}{m}{T}} |\interval{p}{m}{T}{z}| \dd z
	\geq \int_{\diff{p}{m}{T}} 1 - \|z\|_{\diff{p}{m}{T}}^q \dd z
		+ \min \bigg\{ \frac{1-\beta_{n+1}}{\beta_{n+1}}, \frac{\beta_{n+1}}{1-\beta_{n+1}} \bigg\} \int_{\cartesian{\simplexscale{T}{\mu}}{m}} \simplexscaling{T}{z}^q \dd z.
\end{equation}
Again using the layer cake representation for the $q$-norm gives, similar to before,
\begin{align*}
	\int_{\cartesian{\simplexscale{T}{\mu}}{m}} \simplexscaling{T}{z}^q \dd z
	& = q \int_0^\mu t^{q-1} \vol_{nm}(\cartesian{\simplexscale{T}{\mu}}{m} \setminus \cartesian{\simplexscale{T}{t}}{m}) \dd t
	= q \vol_{nm}(\cartesian{T}{m}) \int_0^\mu t^{q-1} (\mu^{nm} - t^{nm}) \dd t
	\\
	& = q \vol_n(T)^m \bigg( \frac{\mu^{nm+q}}{q} - \frac{\mu^{nm+q}}{nm+q} \bigg)
    = \frac{nm}{nm+q} \mu^{nm+q} \vol_n(T)^m.
\end{align*}
Together with \eqref{eq:fiber_integral}--\eqref{eq:simplex_fiber_estimate} and the discussion for the equality case in Theorem~\ref{thm:Lp_RSS}, this results in
\[
	\vol_{nm}(\diff{p}{m}{T}) + \frac{nm}{q} \mu^{nm+q} \vol_n(T)^m \min \bigg\{ \frac{1-\beta_{n+1}}{\beta_{n+1}}, \frac{\beta_{n+1}}{1-\beta_{n+1}} \bigg\}
	\leq C_{n,m,q} \vol_n(T)^m.
\]

The last step to complete the proof is a case distinction for how large $\beta_{n+1}$ is.
If $\beta_{n+1} \geq 2^{-\frac{1}{q}}$, then in particular $\beta_{n+1} \geq \frac{1}{2}$.
Hence, $\mu = 2^{-\frac{1}{q}}$, and $\beta_{n+1} \leq 1$ shows
\[
    \min \bigg\{ \frac{1-\beta_{n+1}}{\beta_{n+1}}, \frac{\beta_{n+1}}{1-\beta_{n+1}} \bigg\}
    = \frac{1-\beta_{n+1}}{\beta_{n+1}}
    \geq 1-\beta_{n+1},
\]
from which the claimed inequality in this case follows.
If instead $\beta_{n+1} \leq 2^{-\frac{1}{q}}$,
then $\mu = \beta_{n+1}$.
It is straightforward to check that $h(\beta) := \beta^{nm+q} \min \{ \frac{1-\beta}{\beta}, \frac{\beta}{1-\beta} \}$
is increasing in $\beta \in [\frac{1}{n+1},1-\frac{1}{nm+q}]$ and decreasing in $\beta \in [1-\frac{1}{nm+q},1]$.
In particular, $h$ attains it minimal value on the interval $[\frac{1}{n+1},2^{-\frac{1}{q}}]$ at one of the endpoints.
With Lemma~\ref{lem:1/2_powers}, we see that
\[
	h(2^{-\frac{1}{q}})
	= 2^{-\frac{nm+q-1}{q}} (1-2^{-\frac{1}{q}})
	\geq 2^{-\frac{nm+q-1}{q} - q}
	\geq 2^{-nm-q}
	= h \Big( \frac{1}{2} \Big)
	\geq h \Big( \frac{1}{n+1} \Big),
\]
so $h$ attains its minimum on $[\frac{1}{n+1},2^{-\frac{1}{q}}]$ at $\frac{1}{n+1}$.
This yields the inequality in the second case.
\end{proof}

The following two lemmas give the aforementioned control over the positioning of $T$ in terms of the largest of the $\beta_i$ when it is at least $\frac{1}{2}$.

\begin{lemma}\label{lem:simplex_symmetrizations}
Let $T \in \CK^n$ be a simplex with $0 \in T$ and $\beta_{n+1} \geq \frac{1}{2}$.
Then
\[
	\max_{z \in T \cap (-T)} \|z\|_{T-T}
	= 1 - \beta_{n+1}.
\]
\end{lemma}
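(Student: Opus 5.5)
The plan is to reduce everything to linear coordinates adapted to the normalization of the facet normals $a^1,\dots,a^{n+1}$ introduced at the beginning of this section. For $z \in \R^n$ I set $x_i := (a^i)^T z$. By \eqref{eq:simplex_normalization} these satisfy $\sum_{i \in [n+1]} x_i = 0$. Conversely, any real numbers $x_1, \dots, x_{n+1}$ summing to $0$ arise from a unique $z$, because $a^1,\dots,a^n$ form a basis. Since $T = \bigcap_i H_{(a^i,\beta_i)}^\leq$, the condition $z \in T \cap (-T)$ becomes $|x_i| \leq \beta_i$ for all $i$. Note that $\beta_i \geq 0$ because $0 \in T$.

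\textbf{Computing the gauge of $T-T$.} The key step is a formula for $\|z\|_{T-T}$ in these coordinates. I claim
\[
	\|z\|_{T-T} = P(z) := \sum_{i \in [n+1]} \max\{x_i,0\} = \frac{1}{2}\sum_{i\in[n+1]} |x_i|.
\]

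For the lower bound $\|z\|_{T-T} \geq P(z)$, I use the functionals $\sigma_I := \sum_{i \in I} a^i$ for subsets $I \subset [n+1]$. From $(a^i)^T v^j = \beta_i - \delta_{ij}$ one gets $(\sigma_I)^T v^j = \beta_I - [j \in I]$, where $\beta_I := \sum_{i\in I}\beta_i$. For nonempty proper $I$ this gives $h_T(\sigma_I) = \beta_I$ and $h_T(-\sigma_I) = 1 - \beta_I$, so $h_{T-T}(\sigma_I) = 1$. Hence $\|z\|_{T-T} \geq \sigma_I^T z = \sum_{i\in I} x_i$. Choosing $I = \{i : x_i > 0\}$ yields the bound; the case $z=0$ is trivial.

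For the upper bound, I show directly that $z \in \rho(T-T)$ for every $\rho \geq P(z)$ with $\rho > 0$. I look for $y, y' \in T$ with $(a^i)^T y = \beta_i - c_i$ and $(a^i)^T y' = \beta_i - c'_i$. The constraints are $c_i, c'_i \geq 0$ (membership in $T$), $\sum_i c_i = \sum_i c'_i = 1$ (compatibility with $\sum a^i = 0$ and $\sum \beta_i = 1$), and $c'_i - c_i = x_i/\rho$, so that $z = \rho(y - y')$. I take
\[
	c_i = \frac{\max\{-x_i,0\}}{\rho} + \theta_i, \qquad c'_i = \frac{\max\{x_i,0\}}{\rho} + \theta_i,
\]
with any $\theta_i \geq 0$ summing to $1 - P(z)/\rho \geq 0$. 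All constraints then hold, since the positive and negative parts of the $x_i$ both sum to $P(z)$. This step is where I expect the only real work, namely verifying the description of $T-T$; the explicit construction above seems to settle it cleanly.

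\textbf{Upper bound $\max \leq 1-\beta_{n+1}$.} Let $z \in T\cap(-T)$. If $x_{n+1} \geq 0$, all negative $x_i$ have $i \leq n$, so
\[
	P(z) = \sum_{x_i<0} (-x_i) \leq \sum_{i\in[n]} \beta_i = 1-\beta_{n+1}.
\]
If $x_{n+1} < 0$, all positive $x_i$ have $i \leq n$, and the same bound follows. This case split gives $\|z\|_{T-T} \leq 1-\beta_{n+1}$.

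\textbf{Attainment.} Choose $x_i = \beta_i$ for $i \in [n]$ and $x_{n+1} = -(1-\beta_{n+1})$. These sum to $0$, so they come from some $z$. They satisfy $|x_i| \leq \beta_i$ because $1-\beta_{n+1} \leq \beta_{n+1}$; this is exactly where the hypothesis $\beta_{n+1}\geq \frac12$ is used. So $z \in T \cap (-T)$, and $P(z) = 1-\beta_{n+1}$, which proves the equality.
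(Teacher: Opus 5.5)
Your proof is correct and is essentially the paper's argument. You write $\|z\|_{T-T}$ as $\sum_i \max\{0,-(a^i)^T z\}$ (your $P(z)$, since $\sum_i (a^i)^T z = 0$), bound it by $\sum_{i\in[n]}\beta_i = 1-\beta_{n+1}$ on $T\cap(-T)$ after fixing the sign of $(a^{n+1})^T z$, and attain the bound at the point with $(a^i)^T z = \pm\beta_i$ for $i\in[n]$, which is exactly where $\beta_{n+1}\geq\frac{1}{2}$ enters. The one difference is that you derive the gauge formula from scratch (the functionals $\sigma_I$ give the lower bound, an explicit decomposition $z=\rho(y-y')$ gives the upper bound), whereas the paper reads it off from Lemma~\ref{lem:simplex_lyz} with $m=p=1$ and $t=\frac{1}{2}$, where the relevant set is exactly $T-T$ and the proof contains the same construction.
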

\begin{proof}
If $m = p = 1$ (and $q = \infty$), we have $\lyz{p}{m}{T}{\frac{1}{2}} = T-T$.
Therefore, Lemma~\ref{lem:simplex_lyz} together with $\sum_{i \in [n+1]} \beta_i = 1$ shows for $z \in \R^n$ and $\tau \geq 0$ that $\tau z \in T-T$ if and only if
\[
    0
    \leq \sum_{i \in [n+1]} \min \big\{ \beta_i, \beta_i + (a^i)^T (\tau z) \big\}
    = 1 + \tau \sum_{i \in [n+1]} \min \big\{ 0, (a^i)^T z \big\}.
\]
If $z$ is non-zero, then the largest $\tau \geq 0$ satisfying these conditions is $\|z\|_{T-T}^{-1}$ by the former, and $(- \sum_{i \in [n+1]} \min\{ 0, (a^i)^T z \})^{-1}$ by the latter.
Consequently, for all $z \in \R^n$,
\[
    \|z\|_{T-T}
    = \sum_{i \in [n+1]} \max \big\{ 0, -(a^i)^T z \big\}.
\]

If we now want to estimate $\|z\|_{T-T}$ any $z \in (T \cap (-T))$,
we may assume that $(a^{n+1})^T z \geq 0$.
Since $z \in T \cap (-T)$ shows $|(a^i)^T z| \leq \beta_i$ for all $i \in [n+1]$, combining this with $\sum_{i \in [n+1]} \beta_i = 1$ gives
\begin{equation}\label{eq:T-T_norm_estimate}
    \|z\|_{T-T}
    = \sum_{i \in [n]} \max \big\{ 0, -(a^i)^T z \big\}
    \leq \sum_{i \in [n]} \beta_i
    = 1 - \beta_{n+1}.
\end{equation}
To see that equality can be achieved, we choose $z \in \R^n$ with $(a^i)^T z = -\beta_i$ for all $i \in [n]$.
Then
\[
    (a^{n+1})^T z
    = - \sum_{i \in [n]} (a^i)^T z
    = \sum_{i \in [n]} \beta_i
    = 1 - \beta_{n+1}
    \leq \beta_{n+1}
\]
from $\sum_{i \in [n+1]} a^i = 0$ and $\beta_{n+1} \geq \frac{1}{2}$ shows $z \in T \cap (-T)$.
This point indeed satisfies \eqref{eq:T-T_norm_estimate} with equality, so the proof is complete.
\end{proof}

\begin{lemma}\label{lem:simplex_reflection}
Let $T \in \CK^n$ be a simplex with $0 \in T$ and $\beta_{n+1} \geq \frac{1}{2}$.
Then
\[
    0
    \in \bd \big( c_T + ( n - (1-\beta_{n+1}) (n+1)) (c_T - T) \big).
\]
\end{lemma}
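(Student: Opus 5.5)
The plan is to describe the homothet $c_T + \lambda (c_T - T)$ with $\lambda := n - (1-\beta_{n+1})(n+1) = (n+1)\beta_{n+1} - 1$ by facet inequalities. Then we check directly that $0$ satisfies all of them, with equality in at least one.

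First I would record the position of $c_T$ relative to the normals. By \eqref{eq:simplex_normalization_centered} we have
\[
    (a^i)^T c_T = \beta_i - \frac{1}{n+1}
\]
for all $i \in [n+1]$. Since $\beta_{n+1} \geq \frac12$, we get $\lambda \geq \frac{n-1}{2} \geq 0$. Assume first $\lambda > 0$. A point $x$ lies in $c_T + \lambda(c_T - T)$ if and only if $y := c_T - \frac{1}{\lambda}(x - c_T) \in T$, that is, if and only if $(a^i)^T y \leq \beta_i$ for all $i \in [n+1]$.

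Next I would plug in $x = 0$, so that $y = (1 + \frac{1}{\lambda}) c_T$. The $i$-th condition then reads
\[
    \Big(1 + \frac{1}{\lambda}\Big)\Big(\beta_i - \frac{1}{n+1}\Big) \leq \beta_i .
\]
After multiplying by $\lambda(n+1) > 0$ and simplifying, this becomes $(n+1)\beta_i \leq \lambda + 1 = (n+1)\beta_{n+1}$, i.e.\ $\beta_i \leq \beta_{n+1}$.

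This holds for all $i$. For $i = n+1$ it is trivial. For $i \leq n$, the normalization $\sum_{i} \beta_i = 1$ from \eqref{eq:simplex_normalization} and $\beta_i \geq 0$ give $\beta_i \leq 1 - \beta_{n+1} \leq \beta_{n+1}$, using $\beta_{n+1} \geq \frac12$. Hence $0$ lies in the set. The inequality for $i = n+1$ holds with equality, so $y$ lies on the facet $\{a^{n+1}\text{-face}\}$ of $T$. Therefore $0$ lies on the boundary of the homothet, since the map $y \mapsto x$ is an affine bijection for $\lambda > 0$.

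The only degenerate case is $\lambda = 0$. This forces $n = 1$ and $\beta_{n+1} = \beta_1 = \frac12$. Then $(a^i)^T c_T = 0$ for both $i$, so $c_T = 0$. The set is the singleton $\{c_T\} = \{0\}$, whose boundary is itself.

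I do not expect a real obstacle here. The main care needed is keeping the normalization of the $a^i$ (so that $h_{T-c_T}(a^i) = \frac{1}{n+1}$) straight, and the small degenerate case $\lambda = 0$.
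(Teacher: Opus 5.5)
Your proof is correct and takes essentially the same route as the paper. Both arguments turn membership of $0$ in the homothet into the facet conditions $(n+1)\beta_i \le \rho+1$, using $(a^i)^T c_T = \beta_i - \frac{1}{n+1}$, and then conclude with $\beta_i \le 1-\beta_{n+1} \le \beta_{n+1}$. The only difference is presentational: the paper finds the threshold value of $\rho$ and reads off boundary membership from it, whereas you fix $\rho=\lambda$ and obtain the boundary from equality in the $(n+1)$-st inequality, treating the degenerate case $\lambda=0$ separately.
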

\begin{proof}
For $\rho \geq 0$ we have $0 \in c_T + \rho (c_T - T)$ if and only if $h_{c_T + \rho (c_T - T)}(-a^i) \geq 0$ for all $i \in [n+1]$.
Using \eqref{eq:simplex_normalization_centered}, this is equivalent to
\[
    0
    \leq \rho h_T(a^i) - (\rho + 1) (a^i)^T c_T
    = \rho \beta_i - (\rho + 1) \bigg( \beta_i - \frac{1}{n+1} \bigg)
    = \frac{\rho+1}{n+1} - \beta_i.
\]
Since $\sum_{i \in [n+1]} \beta_i = 1$ together with $\beta_i \geq 0$ for $i \in [n]$ by $0 \in T$ and $\beta_{n+1} \geq \frac{1}{2}$ by assumption implies $\beta_{n+1} \geq \beta_i$
for all $i \in [n]$,
the condition $0 \in c_T + \rho (c_T - T)$ is equivalent to just
\[
    \rho
    \geq \beta_{n+1} (n+1) - 1
    = n - (1 - \beta_{n+1}) (n+1).
\]
The claim follows.
\end{proof}

We are finally ready to prove Theorem~\ref{thm:Lp_RSS_stability}.

\begin{proof}[Proof of Theorem~\ref{thm:Lp_RSS_stability}]
The assumption on $K$ and the proof of Theorem~\ref{thm:Lp_RSS} show
\[
    C_{n,m,q} (1-\varepsilon)
    \leq \frac{\vol_{nm}(\diff{p}{m}{K})}{\vol_n(K)^m}
    \leq C_{n,m,q} - nm \binom{nm/q}{1/q}^{-1} (n-s(K))
\]
and therefore
\[
    n - s(K)
    \leq \gamma_{n,m,q} \cdot \varepsilon
    < 1.
\]
Applying Theorem~\ref{thm:main} now yields the first claim.

For the rest of the proof, we may
assume $p > 1$, $q < \infty$, and $\varepsilon \in (0,c_{n,m,q}]$.
By the already proven first claim, there exist a simplex $T \in \CK^n$ and a vector $u \in \R^n$ with
\begin{equation}\label{eq:Lp_simplex_containments}
    u + \frac{1}{d_{n,m,q}(\varepsilon)} T
    \subset K
    \subset T.
\end{equation}
In particular, $0 \in K \subset T$,
and the containment from left to right requires $u \in (1 - \frac{1}{d_{n,m,q}(\varepsilon)}) T$.
Now,
\[
    \diff{p}{m}{K} \subset \diff{p}{m}{T}
        \quad \text{and} \quad
    \vol_n(T) \leq d_{n,m,q}(\varepsilon)^n \vol_n(K)
\]
show
\[
    \vol_{nm}(\diff{p}{m}{T})
    \geq \vol_{nm}(\diff{p}{m}{K})
    \geq C_{n,m,q} (1-\varepsilon) \vol_n(K)^m
    \geq C_{n,m,q} \frac{1-\varepsilon}{d_{n,m,q}(\varepsilon)^{nm}} \vol_n(T)^m.
\]
The simple inequalities $\frac{1}{(1+x)^{nm}} \geq 1 - nmx$ for $x \in (0,\infty)$,
as well as $C_{n,m,q} \geq \gamma_{n,m,q}$ and $C_{n,m,q} \geq 2$ from $\binom{nm/q}{k/q} \leq \binom{nm}{k}$ for all $k \in \{0\} \cup [n]$, together with the stronger assumption on $\varepsilon$,
further give
\begin{align*}
    \frac{1-\varepsilon}{d_{n,m,q}(\varepsilon)^{nm}}
    & \geq (1-\varepsilon) \bigg( 1 - nm C_{n,m,q} \cdot \varepsilon - \frac{nm C_{n,m,q}^2 \cdot \varepsilon^2}{2 (1 - C_{n,m,q} \cdot \varepsilon)} \bigg)
    \\
    & > 1 - \bigg( 1 + nm C_{n,m,q} + \frac{nm C_{n,m,q}^2 \cdot \varepsilon}{2 (1 - C_{n,m,q} \cdot \varepsilon)} \bigg) \varepsilon
    \\
    & \geq 1 - (1 + nm C_{n,m,q} + 1) \varepsilon
    \geq 1 - (n+1) m C_{n,m,q} \cdot \varepsilon.
\end{align*}
If we re-index the vertices of $T$ such that $\beta_{n+1} = \max_{i \in [n+1]} \beta_i$, then $\sum_{i \in [n+1]} \beta_i = 1$ shows $\beta_{n+1} \geq \frac{1}{n+1}$.
Assuming for a contradiction that $\beta_{n+1} < 2^{-\frac{1}{q}}$,
Lemma~\ref{lem:simplex_stability} would then imply
\[
    C_{n,m,q} (1 - (n+1) m C_{n,m,q} \cdot \varepsilon)
    < \frac{\vol_{nm}(\diff{p}{m}{T})}{\vol_n(T)^m}
    \leq C_{n,m,q} - \frac{m}{(n+1)^{nm+q} q}
\]
and therefore
\[
    \frac{m}{(n+1)^{nm+q}q}
    < (n+1) m C_{n,m,q}^2 \cdot \varepsilon.
\]
However, this contradicts the stronger assumption on $\varepsilon$,
so $\beta_{n+1} \geq 2^{-\frac{1}{q}}$.
Lemma~\ref{lem:simplex_stability} now gives
\[
    C_{n,m,q} (1 - (n+1) m C_{n,m,q} \cdot \varepsilon)
    < \frac{\vol_{nm}(\diff{p}{m}{T})}{\vol_n(T)^m}
    \leq C_{n,m,q} - \frac{nm}{2^{1+\frac{nm}{q}} q} (1-\beta_{n+1})
\]
and in particular
\[
    1-\beta_{n+1}
    < \xi_{n,m,q} \cdot \varepsilon.
\]
We conclude from \eqref{eq:Lp_simplex_containments}, Lemma~\ref{lem:simplex_symmetrizations}, and the assumption on $\varepsilon$ that
\[
    \max_{z \in K \cap (-K)} \|z\|_{K-K}
    \leq d_{n,m,q}(\varepsilon) \max_{z \in T \cap (-T)} \|z\|_{T-T}
    < 2 \xi_{n,m,q} \cdot \varepsilon.
\]
Lastly, Lemma~\ref{lem:simplex_reflection} shows for $\rho \in [0, n - (1 - \beta_{n+1})(n+1))$ that $0 \notin \inte(c_T + \rho (c_T - T))$.
With
\begin{align*}
	(1 - \beta_{n+1}) (n+1)
    < (n+1) \xi_{n,m,q} \cdot \varepsilon
    \leq n,
\end{align*}
all claims up to the optimality of the linear orders in $\varepsilon \leq c_{n,m,q}$ are verified.

For the optimality of the first estimate, let $L \in \CK^2$ be a trapezoid that has one vertex of its longer parallel side at $0$.
While the current body $L$ is a subset of $\R^k$ for some $k < n$, we replace it by a cone of 
the form $\conv( (L \times \{0\}) \cup \{x\})$ with $x \in \R^{k+1} \setminus \aff(L \times \{0\})$.
Let $K \in \CK^n$ be the convex body obtained at the end of this process.
Then Propositions~\ref{prop:trapezoid}~and~\ref{prop:cone_distance} together show that there exists a simplex $T$ also with a vertex at $0$ such that $T \subset K \subset d_{BM}(K,\simplex{n}) T$.
Consequently, by Theorem~\ref{thm:Lp_RSS},
\[
    \vol_{nm}(\diff{p}{m}{K})
    \geq \vol_{nm}(\diff{p}{m}{T})
    = C_{n,m,q} \vol_n(T)^m
    \geq \frac{C_{n,m,q}}{d_{BM}(K,\simplex{n})^{nm}} \vol_n(K)^m.
\]
If we choose $\varepsilon \geq 0$ such that $\vol_{nm}(\diff{p}{m}{K}) = C_{n,m,q} (1-\varepsilon) \vol_n(K)^m$, this implies
\[
    d_{BM}(K,\simplex{n})
    \geq \frac{1}{(1-\varepsilon)^{\frac{1}{nm}}}
    \geq 1 + \frac{\varepsilon}{nm}.
\]

For the other two estimates, let $K = T$ be a simplex with $\beta_{n+1} \geq \frac{1}{2}$.
Then $0 \in \simplexscale{T}{\beta_{n+1}} \subset T$ by the proof of Lemma~\ref{lem:simplex_iz}, so Theorem~\ref{thm:Lp_RSS} gives
\[
    \vol_{nm}(\diff{p}{m}{T})
    \geq \vol_{nm}(\diff{p}{m}{\simplexscale{T}{\beta_{n+1}}})
    = C_{n,m,q} \vol_n(\simplexscale{T}{\beta_{n+1}})^m
    = C_{n,m,q} \beta_{n+1}^{nm} \vol_n(T)^m.
\]
In particular, if we choose $\varepsilon := nm (1-\beta_{n+1})$, then
\[
    \vol_{nm}(\diff{p}{m}{T})
    \geq C_{n,m,q} \bigg( 1 - \frac{\varepsilon}{nm} \bigg)^{nm} \vol_n(T)^m
    \geq C_{n,m,q} (1-\varepsilon) \vol_n(T)^m.
\]
Lemma~\ref{lem:simplex_symmetrizations} shows that there exists some $v \in T \cap (-T)$ with $\|v\|_{T-T} = 1 - \beta_{n+1} = \frac{\varepsilon}{nm}$, and Lemma~\ref{lem:simplex_reflection} shows that $0 \in \inte( c_T + \rho (c_T - T))$ for all $\rho > n - (1-\beta_{n+1})(n+1) = n - \frac{n+1}{nm} \varepsilon$.
Hence, the linear order in the corresponding estimates in the theorem is optimal, completing the proof.
\end{proof}

\section{Diameter of the Banach\texorpdfstring{--}{-}Mazur Compactum}\label{sec:DiameterBMcompactum}

We end this paper with our bound on the maximal Banach--Mazur distance between convex bodies in $\R^n$.
Note that an exact computation of the best upper bound obtainable from the following proof is theoretically possible, as it comes down to finding zeros of a fourth degree polynomial.
However, the resulting expression would be rather unwieldy, so that it is more practical to give an estimate for the upper bound instead.

\begin{proof}[Proof of Theorem~\ref{thm:BMdistance}]
Let us assume without loss of generality that $s(L) \geq s(K)$ and write $\varepsilon := n - s(K)$.
A direct application of \cite[Corollary~$5.10$]{2004:GoLiMePa} gives
\begin{equation} \label{eq:max_dist_glmp_eps}
    d_{BM}(K,L)
    \leq n s(K)
    = n (n - \varepsilon),
\end{equation}
so this distance is at most $n^2 - n$ in case $\varepsilon \geq 1$.
From now on, we may suppose that $\varepsilon \in [0,1)$.
Since $s(L) \geq s(K) = n - \varepsilon$,
Theorem~\ref{thm:main} shows that
\begin{equation} \label{eq:max_dist_stab_eps}
    d_{BM}(K,L)
    \leq d_{BM}(K,\simplex{n}) \cdot d_{BM}(L,\simplex{n})
    \leq \bigg(1 + \varepsilon + \frac{\varepsilon^2}{2(1-\varepsilon)} \bigg)^2.
\end{equation}
Note that $(1+\varepsilon+\frac{\varepsilon^2}{2(1-\varepsilon)})^2$ is an increasing function in $\varepsilon$ that takes the value $1$ for $\varepsilon = 0$ and tends to $\infty$ for $1 > \varepsilon \to 1$.
In contrast, $n (n - \varepsilon)$ is decreasing in $\varepsilon$ with values $n^2$ and $n (n-1)$ for $\varepsilon = 0$ and $\varepsilon = 1$, respectively.
Thus, there exists a unique $\varepsilon(n) \in [0,1)$ where the two functions coincide.
Taking \eqref{eq:max_dist_glmp_eps} and \eqref{eq:max_dist_stab_eps} together now gives
\begin{equation}\label{eq:minDist}
    d_{BM}(K,L)
    \leq \max_{\varepsilon \in [0,1)} \; \min \Bigg\{ n(n-\varepsilon), \bigg(1+\varepsilon+\frac{\varepsilon^2}{2(1-\varepsilon)}\bigg)^2 \Bigg\}
    = n (n-\varepsilon(n)).
\end{equation}
Moreover, notice that $1 - \frac{1}{2n} - \frac{11}{10n^2} \leq \varepsilon(n) \leq 1 - \frac{1}{2n}$.
Indeed, this can be checked by a numerical computation for $n = 1$.
When $n \geq 2$, we first note that $n (n-\varepsilon)$ is still larger than $(1+\varepsilon+\frac{\varepsilon^2}{2(1-\varepsilon)})^2$ for $\varepsilon = 1 - \frac{1}{2n} - \frac{11}{10n^2}$:
\begin{align*}
    & n \bigg( n-1+\frac{1}{2n} + \frac{11}{10n^2} \bigg) - \Bigg( 1+1-\frac{1}{2n} - \frac{11}{10n^2} + \frac{(1-\frac{1}{2n}-\frac{11}{10n^2})^2}{2(\frac{1}{2n} + \frac{11}{10n^2})} \Bigg)^2
    \\
    & = \frac{14000 n^7 - 39600 n^6 - 32400 n^5 + 80775 n^4 + 120340 n^3 + 35090 n^2 - 26620 n - 14641}{400 n^4 (5 n + 11)^2},
\end{align*}
where the seventh derivative in $n$ of the numerator is clearly positive at any value $n \geq 2$.
From this, it suffices to evaluate all derivatives of the numerator from order $6$ down to $0$ at the value $n = 2$ to verify that all of them are positive at any value $n \geq 2$.
In particular, the numerator itself, and thus the entire quotient, is positive for all $n \geq 2$.
In contrast, $n (n - \varepsilon)$ is smaller than $(1+\varepsilon+\frac{\varepsilon^2}{2(1-\varepsilon)})^2$ for $\varepsilon = 1 - \frac{1}{2n}$:
\[
    n \bigg( n-1+\frac{1}{2n} \bigg) - \Bigg( 1+1-\frac{1}{2n} + \frac{(1-\frac{1}{2n})^2}{2 \frac{1}{2n}} \Bigg)^2
    = \frac{n (8 - 48 n^2) - 1}{16 n^2},
\]
which is clearly negative for all $n \geq 2$.
In conclusion,
\[
    d_{BM}(K,L)
    \leq b(n)
    := n (n - \varepsilon(n))
    \in \bigg( n^2 - n + \frac{1}{2}, n^2 - n + \frac{1}{2} + \frac{11}{10n} \bigg).
\]

If we equal both terms appearing within the minimum in \eqref{eq:minDist} and solve it with computer aid, we obtain accurate values of $\varepsilon(n)$ and $b(n)$, which in turn are kept for $3 \leq n \leq 8$ within Table~\ref{tab:bn}.
The value used in the table for $n = 2$ is taken from \cite{2018:BrodiukPalkoPrymak}.
\end{proof}

\section*{Acknowledgements}

\noindent This research has been funded by the PID2022-136320NB-I00 project / AEI/10.13039/501100011033/ FEDER, UE.

\noindent Furthermore, we would like to thank Alexander Litvak for pointing out a construction of his own that lead to the stated sharpness in Theorem~\ref{thm:AsymmetryDistance}, more precisely to Remark~\ref{rem:Litvak}.

\printbibliography

\bigskip

René Brandenberg --
Technical University of Munich, Department of Mathematics, Germany. \\
\textbf{rene.brandenberg@tum.de}

Bernardo Gonz\'alez Merino --
Universidad de Murcia, Departmento de Ingenier\'ia y Tecnolog\'ia de Computadores, Spain. \\
\textbf{bgmerino@um.es}

Florian Grundbacher --
Technical University of Munich, Department of Mathematics, Germany. \\
\textbf{florian.grundbacher@tum.de}

\vfill\eject

\end{document}